\newcommand{\Ch}        {\operatorname{Ch}}
\newcommand{\Hom}       {\operatorname{Hom}}
\newcommand{\Map}       {\operatorname{Map}}
\newcommand{\res}	{\operatorname{res}}
\newcommand{\spec}      {\operatorname{spec}}
\newcommand{\sSet}	{\operatorname{sSet}}
\newcommand{\img}       {\operatorname{image}}
\newcommand{\sgn}	{\operatorname{sgn}}
\newcommand{\N}         {{\mathbb{N}}}
\newcommand{\Z}         {{\mathbb{Z}}}
\newcommand{\Q}         {{\mathbb{Q}}}
\newcommand{\R}         {{\mathbb{R}}}
\newcommand{\K}	        {{\mathbb{K}}}
\newcommand{\al}        {\alpha}
\newcommand{\bt}        {\beta} 
\newcommand{\gm}        {\gamma}
\newcommand{\dl}        {\delta}
\newcommand{\ep}        {\epsilon}
\newcommand{\zt}        {\zeta}
\newcommand{\tht}       {\theta}
\newcommand{\lm}        {\lambda}
\newcommand{\sg}        {\sigma}
\newcommand{\up}        {\upsilon}
\newcommand{\om}        {\omega}
\newcommand{\Dl}        {\Delta}
\newcommand{\Sg}        {\Sigma}
\newcommand{\Lm}        {\Lambda}
\newcommand{\Om}        {\Omega}
\newcommand{\Tht}       {\Theta}
\newcommand{\Sgi}       {\Sigma^\infty}
\newcommand{\Smash}     {\wedge}
\newcommand{\bigSmash}  {\bigwedge}
\newcommand{\colim}  {\operatornamewithlimits{\underset{\longrightarrow}{lim}}}
\newcommand{\ip}[1]     {\langle #1\rangle}
\newcommand{\iip}[1]    {\boldsymbol{(}#1\boldsymbol{)}}
\newcommand{\ot}        {\otimes}
\newcommand{\op}        {\oplus}
\newcommand{\sm}        {\setminus}
\newcommand{\sse}       {\subseteq}
\newcommand{\st}        {\;|\;}
\newcommand{\tm}        {\times}
\newcommand{\DDl}	{\mathbf{\Delta}}
\newcommand{\tN}	{\widetilde{N}}
\newcommand{\tC}	{\widetilde{C}}
\newcommand{\tH}        {\widetilde{H}}
\newcommand{\tP}	{\widetilde{P}}
\newcommand{\tW}	{\widetilde{W}}
\newcommand{\hPhi}	{\widehat{\Phi}}
\newcommand{\ttht}      {\widetilde{\theta}}
\newcommand{\un}[1]     {\underline{#1}}
\newcommand{\ov}[1]     {\overline{#1}}
\newcommand{\CV}        {{\mathcal{V}}}
\newcommand{\xla}       {\xleftarrow}
\newcommand{\xra}       {\xrightarrow}
\newcommand{\hx}	{\widehat{x}}
\newcommand{\hy}	{\widehat{y}}
\newcommand{\EE}        {{\mathbb{E}}}
\newcommand{\half}      {{\textstyle\frac{1}{2}}}
\newcommand{\opp}	{\text{op}}
\newcommand{\im}        {\vdash}
\newcommand{\CA}        {{\mathcal{A}}}
\newcommand{\CS}        {{\mathcal{S}}}
\newcommand{\CX}        {{\mathcal{X}}}
\newcommand{\ND}	{\operatorname{ND}}
\renewcommand{\:}{\colon}
\newtheorem{theorem}{Theorem}[section]
\newtheorem{lemma}[theorem]{Lemma}
\newtheorem{proposition}[theorem]{Proposition}
\newtheorem{corollary}[theorem]{Corollary}
\theoremstyle{definition}
\newtheorem{remark}[theorem]{Remark}
\newtheorem{definition}[theorem]{Definition}
\newtheorem{construction}[theorem]{Construction}
\begin{document}
\title{Chains on suspension spectra}
\author{N.~P.~Strickland}

\maketitle 

\begin{abstract}
 We define and study a homological version of Sullivan's rational de
 Rham complex for simplicial sets.  This new functor can be
 generalised to simplicial symmetric spectra and in that context it
 has excellent categorical properties which promise to make a number
 of interesting applications much more straightforward.
\end{abstract}

\section{Introduction}\label{sec-intro}

In this paper we will define and study a functor $\Phi$ from
simplicial sets to rational chain complexes, with the property that
$H_*(\Phi_*(X))$ is just the ordinary rational homology of $X$.

Some background is needed to understand why this functor deserves
attention.  There is a much simpler functor called $N_*$ (normalised
simplicial chains) from simplicial sets to integral chain complexes
that computes integral homology, and one can just tensor with $\Q$ to
compute rational homology.  There is a dual complex $N^*$ that
calculates integral cohomology.  This is equipped with a natural
product $N^*(X)\ot N^*(X)\to N^*(X)$ which is commutative up to
homotopy but not on the nose.  The theory of Steenrod operations shows
that if we work integrally then neither $N^*(X)$ nor any reasonable
replacement can be given a strictly commutative product (even with the
usual signs).  Rationally, however, the situation is better:
in~\cite{su:ict} Sullivan developed a rational and simplicial version
of de Rham theory giving a cochain complex $\Om^*(X)$ with a strictly
commutative product that computes the ordinary rational cohomology of
$X$.  This can be used as a starting point for the rich and powerful
theory of rational homotopy (originally introduced by
Quillen~\cite{qu:rht} using slightly different machinery).  One can
then stabilise and consider the category $\CS_\Q$ of rational spectra,
which makes things considerably simpler: it is well-known that the
homotopy category of $\CS_\Q$ is equivalent to the category of graded
rational vector spaces.  However, we can make things harder again by
considering rational spectra with a ring structure or a group action.
To handle these, we need to improve the homotopy classification of
rational spectra to some kind of monoidal Quillen equivalence of
$\CS_\Q$ with a suitable model category $\Ch_\Q$ of rational chain
complexes.

Work of this type has been done especially by Greenlees, Shipley and
Barnes, leading to very concrete and interesting descriptions of the
homotopy theory of $G$-spectra for various compact Lie groups $G$,
among other things.  However, some of the arguments involved are more
awkward than one might like, because they do not have a single
symmetric monoidal Quillen functor $\Psi_*\:\CS_\Q\to\Ch_\Q$, but a
zig-zag of Quillen functors whose monoidal properties fit together in
an inconvenient way.

Recently, the author and Stefan Schwede independently discovered a
functor $\Psi_*$ as above, which promises to simplify many
applications such as those of Greenlees \emph{et al}.  This will be
explained in a separate paper by Schwede and the present author.  It
is then natural to ask for a calculation of $\Psi_*(T)$ for various
popular spectra $T$, including suspension spectra.  One of the most
intriguing aspects of the story is that the complex
$\Phi_*(X)=\Psi_*(\Sgi X_+)$ has a very natural description in terms
of simplicial de Rham theory, although nothing of that kind is visible
in the definition.  In particular, we obtain a chain complex similar
in spirit to $\Om^*(X)$ that computes $H_*(X;\Q)$ rather than
$H^*(X;\Q)$; this cannot reasonably be done by naive dualisation, as
$\Om^*(X)$ is infinite-dimensional (even when $X$ is finite) and has
no natural topology.  This forms the main subject of the present
paper.

It will be convenient for us to work in a slightly different order
from that suggested by the above discussion.  We will give a
definition of $\Phi_*(X)$ that does use de Rham theory, and
investigate the properties of $\Phi$ using that definition.
Eventually, in Theorem~\ref{thm-Phi-colim} we will obtain a
description of $\Phi_*(X)$ as a colimit of groups that do not involve
differential forms.  When we have defined $\Psi$ (in a separate paper)
it will be clear from that description that
$\Psi_*(\Sgi X_+)=\Phi_*(X)$.

Appendix~\ref{apx-simplicial} contains some recollections and
notational conventions about the simplicial category (especially the
theory of shuffles) which will be in place throughout the paper.
Appendix~\ref{apx-int} contains formulae for integrals of polynomials
over simplices.  These are surely standard, but we do not know a
convenient source.

\section{de Rham chains}\label{sec-de-rham}

Let $\K$ be a field of characteristic zero.  Some of our constructions
will seem most natural for $\K=\Q$ and others for $\K=\R$, but in fact
everything works for any $\K$.

Given a finite set $I$, we put 
\begin{align*}
 \tP_I &= \K[t_i\st i\in I] \\
 P_I &= \tP_I/(1-\sum_it_i),
\end{align*}
so $P_I$ is the ring of polynomial functions on an algebraic simplex
$\Dl^{\text{alg}}_I=\spec(P_I)$ of dimension $|I|-1$.  We also put
\begin{align*}
 W_I &= \K\{dt_i\st i\in I\}/(\sum_i dt_i) \\
 \Om^1_I &= P_I\ot_\K W_I = P_I\{dt_i\st i\in I\}/(\sum_i dt_i) \\
 \Om^*_I &= P_I\ot_\K\Lm^*(W_I) = \Lm^*_{P_I}(\Om^1_I).
\end{align*}
Here $\Om^*_I$ is graded with $|t_i|=0$ and $|dt_i|=1$, and we give
$\Om^*_I$ the standard de Rham differential, making it a differential
graded algebra.  All of these constructions are contravariantly
functorial in $I$: a map $\al\:I\to J$ of finite sets gives a ring map
$\al^*\:P_J\to P_I$ with $\al^*(t_j)=\sum_{\al(i)=j}t_i$, and this
extends naturally to a map $\al^*\:\Om^*_J\to\Om^*_I$.  If $\al$ is
just the inclusion of a subset, we write $\res^J_I$ for $\al^*$.

In particular, the assignment $n\mapsto\Om^*_{[n]}$ is a simplicial
object in the category of DGA's, so for any simplicial set $X$ we can
define 
\[ \Om^k(X) = \sSet(X,\Om^k_\bullet) \]
and this gives us a differential graded algebra $\Om^*(X)$.  It is
well-known that $H^*\Om^*(X)$ is the usual cohomology $H^*(X;\K)$.

We would like a version of this construction that is well-related to
homology rather than cohomology.  The most obvious approach is to
dualise and put 
\[ \hPhi_{I,k} = \Hom_\K(\Om^k_I,\K), \] 
giving a chain complex that is covariantly functorial
in $I$.  However, this is inconvenient because $\hPhi^k_I$ is most
naturally a product (rather than direct sum) of countably many copies
of $\Q$, which introduces numerous technical complications.  We will
therefore use a smaller subcomplex $\Phi_{I,*}\leq\hPhi_{I,*}$.

\begin{definition}\label{defn-Phi}
 We define
 \begin{align*}
  W_I^\vee &= \Hom_\K(W_I,\K) \\
  \Tht_{I,m} &= P_I\ot\Lm^m(W_I^\vee) = \Lm^m_{P_I}(P_I\ot W_I^\vee) \\
  \Phi_{I,m} &= \bigoplus_{\emptyset\neq J\sse I} \Tht_{J,m}.
 \end{align*}
 We write $i_J$ for the inclusion $\Tht_{J,m}\to\Phi_{I,m}$.  We will
 occasionally use a bigrading on $\Phi_{I,*}$: we put
 \[ \Phi_{I,(p,q)} = \bigoplus_{|J|=p} \Tht_{J,p+q} \]
 so that $\Phi_{I,m}=\bigoplus_{p+q=m}\Phi_{I,(p,q)}$.
\end{definition}

We want to interpret $\Phi_{I,*}$ as a subcomplex of $\hPhi_{I,*}$,
and for this we need to define various bilinear pairings.
First, we define a pairing of $\Lm^m(W^\vee_I)$ with $\Lm^m(W_I)$ by 
the formula
\[ \ip{\al_1\wedge\dotsb\wedge \al_m,\om_1\wedge\dotsb\wedge \om_m}_I
     = (-1)^{m(m-1)/2}\det(\ip{\al_i,\om_j})_{i,j=1}^m.
\]
This is a perfect pairing, and we will silently use it to identify
$\Lm^m(W_I^\vee)$ with $\Lm^m(W_I)^\vee$.  Next, we can extend this
linearly over $P_I$ to get a pairing 
\[ \ip{\cdot,\cdot}_I \: \Tht_{I,m}\ot \Om_I^m\to P_I \]
given by essentially the same formula.  Occasionally we will use the
convention $\ip{\al,\om}=0$ if $\al\in\Tht_{I,m}$ and $\om\in\Om_I^p$
with $p\neq m$.
\begin{remark}\label{rem-sign}
 The factor $(-1)^{m(m-1)/2}$ is inserted to ensure that the term
 $\prod_i\ip{\al_i,\om_i}$ in the determinant comes with the standard
 sign for converting the term
 \[ \al_1\ot\dotsb\ot\al_m\ot\om_1\ot\dotsb\ot\om_m \]
 to the term 
 \[ \al_1\ot\om_1\ot\al_2\ot\om_2\ot\dotsb\ot\al_m\ot\om_m. \]
 In other words, if we defined the pairing by a diagram in the usual
 notation of symmetric monoidal categories, then the sign would come
 from the twist maps and so would not need to be inserted explicitly.
\end{remark}

We really want a pairing with values in $\K$ rather than $P_I$, and
for this we need to integrate.  
\begin{definition}\label{defn-int}
 Given a monomial $t^\nu=\prod_{i\in I}t_i^{\nu_i}$, we put $n=|I|-1$
 and define
 \[ \int_I t^\nu = 
     \left(\prod_i \nu_i!\right)/(n+\sum_i\nu_i)! \in\K.
 \]
 This extends to a linear map $\int_I\:\tP_I\to\K$, and one can check
 (see Lemma~\ref{lem-int-well-defined}) that it factors through the
 quotient $P_I=\tP_I/(1-\sum_it_i)$.  It is often convenient to use
 the notation $\nu!=\prod_k(\nu_k!)$ and $t^{[\nu]}=t^\nu/\nu!$ and
 $|\nu|=\sum_i\nu_i$, so that $\int_It^{[\nu]}=1/(n+|\nu|)!$.
\end{definition}
\begin{remark}\label{rem-int}
 One can also check (see Lemma~\ref{lem-int-real}) that in the case
 $\K=\R$, the map $\int_I\:P_I\to\R$ is just integration over the
 simplex $\Dl_I$ with respect to a natural measure.
\end{remark}
\begin{remark}\label{rem-int-forms}
 There is a theory of integration for functions on a space with a
 measure, and also a theory of integration for differential forms on a
 manifold with orientation.  In discussing de Rham cohomology it is
 more usual to use integration of forms, but in our application it is
 painful to keep track of the orientations, so we have chosen to
 reformulate everything in terms of integration of functions.
\end{remark}

\begin{definition}\label{defn-xi}
 We define a pairing $\iip{\cdot,\cdot}\:\Phi_{I,m}\ot\Om_I^m\to\K$
 by $\iip{i_J(\al),\om}=\int_J\ip{\al,\res^I_J(\om)}$.  In particular,
 for $\al\in\Tht_{I,m}\leq\Phi_{I,m}$ we just have
 $\iip{\al,\om}=\int_I\ip{\al,\om}$.  We let
 $\xi\:\Phi_{I,m}\to\hPhi_{I,m}$ be adjoint to $\iip{\cdot,\cdot}$.
\end{definition}

Our main results about $\Phi$ are summarised below; proofs will be
given in the subsequent sections of the paper.
\begin{theorem}\label{thm-Phi}
 \begin{itemize}
  \item[(a)] The map $\xi_I$ is injective, and the image (which we
   will identify with $\Phi_{I,*}$) is a subcomplex of $\hPhi_{I,*}$.
  \item[(b)] $\Phi_{I,*}$ is a covariant functor of $I$, and the maps
   $\al_*\:\Phi_{I,*}\to\Phi_{J,*}$ are quasiisomorphisms.
  \item[(c)] For the singleton $1=\{0\}$ we have $\Phi_{1,*}=\Q$
   (concentrated in degree zero). \qed
 \end{itemize}
\end{theorem}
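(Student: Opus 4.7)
Part (c) is immediate from the definitions: for $I=\{0\}$, $P_I\cong\K$ and $W_I=0$, so $\Tht_{I,0}=\K$, $\Tht_{I,m}=0$ for $m>0$, and the only nonempty subset of $I$ is $I$ itself.

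For the injectivity half of (a), suppose $\phi=\sum_{\emptyset\neq J\sse I}i_J(\bt_J)$ satisfies $\xi(\phi)=0$ with some $\bt_J\ne 0$. Fix a containment-maximal $J_0$ with $\bt_{J_0}\ne 0$; for $\om'\in\Om^m_I$ and $N$ large, take $\om=(\prod_{k\in J_0}t_k^N)\,\om'$. Because $\res^I_J(t_k)=0$ whenever $k\notin J$, we have $\res^I_J(\om)=0$ unless $J_0\sse J$, and maximality kills every surviving summand except $J=J_0$:
\[ 0 = \iip{\phi,\om} = \int_{J_0}\ip{(\textstyle\prod_{k\in J_0}t_k^N)\bt_{J_0},\,\res^I_{J_0}\om'}. \]
Since $\res^I_{J_0}$ surjects onto $\Om^m_{J_0}$ and multiplication by $\prod t_k^N$ is injective on the torsion-free $P_{J_0}$-module $\Tht_{J_0,m}$, the task reduces to: for $\gm\in\Tht_{J_0,m}$, $\int_{J_0}\ip{\gm,\eta}=0$ for all $\eta\in\Om^m_{J_0}$ implies $\gm=0$. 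By $P_{J_0}$-bilinearity and a chosen basis for $\Lm^m(W_{J_0}^\vee)$, this reduces in turn to non-degeneracy of $(f,g)\mapsto\int_{J_0}fg$ on $P_{J_0}$, which holds by base change from the positive-definite real pairing of Remark~\ref{rem-int}.

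For the subcomplex half of (a), the differential on $\hPhi_{I,*}$ is adjoint to the de Rham $d$; by the injectivity already proved it suffices to construct a degree $-1$ operator $\partial$ on $\Phi_{I,*}$ with $\iip{\partial\phi,\om}=\iip{\phi,d\om}$. The natural tool is Stokes' theorem for polynomial forms on the algebraic simplex: integration by parts writes $\int_J\ip{\bt,d\om}$ as an interior divergence term $\int_J\ip{\partial^{\mathrm{div}}\bt,\om}$ plus signed codimension-one boundary contributions $\sum_{K\subsetneq J,\,|K|=|J|-1}\pm\int_K\ip{\partial_K\bt,\,\res^J_K\om}$. Setting $\partial(i_J\bt)=i_J(\partial^{\mathrm{div}}\bt)+\sum_K\pm i_K(\partial_K\bt)$ produces the desired operator, and the image lies in $\Phi_{I,m-1}$ because each $K\subsetneq J\sse I$ is itself a nonempty subset of $I$.

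For (b), a map $\al:I\to J$ yields $\al_*$ on $\hPhi$ by duality. That $\al_*$ preserves $\Phi$ follows from $\res^I_K\circ\al^*=(\al|_K)^*=\bar\al^*\circ\res^J_{\al(K)}$, where $\bar\al:K\twoheadrightarrow\al(K)$; a fibre-integration operator $\bar\al_*:\Tht_{K,m}\to\Tht_{\al(K),m}$ adjoint to $\bar\al^*$ along the integration pairings then yields $\al_*\circ i_K = i_{\al(K)}\circ\bar\al_*$, with functoriality visible from the construction. For the quasi-isomorphism assertion, any $\al:I\to J$ fits into $p_J\al=p_I$ where $p_{(-)}$ is the unique map to the singleton, so by two-out-of-three it suffices to show $p_{I,*}:\Phi_{I,*}\to\Phi_{1,*}=\K$ is a quasi-isomorphism for each nonempty $I$; I would prove this via an explicit contracting chain homotopy on the augmentation kernel, built from a chosen vertex $v\in I$ by a coning construction combined with the standard contraction of the de Rham complex of a simplex, or alternatively by a spectral-sequence argument on the bigrading $\Phi_{I,(p,q)}$. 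The principal obstacle is the sign-correct construction of $\partial$ in (a): everything else is downstream of it, though the fibre-integration maps $\bar\al_*$ and the contracting homotopy each call for their own explicit formulas involving the simplex integrals of Appendix~\ref{apx-int}.
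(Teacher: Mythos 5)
Your plan follows the paper's proof essentially step for step: injectivity of $\xi_I$ via a maximal $J$ and positivity of the real integral (the paper tests against $fg\om$ with $g=\prod_{j\in J}t_j$ and concludes directly from $\int_J f_0^2\res^I_J(g)>0$, rather than reducing to non-degeneracy of $(f,g)\mapsto\int_J fg$, but this is the same idea); the subcomplex claim via an explicit $\dl=\dl'+\dl''$ built from a Stokes-type identity (Lemma~\ref{lem-grad}); functoriality via fibre-integration maps $\sg_*$ adjoint to $\sg^*$ under the integration pairing (Definition~\ref{defn-int-fibre}, Lemma~\ref{lem-star-adjoint}); and the quasi-isomorphism statement by reduction to the homology of $\Phi_{I,*}$ itself, which the paper computes by exactly the double-complex spectral sequence on the bigrading $\Phi_{I,(p,q)}$ that you name as your alternative. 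The deferred sign-correct constructions you flag are indeed where the paper's labour lies (the formulas of Definition~\ref{defn-delta}, the combinatorial proof of Lemma~\ref{lem-star-adjoint}, and Proposition~\ref{prop-H-Phi}), but your outline is sound and matches the paper's route.
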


\begin{definition}\label{defn-PhiX}
 If $X$ is a simplicial set, we let $\Phi_*(X)$ be the coend of the
 functor $\DDl^{\opp}\tm\DDl\to\Ch_\K$ given by
 $(n,m)\mapsto\Z[X_n]\ot\Phi_{[m],*}$.
\end{definition}

\begin{theorem}\label{thm-PhiX}
 $\Phi$ is a lax symmetric monoidal functor from spaces to chain
 complexes, with a natural isomorphism $H_*\Phi_*(X)=H_*(X;\K)$.
 There is a natural $\K$-linear isomorphism
 \[ \Phi_d(X) = \bigoplus_k N_k(X) \ot \Tht_{[k],d}, \]
 where $N_*(X)$ is the group of normalised chains on $X$. \qed
\end{theorem}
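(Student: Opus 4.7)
I would start here, since it is the structural statement used in the other parts. From the coend formula and $\Phi_{[n],*} = \bigoplus_{\emptyset \neq J \sse [n]} \Tht_{J,*}$, each subset $J$ of size $k+1$ is the image of a unique order-preserving injection $\iota_J \: [k] \mra [n]$, and the bijection $[k] \cong J$ identifies $\Tht_{J,*}$ canonically with $\Tht_{[k],*}$. Viewed as a cosimplicial chain complex, $[n] \mapsto \Phi_{[n],*}$ has latching object $L_n\Phi = \bigoplus_{J \subsetneq [n]} \Tht_{J,*}$ (the image of cofaces from proper subsets) and normalised quotient $\bar\Phi_{[n],*} = \Tht_{[n],*}$. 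Reedy normalisation for cosimplicial chain complexes in $\Ch_\K$ then yields a natural $\K$-linear isomorphism
\[ \Phi_*(X) \;\cong\; \bigoplus_k N_k(X) \ot \Tht_{[k],*}. \]
One should note that the naive assignment $x \ot \theta \mapsto [x \ot i_{[k]}(\theta)]_k$ is \emph{not} by itself natural in $X$: a degeneracy $s \: [k] \to [k-1]$ can carry $i_{[k]}(\theta) \in \Tht_{[k],*}$ into lower summands of $\Phi_{[k-1],*}$ (in general $\iip{s_*i_{[k]}(\theta),-} = \int_{[k]}\ip{\theta,s^*(-)}$ does not vanish), so the Reedy machinery has to supply correction terms coming from the inverse of the latching inclusion.

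\textbf{The homology identification.} Given the decomposition, construct a natural chain map $\Phi_*(X) \to N_*(X;\K)$ by integrating the top-form component: via the one-dimensional $\Lm^k(W^\vee_{[k]})$ write $\Tht_{[k],k} \cong P_{[k]}$, and on $N_k(X) \ot \Tht_{[k],k}$ send $x \ot p \mapsto (\int_{[k]}p)\, x$, while the $d < k$ summands of $N_k(X) \ot \Tht_{[k],d}$ go to zero. A Stokes-type identity for the $\int_J$ makes this a chain map. For $X = \Dl^n$ the source is $\Phi_{[n],*}$, which by Theorem~\ref{thm-Phi}(b,c) is acyclic with $H_0 = \K$, and the target $N_*(\Dl^n;\K)$ has the same homology, so the map is a quasi-isomorphism on representables. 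Both functors preserve colimits in $X$, so an acyclic-models comparison extends this to every simplicial set.

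\textbf{The lax symmetric monoidal structure.} Using the decomposition, define $\Phi_*(X) \ot_\K \Phi_*(Y) \to \Phi_*(X \tm Y)$ as the tensor of the Eilenberg--Zilber shuffle $N_*(X) \ot N_*(Y) \to N_*(X \tm Y)$ with a natural product $\Tht_{[k],*} \ot \Tht_{[l],*} \to \Tht_{[k+l],*}$, the latter obtained from the $(k,l)$-shuffle decomposition of the product simplex $\Dl^{\text{alg}}_{[k]} \tm \Dl^{\text{alg}}_{[l]}$ and dualised via the $\iip{\cdot,\cdot}$ pairing (Appendix~\ref{apx-simplicial}). Associativity, symmetry and unit reduce to standard shuffle identities and the commutativity of the de Rham product, with $\Phi_*(*) = \K$ from Theorem~\ref{thm-Phi}(c) handling the unit. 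I expect the decomposition step to be the main obstacle: producing a genuinely natural iso (not merely a pointwise iso of vector spaces) demands careful Reedy bookkeeping, as the summand $\Tht_{[k],*} \sse \Phi_{[k],*}$ is covariantly functorial only under injections; once that is settled the monoidal structure is a combinatorial shuffle exercise and the homology identification is an acyclic-models comparison.
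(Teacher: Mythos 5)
Your overall architecture matches the paper's, but the central step --- the natural splitting $\Phi_d(X)\cong\bigoplus_kN_k(X)\ot\Tht_{[k],d}$ --- is not delivered by the tool you invoke. Computing the latching objects $L^n\Phi=\bigoplus_{J\subsetneq[n]}\Tht_{J,*}$ uses only the cofaces, and what it buys you is the skeletal filtration of $\Phi_*(X)$ with associated graded $\bigoplus_kN_k(X)\ot(\Phi_{[k],*}/L^k\Phi)$; there is no general ``Reedy normalisation'' theorem that splits this filtration naturally. (The canonical dual Dold--Kan splitting of a cosimplicial vector space is indexed by surjections $[n]\twoheadrightarrow[k]$ and uses the conormalisation $\bigcap_j\ker(s^j_*)$, which is \emph{not} the summand $\Tht_{[n],*}$ --- already for $n=1$ the summand counts disagree, $2$ versus $3$.) What actually makes the filtration split is the codegeneracy structure: as a functor on $\DDl$, $\Phi$ is the left Kan extension of $\Tht\:\EE\to\CV_*$ along $\EE\hookrightarrow\DDl$. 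Note that a surjection $\sg\:[k]\to[k']$ carries the top summand $\Tht_{[k],*}$ into the top summand $\Tht_{[k'],*}$ via $\sg_*$, so your remark that $\Tht_{[k],*}$ is covariantly functorial ``only under injections'' is backwards --- it is precisely the functoriality under surjections that matters. Combining the Kan-extension property with the Eilenberg--Zilber bijection $\coprod_m\EE(n,m)\tm\ND_m(X)\cong X_n$ (Lemma~\ref{lem-degen-split}) and Yoneda is how the paper obtains the splitting in Proposition~\ref{prop-Phi-split}; some argument of this type, using the surjections and not just the latching maps, is unavoidable.

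The other two parts are plausible sketches but lean on the above. For the homology statement the paper goes in the opposite direction, $N_*(X)\to\Phi_*(X)$, $x\mapsto(-1)^nx\ot\tht_{[n]}$, whose chain-map property is immediate from $\dl\tht_{[n]}=-\sum_j(-1)^j(\dl_j)_*\tht_{[n-1]}$ (Lemma~\ref{lem-dl-tht}); your dual ``integrate the top component'' map requires its own Stokes verification, which is nontrivial because the decomposition is only an isomorphism of graded vector spaces, not of complexes. Moreover your ``acyclic models'' extension is really the skeletal induction, whose key input --- short-exactness of the bottom row of the Mayer--Vietoris ladder for $\Phi$ --- is again the splitting. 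For the monoidal structure, the product is \emph{not} of the form (Eilenberg--Zilber shuffle)\,$\ot$\,(a single fixed map $\Tht_{[k],*}\ot\Tht_{[l],*}\to\Tht_{[k+l],*}$): each shuffle $(\zt,\xi)$ must be paired with its own isomorphism $\mu_{\zt\xi}$ as in Definition~\ref{defn-mult-Phi}, and the hardest point --- that $\mu$ is a chain map, which the paper proves by the diagonal/horizontal/vertical gap analysis culminating in Proposition~\ref{prop-mu-chain} --- is not covered by ``standard shuffle identities.''
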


\begin{theorem}\label{thm-Phi-colim}
 There is a natural isomorphism 
 \[ \Phi_*(X) = \colim_A\Hom(\tH_*(S^A),\tN_*(S^A\Smash X_+)), \]
 where $A$ runs over the category of finite sets and injective
 maps. \qed 
\end{theorem}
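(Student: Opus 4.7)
The plan is to construct an explicit isomorphism, using the basis decomposition $\Phi_d(X) = \bigoplus_k N_k(X) \otimes \Tht_{[k],d}$ from Theorem~\ref{thm-PhiX}. First I would analyze the target. Since $S^A = (S^1)^{\Smash A}$ has nondegenerate simplices only in degree $0$ (the basepoint) and in degree $|A|$, with exactly $|A|!$ of the latter (indexed by linear orderings of $A$), the reduced homology $\tH_*(S^A)$ is one-dimensional concentrated in degree $|A|$, generated by a signed sum over the top nondegenerate simplices, and $\Sg_A$ acts by sign. In chain degree $d$ the Hom complex therefore reduces to $\tN_{|A|+d}(S^A\Smash X_+)\otimes\tH_{|A|}(S^A)^\vee$. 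By the classification of nondegenerate simplices of a product of simplicial sets, a basis element of $\tN_{|A|+d}(S^A\Smash X_+)$ is a quadruple $(\sigma_0,\tau_0,I,J)$ with $\sigma_0$ a top nondegenerate simplex of $S^A$, $\tau_0$ a nondegenerate $k$-simplex of $X$ with $k\geq d$, and $I,J\sse\{0,\dotsc,|A|+d-1\}$ disjoint subsets of sizes $d$ and $|A|+d-k$ respectively. The standard bijection between degeneracy subsets and surjections presents $J$ as a multi-index $\nu\colon[k]\to\N$ with $|\nu|=|A|+d-k$, to be matched with the monomial $t^\nu\in P_{[k]}$.

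Second, I would define the natural map on generators. Given $\tau_0\in N_k(X)$, a monomial $t^\nu\in P_{[k]}$, and an element $\om\in\Lm^d(W_{[k]}^\vee)$, choose $A$ of cardinality $|\nu|$ and form the element of $\Hom(\tH_*(S^A),\tN_*(S^A\Smash X_+))_d$ that pairs the canonical generator of $\tH_{|A|}(S^A)$ against the signed sum over orderings of $A$ of the simplices $(\sigma_0,\tau_0,I_\om,J_\nu)$, where $J_\nu$ encodes $\nu$ and $I_\om\sse\{0,\dotsc,|A|+d-1\}\setminus J_\nu$ encodes $\om$ via a chosen basis. I would then check that the resulting class in the colimit is independent of $|A|$: the structure map associated to an injection $A\hookrightarrow B$ is given by smashing with the canonical fundamental cycle of $S^{B\setminus A}$ and applying Eilenberg--Zilber, which on the polynomial side amounts to adjoining new variables at exponent zero, leaving $t^\nu$ and $\om$ unchanged.

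Third, I would verify that the map is a chain map (by comparing the de~Rham differential on $\Phi$ with the simplicial boundary via Stokes-type manipulations), natural in $X$, injective (by linear independence of the distinct nondegenerate simplices involved), and surjective (since every generator $(\sigma_0,\tau_0,I,J)$ of $\tN_{|A|+d}(S^A\Smash X_+)$ arises from some triple $(\tau_0,t^\nu,\om)$ after enlarging $A$ if necessary).

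The main obstacle is matching the quotient relations and signs. The relations $1=\sum_i t_i$ in $P_{[k]}$ and $\sum_i dt_i=0$ in $W_{[k]}$, imposed algebraically in the very definition of $\Phi$, must emerge on the colimit side as consequences of the sign action of $\Sg_A$ on the top simplices of $S^A$ together with the identifications coming from the various injections in the indexing category; and the sign $(-1)^{m(m-1)/2}$ from Definition~\ref{defn-xi} must be reproduced by the alternating sum in the canonical generator of $\tH_{|A|}(S^A)$. Verifying these identifications amounts to a combinatorial sign identity on shuffles, which is the most delicate step.
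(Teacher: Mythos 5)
Your overall strategy---matching the splitting $\Phi_d(X)=\bigoplus_kN_k(X)\ot\Tht_{[k],d}$ against an explicit simplex basis of $\tN_*(S^A\Smash X_+)$ and letting the relations of $P_{[k]}$ emerge from the colimit---is essentially the paper's, but your very first step contains an error that propagates. It is not true that $S^A$ has nondegenerate simplices only in degrees $0$ and $|A|$: the nondegenerate $p$-simplices of $S^A$ away from the basepoint correspond to \emph{surjections} $A\to\{1,\dotsc,p\}$, so they occur in every degree $1\leq p\leq|A|$ (for instance $S^1\Smash S^1$ has a nondegenerate $1$-simplex, the diagonal). Consequently your parametrisation of $\tN_{|A|+d}(S^A\Smash X_+)$ by quadruples $(\sigma_0,\tau_0,I,J)$ with $\sigma_0$ a \emph{top} simplex omits all generators whose $S^A$-component is a lower-dimensional nondegenerate simplex. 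These extra generators are exactly why each individual term $\Hom(\tH_*(S^A),\tN_*(S^A\Smash X_+))$ is strictly larger than $\Phi_*(X)$ (it is even nonzero in negative degrees when $X$ is a point), they do not ``arise from some triple $(\tau_0,t^\nu,\om)$'', and so your surjectivity argument fails for them. They die only in the colimit, for a reason your proposal never supplies: when two elements $a\neq a'$ of $A$ have the same jump in $[D]'$, the transposition exchanging them fixes the simplex but acts by $-1$ on the generator of $\tH_{|A|}(S^A)$, forcing the corresponding class in the colimit to equal its own negative. This sign argument (the paper's Lemma~\ref{lem-zt-psi}) is an essential ingredient, not a refinement, and the same issue undermines ``injectivity by linear independence'': injectivity into a filtered colimit that performs nontrivial identifications needs a retraction, not independence at a single stage.

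A second, smaller problem is your description of the structure map for $A\hookrightarrow B$ as ``adjoining new variables at exponent zero, leaving $t^\nu$ unchanged.'' Smashing with the fundamental cycle of $S^{B\sm A}$ and applying the shuffle product sends a single generator to a \emph{sum} over shuffles, which on the polynomial side is $\sum_i(\nu_i+1)\,t^{[\nu+\dl_i]}=t^{[\nu]}\cdot\sum_it_i$ (note that the divided powers $t^{[\nu]}=t^\nu/\nu!$, arising from fibrewise integration $\sg_*(1)=t^{[\nu]}$, are the correct normalisation here, not the plain monomial $t^\nu$). The relation $\sum_it_i=1$ is therefore not a sign identity to be matched afterwards; it is precisely the statement that this pushforward represents the same class in the colimit, and it is what makes the comparison map well defined in either direction (this is the paper's Lemma~\ref{lem-zt-factor}). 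You correctly flag this as the delicate step, but the mechanism as stated is not the one that occurs, and carrying it out is where the real work of the proof lies.
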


\section{The differential}

We next introduce a differential $\dl\:\Phi_{I,m+1}\to\Phi_{I,m}$.
This involves interior multiplication, which we now recall.
\begin{definition}
 Let $U$ be a finitely generated free module over a ring $R$, with
 dual $U^\vee=\Hom_R(U,R)$.  Given $u\in U$ and
 $a\in\Lm^{k+1}(U^\vee)$, we let $u\im a\in\Lm^k(U^\vee)$ denote the
 unique element such that
 \[ \ip{u\im a,v} = (-1)^{k+1}\ip{a,u\wedge v}
     \hspace{4em} \text{ for all } v\in\Lm^k(U)
 \]
 (using the standard pairings described in
 Section~\ref{sec-de-rham}). 
\end{definition}

\begin{lemma}\label{lem-int-mult}
 \begin{itemize}
  \item[(a)] If $a\in U^\vee=\Lm^1(U^\vee)$ we have
   $u\im a=-\ip{u,a}$.
  \item[(b)] If $a\in\Lm^p(U^\vee)$ and $b\in\Lm^q(U^\vee)$ then
   $u\im(a\wedge b)=(u\im a)\wedge b+(-1)^pa\wedge(u\im b)$.
  \item[(c)] If $u,v\in U$ and $a\in\Lm^k(U^\vee)$ then
   $u\im(v\im a)+v\im(u\im a)=0$.
  \item[(d)] If $a\in\Lm^{k+1}(U)$ then
   $u\im a\in\Lm^k((U/u)^\vee)\leq\Lm^k(U^\vee)$.  Moreover, there is
   a well-defined multiplication
   $u\wedge(\cdot)\:\Lm^k(U/u)\to\Lm^{k+1}(U)$ and in this context we
   again have $\ip{u\im a,v}=(-1)^{k+1}\ip{a,u\wedge v}$.
 \end{itemize}
\end{lemma}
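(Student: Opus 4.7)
The common tool throughout is that the pairing $\ip{\cdot,\cdot}$ on $\Lm^k(U^\vee)\ot\Lm^k(U)$ is perfect (since $U$ is finitely generated free), so elements of $\Lm^k(U^\vee)$ are determined by how they pair with $\Lm^k(U)$. Part (a) drops out immediately from the defining identity with $k=0$ and $v=1\in R=\Lm^0(U)$: the pairing $(r,s)\mapsto rs$ on $R\ot R$ gives $u\im a=-\ip{u,a}$.

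For (c) I would apply the defining identity twice. For $x\in\Lm^{k-2}(U)$,
\[ \ip{u\im(v\im a),x}=(-1)^{k-1}\ip{v\im a,u\wedge x}=(-1)^{k-1}(-1)^k\ip{a,v\wedge u\wedge x}=-\ip{a,v\wedge u\wedge x}, \]
and swapping $u,v$ and using $u\wedge v=-v\wedge u$ yields $\ip{v\im(u\im a),x}=+\ip{a,v\wedge u\wedge x}$, so the sum vanishes for every $x$ and (c) follows by perfection. For (b) I would fix a basis $e_1,\dotsc,e_n$ of $U$ with dual basis $e^1,\dotsc,e^n$ of $U^\vee$ and reduce by multilinearity to $u=e_i$ and $a,b$ monomial wedges of basis covectors; then $e_i\im$ simply deletes the factor $e^i$ (with the alternating-algebra sign) when present, and a deletion from $a\wedge b$ either hits $a$, contributing $(u\im a)\wedge b$, or hits $b$, contributing $(-1)^p a\wedge(u\im b)$, the sign accounting for sliding the derivation over the $p$ factors of $a$.

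For (d), first note that $u\im(u\im a)=0$: for any $x\in\Lm^{k-1}(U)$,
\[ \ip{u\im(u\im a),x}=(-1)^k\ip{u\im a,u\wedge x}=(-1)^k(-1)^{k+1}\ip{a,u\wedge u\wedge x}=0. \]
I would then identify $\Lm^k((U/u)^\vee)$ with $\{c\in\Lm^k(U^\vee):u\im c=0\}$ (standard for free modules, by picking a basis compatible with $u$), so that $u\im a\in\Lm^k((U/u)^\vee)$. The factorisation $u\wedge(\cdot)\:\Lm^k(U/u)\to\Lm^{k+1}(U)$ is well-defined because $u\wedge u=0$ forces $v\mapsto u\wedge v$ on $\Lm^k(U)$ to kill $u\wedge\Lm^{k-1}(U)$, which is the kernel of the projection $\Lm^k(U)\to\Lm^k(U/u)$. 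The pairing formula in (d) is then the original formula applied to any lift of $v$: changing the lift by an element of $u\wedge\Lm^{k-1}(U)$ fixes the left side by the first claim of (d) and fixes the right side because $u\wedge u=0$. The only real obstacle is the sign bookkeeping in (b), which the basis reduction tames but which I do not see how to avoid conceptually.
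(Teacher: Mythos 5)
Your proposal is correct, and it supplies exactly the ``standard multilinear algebra'' that the paper explicitly leaves to the reader: (a) and (c) from the defining identity plus perfectness of the pairing, (b) by reduction to basis monomials (equivalently, one can note that (a) and the sign bookkeeping show $u\im(\cdot)$ is $-1$ times the usual contraction, hence an antiderivation), and (d) via $u\im(u\im a)=0$ and independence of the choice of lift. The only point worth flagging is that the identifications in (d) (e.g.\ $\ker(u\im)=\Lm^k((U/u)^\vee)$ and $\ker(\Lm^k(U)\to\Lm^k(U/u))=u\wedge\Lm^{k-1}(U)$) require $u$ to extend to a basis of $U$, which holds in every instance where the paper uses (d) (namely $u=dt_j$ in the $\K$-vector space $W_J$).
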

\begin{proof}
 This is fairly standard multilinear algebra and is left to the
 reader. 
\end{proof}

\begin{definition}\label{defn-delta}
 Suppose we have $\emptyset\neq J\sse I$ and $f\in P_J$ and
 $\al_0\in\Lm^d(W_J^\vee)$, so $i_J(f\,\al_0)\in\Phi_{I,d}$.  Note
 that we have an interior product
 $\Om^1_J\ot_{P_J}\Tht_{J,d}\to\Tht_{J,d-1}$, so we can interpret
 $df\im\al_0$ as an element of $\Tht_{J,d-1}$.  Also, if $j\in J$ we can
 interpret $dt_j\im\al_0$ as an element of
 $\Lm^{d-1}((W_J/dt_j)^\vee)=\Lm^{d-1}(W^\vee_{J\sm\{j\}})$.  We can
 thus put 
 \begin{align*}
  \dl'(i_J(f\,\al_0)) &= -i_J(df\im\al_0) \\
   &= -\sum_{j\in J} i_J((\partial f/\partial t_j)\, dt_j\im\al_0) \\ 
  \dl''(i_J(f\,\al_0)) &= 
     - \sum_{j\in J}
        i_{J\sm\{j\}}(\res^J_{J\sm\{j\}}(f)\,dt_j\im\al_0) \\
  \dl(\al) &= \dl'(\al) + \dl''(\al).
 \end{align*}
 (Here the second description of $\dl'(i_J(f\al_0))$ relies on the
 choice of a lift of $f\in P_J$ to $\tP_J$, but the first description
 shows that the result is independent of the lift.)  This gives maps
 \[ \xymatrix{
     \Phi_{I,(p,q)} \rto^{\dl'} \dto_{\dl''} &
     \Phi_{I,(p,q-1)} \dto^{\dl''} \\
     \Phi_{I,(p-1,q)} \rto_{\dl'} &
     \Phi_{I,(p-1,q-1)}
    }
 \]
 and thus $\dl\:\Phi_{I,m}\to\Phi_{I,m-1}$.  We will show that the
 square above anticommutes.
\end{definition}

\begin{proposition}
 We have $\dl'\dl''+\dl''\dl'=0$ and $(\dl')^2=0$ and $(\dl'')^2=0$
 and $\dl^2=0$, so that $\Phi_{I,(*,*)}$ is a double complex.
\end{proposition}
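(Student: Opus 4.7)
The plan is to reduce each identity to a symmetry/antisymmetry cancellation by expanding $\dl'$ and $\dl''$ on a typical generator $i_J(f\,\al_0)\in\Phi_{I,m}$ with $f\in P_J$ lifted to some $\tilde f\in\tP_J$. For $(\dl')^2=0$, iterating Definition~\ref{defn-delta} gives
\[ (\dl')^2(i_J(f\,\al_0)) = \sum_{j,k\in J} i_J\bigl((\partial^2\tilde f/\partial t_k\partial t_j)\,dt_k\im dt_j\im\al_0\bigr), \]
which vanishes because the second partials are symmetric in $(j,k)$ while the iterated interior product is antisymmetric by Lemma~\ref{lem-int-mult}(c). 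The identity $(\dl'')^2=0$ is analogous: iterating yields a sum over ordered pairs $(k,l)\in J\tm J$ with $k\neq l$, whose summands pair the antisymmetric factor $dt_l\im dt_k\im\al_0$ against the doubly-restricted coefficient $\res^J_{J\sm\{k,l\}}(f)$ that depends only on the set $\{k,l\}$, so the $(k,l)$ and $(l,k)$ terms cancel.

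For the mixed identity $\dl'\dl''+\dl''\dl'=0$, expanding $\dl''\dl'$ produces
\[ \sum_{j,k\in J} i_{J\sm\{k\}}\bigl(\res^J_{J\sm\{k\}}(\partial\tilde f/\partial t_j)\,dt_k\im dt_j\im\al_0\bigr), \]
and the diagonal $j=k$ contributions vanish via $dt_k\im dt_k\im\al_0=0$. Expanding $\dl'\dl''$ gives
\[ \sum_{k\in J}\sum_{j\in J\sm\{k\}} i_{J\sm\{k\}}\bigl((\partial\res^J_{J\sm\{k\}}(\tilde f)/\partial t_j)\,dt_j\im dt_k\im\al_0\bigr). \]
Since $\res^J_{J\sm\{k\}}$ acts on $\tP_J$ simply by setting $t_k=0$, it commutes with $\partial/\partial t_j$ for $j\neq k$, which identifies the two polynomial coefficients; a single application of Lemma~\ref{lem-int-mult}(c) in the form $dt_j\im dt_k\im\al_0=-dt_k\im dt_j\im\al_0$ then produces the cancellation. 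The conclusion $\dl^2=(\dl')^2+\dl'\dl''+\dl''\dl'+(\dl'')^2=0$ is immediate.

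The main obstacle is the mixed identity, where three algebraic facts must be coordinated: the commutation of restriction and partial differentiation (valid only for variables not being set to zero), the antisymmetry of iterated interior products, and the vanishing of the diagonal terms. Once these are lined up the cancellation is mechanical, and the whole computation respects the lift-independence already built into Definition~\ref{defn-delta}, so no separate lift-independence check is required.
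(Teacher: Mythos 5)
Your proof is correct and follows exactly the route the paper sketches: expand everything using the second description of $\dl'$, then cancel via the symmetry of second partials, the antisymmetry $u\im(v\im a)+v\im(u\im a)=0$ from Lemma~\ref{lem-int-mult}(c), and the commutation of restriction with differentiation in the surviving variables. The paper's proof is a one-line appeal to precisely these three ingredients, so your write-up is just a fuller version of the same argument.
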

\begin{proof}
 The first three equations follow directly from the definitions, using
 the second description of $\dl'$, the commutation of partial
 derivatives and the rule $u\im(v\im a)+v\im(u\im a)=0$.  We can then
 expand out $(\dl'+\dl'')^2$ to see that $\dl^2=0$.
\end{proof}

\begin{proposition}\label{prop-adjoint}
 The map $\xi\:\Phi_{I,*}\to\hPhi_{I,*}$ is a chain map.
 Equivalently, for $\al\in\Phi_{I,d+1}$ and $\om\in\Om^d_I$ we have
 \[ \iip{\dl(\al),\om} = (-1)^{d+1}\iip{\al,d\om}. \]
\end{proposition}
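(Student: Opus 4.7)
The plan is to read the identity $\iip{\dl\al,\om} = (-1)^{d+1}\iip{\al,d\om}$ as an integration-by-parts relation in disguise: the $\dl'$ summand of $\dl$ will encode the Leibniz rule in the interior of the simplex, and $\dl''$ will encode Stokes' theorem on its boundary faces. By $\K$-bilinearity of the pairing I reduce to $\al = i_J(f\al_0)$ with $J\sse I$ nonempty, $f\in P_J$, and $\al_0\in\Lm^{d+1}(W_J^\vee)$; set $\om_J = \res^I_J(\om)$ and $\eta = f\om_J\in\Om^d_J$.

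For the $\dl'$ contribution, unfolding $\dl'\al = -i_J(df\im\al_0)$ and applying the interior-product identity $\ip{u\im a,v}=(-1)^{d+1}\ip{a,u\wedge v}$ ($P_J$-linearly, with $u = df$) gives $\iip{\dl'\al,\om} = (-1)^d\int_J\ip{\al_0,df\wedge\om_J}$. On the other side, $\iip{\al,d\om}=\int_J\ip{\al_0,f\,d\om_J}$, and the Leibniz rule $d\eta = df\wedge\om_J + f\,d\om_J$ combined with $P_J$-linearity of $\ip{\cdot,\cdot}$ rewrites this as $\int_J\ip{\al_0,d\eta} - \int_J\ip{\al_0,df\wedge\om_J}$. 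After multiplying by $(-1)^{d+1}$ and subtracting $\iip{\dl'\al,\om}$, the two $df\wedge\om_J$ terms cancel and the proposition reduces to
\[ \iip{\dl''\al,\om} = (-1)^{d+1}\int_J\ip{\al_0,d\eta}. \]

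For the $\dl''$ contribution, the same interior-product identity now with $u = dt_j$, using Lemma~\ref{lem-int-mult}(d) so the pairing makes sense modulo $dt_j$, rewrites the LHS as $(-1)^d\sum_{j\in J}\int_{J\sm\{j\}}\ip{\al_0,\,dt_j\wedge\res^J_{J\sm\{j\}}(\eta)}$. So everything reduces to the purely algebraic Stokes-type identity
\[ \int_J\ip{\al_0,d\eta} \;=\; -\sum_{j\in J}\int_{J\sm\{j\}}\ip{\al_0,\,dt_j\wedge\res^J_{J\sm\{j\}}(\eta)}. \]

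This final identity is the main obstacle. My plan is to verify it by $\K$-bilinear reduction to a monomial form $\eta = t^\nu\,dt_{k_1}\wedge\dotsb\wedge dt_{k_d}$ and a standard wedge $\al_0$, whereupon each side becomes an explicit finite combinatorial sum that can be matched using the integration formula of Definition~\ref{defn-int} and the supporting identities in Appendix~\ref{apx-int}. A cleaner alternative is to observe that both sides are $\Q$-polynomial in the coefficients of $\eta$ and $\al_0$, so by Remark~\ref{rem-int} it suffices to handle $\K=\R$; there, the identity follows from the classical Stokes theorem applied to an appropriate top form on $\Dl_J$ built from $\eta$, $\al_0$, and the natural volume form.
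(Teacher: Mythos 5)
Your reductions are correct, and the approach is essentially the paper's: the paper also reduces the proposition to a directly-verified Stokes-type lemma for polynomials on the simplex (Lemma~\ref{lem-grad}), proved by exactly the monomial computation you propose, with the needed cancellation coming from $\sum_j\ip{\al_0,dt_j\wedge\om_0}=0$. Indeed, your final identity specialised to $\eta=fg\,\om_0$ is literally Lemma~\ref{lem-grad} applied to the product $fg$, so the two arguments differ only in bookkeeping.
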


In order to prove this, we need a definition and a lemma.

\begin{definition}\label{defn-grad}
 For any vector $x\in \K^I$ we write $\nabla_x$ for the operator
 $\sum_ix_i\frac{\partial}{\partial t_i}$ on $\tP_I$.  We note that
 this induces an operation on $P_I=\tP_I/(1-\sum_it_i)$ iff
 $\sum_ix_i=0$.
\end{definition}

\begin{lemma}\label{lem-grad}
 For $f\in P_I$ and $\sum_ix_i=0$ we have 
 \[ \int_I \sum_i \nabla_x f +
     \sum_i x_i\int_{I\sm\{i\}} \res^I_{I\sm\{i\}} f = 0.
 \]
\end{lemma}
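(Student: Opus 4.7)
The plan is to reduce the identity to the case of a monomial by $\K$-linearity and then compute directly from the defining formula for $\int_I$. Both sides are $\K$-linear in $f \in P_I$; by Lemma~\ref{lem-int-well-defined} each integral descends to a well-defined function on the quotient $P_I$; and the hypothesis $\sum_i x_i = 0$ is exactly what is needed for $\nabla_x$ to descend to $P_I$. It therefore suffices to choose any lift of $f$ to $\tP_I$ and verify the identity when $f = t^\nu$ for a multi-index $\nu$.

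Write $n = |I| - 1$ and $c_\nu = \nu!/(n + |\nu| - 1)!$. For the gradient contribution, one expands
\[ \int_I \nabla_x t^\nu = \sum_{i : \nu_i \geq 1} x_i \nu_i \int_I t^{\nu - e_i}, \]
and the integral formula yields $\int_I t^{\nu - e_i} = c_\nu/\nu_i$, so each such $i$ contributes $c_\nu x_i$. For the restriction contribution, $\res^I_{I \sm \{i\}}(t^\nu)$ is the substitution $t_i \mapsto 0$, which vanishes when $\nu_i \geq 1$ and equals the corresponding monomial in $\tP_{I \sm \{i\}}$ when $\nu_i = 0$. In the latter case, applying the integral formula with $|I|$ reduced by one (compensated by the disappearance of $\nu_i! = 1$ from the numerator) gives $\int_{I \sm \{i\}} t^\nu = c_\nu$, so again each such $i$ contributes $c_\nu x_i$.

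The two sums therefore partition $I$ into $\{i : \nu_i \geq 1\}$ and $\{i : \nu_i = 0\}$, with each index contributing $c_\nu x_i$ to the combined total. The left-hand side of the claimed identity is thus $c_\nu \sum_i x_i$, which vanishes by hypothesis. There is no serious obstacle in the proof; the content of the lemma is the matching of the single constant $c_\nu$ between the derivative and restriction cases, which is precisely the combinatorial shadow of integration by parts (Stokes' theorem) on the standard simplex.
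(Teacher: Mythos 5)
Your proof is correct and follows essentially the same route as the paper's: reduce to a monomial, observe that the indices with $\nu_i\geq 1$ contribute only to the gradient term and those with $\nu_i=0$ only to the restriction term, and check that both contributions carry the same constant (your $c_\nu$ is the paper's $\ep$ up to the divided-power normalisation $t^{[\nu]}=t^\nu/\nu!$), so the total is $c_\nu\sum_i x_i=0$.
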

(This is a version of Stokes's Theorem, but it is easier to prove it
directly than to do the translation necessary to quote it from
elsewhere.) 
\begin{proof}
 It will suffice to prove this for a monomial $f=t^{[\nu]}$.  Put
 $\ep=1/(|\nu|+n-1)$ and $J=\{i\in I\st\nu_i>0\}$, and suppose that
 $i\in J$.  Let $\dl_i\:I\to\{0,1\}$ be the Kronecker delta, so
 $\partial f/\partial t_i=t^{[\nu-\dl_i]}$ and $|\nu-\dl_i|=|\nu|-1$.
 We then have $\int_Ix_i\partial f/\partial t_i=x_i\ep$, but
 $\res^I_{I\sm\{i\}}f=0$. 
 Suppose instead that $i\not\in J$.  Then $\partial f/\partial t_i=0$ but
 $\int_{I\sm\{i\}} \res^I_{I\sm\{i\}} f=
 \int_{I\sm\{i\}}t^{[\nu]}=\ep$.  Thus the first term in the claimed
 equation is $\sum_{i\in J}x_i\ep$, and the second term is
 $\sum_{i\not\in J}x_i\ep$, so altogether we have $\ep.\sum_Ix_i=0$.
\end{proof}

\begin{lemma}\label{lem-adjoint-special}
 Proposition~\ref{prop-adjoint} holds when
 $\al\in\Tht_{I,d+1}\leq\Phi_{I,d+1}$. 
\end{lemma}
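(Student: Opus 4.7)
The plan is to reduce the identity to the Stokes-type formula of Lemma~\ref{lem-grad}, after converting both pairings into a common form via interior multiplication. By bilinearity, it suffices to treat $\al=f\,\al_0$ and $\om=g\,\om_0$ with $f,g\in P_I$, $\al_0\in\Lm^{d+1}(W_I^\vee)$ constant, and $\om_0\in\Lm^d(W_I)$ constant. Introduce the scalars $c_i:=\ip{dt_i\im\al_0,\om_0}_I\in\K$; these will serve as the weights $x_i=c_i$ in Lemma~\ref{lem-grad}.

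For the right side I would expand $d\om=\sum_i(\partial g/\partial t_i)\,dt_i\wedge\om_0$ and invoke Lemma~\ref{lem-int-mult}(d) to rewrite $\ip{\al_0,dt_i\wedge\om_0}$ as $(-1)^{d+1}c_i$, obtaining $(-1)^{d+1}\iip{\al,d\om}=\int_I f\sum_i(\partial g/\partial t_i)\,c_i$. For the left side I would substitute the explicit formulas for $\dl'$ and $\dl''$ from Definition~\ref{defn-delta} and pair term by term with $g\,\om_0$. The $\dl''$ contribution naturally involves $\ip{dt_j\im\al_0,\res^I_{I\sm\{j\}}\om_0}_{I\sm\{j\}}$; this equals $c_j$, because $dt_j\im\al_0$ already lies in $(W_I/dt_j)^\vee=W^\vee_{I\sm\{j\}}$ by Lemma~\ref{lem-int-mult}(d) and is therefore unaffected by the restriction.

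Subtracting and collecting terms should then reduce the claim to the single identity
\[ \int_I\sum_i c_i\,\partial(fg)/\partial t_i+\sum_j c_j\int_{I\sm\{j\}}\res^I_{I\sm\{j\}}(fg)=0, \]
which is precisely Lemma~\ref{lem-grad} applied to $fg$ with $x_i=c_i$; the applicability hypothesis $\sum_i c_i=0$ holds because $\sum_i dt_i=0$ in $W_I$. The main obstacle will be sign bookkeeping, especially the $(-1)^{k+1}$ from the definition of $\im$, the $(-1)^{m(m-1)/2}$ in the standard pairing, and the minus signs in the definition of $\dl$; but once all of these are tracked consistently, the algebraic identity collapses exactly to the stated application of Lemma~\ref{lem-grad}.
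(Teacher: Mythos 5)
Your proposal is correct and follows essentially the same route as the paper: reduce to $\al=f\al_0$, $\om=g\om_0$, set $x_i=\ip{dt_i\im\al_0,\om_0}$ (your $c_i$), note $\sum_ix_i=0$ since $\sum_idt_i=0$, and apply Lemma~\ref{lem-grad} to $fg$, matching the three resulting terms with $(-1)^{d+1}\iip{\al,d\om}$, the $\dl'$ contribution, and the $\dl''$ contribution respectively. The sign bookkeeping you defer works out exactly as you predict.
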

\begin{proof}
 We reduce by linearity to the case where $\al=f\,\al_0$ and
 $\om=g\,\om_0$ for some $f,g\in\tP_I$ and
 $\al_0\in\Lm^{d+1}(W_I^\vee)$ and $\om_0\in\Lm^d(W_I)$.  Put
 \[ x_i=\ip{dt_i\im\al_0,\om_0}=
     (-1)^{d+1}\ip{\al_0, dt_i\wedge\om_0} \in\K,
 \]
 and observe that $\sum_ix_i=0$ (because $\sum_idt_i=0$).  We can thus
 apply Lemma~\ref{lem-grad} to the function $fg$ giving
 \[ \int_I f.\nabla_x(g) + \int_I \nabla_x(f).g + 
     \sum_i x_i\int_{I\sm\{i\}}\res^I_{I\sm\{i\}}(fg) = 0.
 \]
 From the definitions we find that
 \begin{align*}
  f.\nabla_x(g) 
   &= (-1)^{d+1}\sum_i
       f\tfrac{\partial g}{\partial t_i}\ip{\al_0,dt_i\wedge\om_0} \\
   &= (-1)^{d+1}\ip{f\al_0,dg\wedge\om_0} 
    = (-1)^{d+1}\ip{\al,d\om}.
 \end{align*}
 By a similar argument, we have $\nabla_x(f)g=\ip{df\im\al_0,\om}$.
 Next, recall that we can interpret $dt_i\im\al_0$ as an element of
 $\Lm^d(W_{I\sm\{i\}}^\vee)$, and then we have
 \[ x_i = \ip{dt_i\im\al_0,\res^I_{I\sm\{i\}}(\om_0)}. \]
 It follows that 
 \[ x_i\res^I_{I\sm\{i\}}(fg) = 
     \ip{\res^I_{I\sm\{i\}}(f)dt_i\im\al,\res^I_{I\sm\{i\}}(\om)},
 \]
 and thus that 
 \[ \int_{I\sm\{i\}} x_i\res^I_{I\sm\{i\}}(fg) = 
     \iip{i_{I\sm\{i\}}(\res^I_{I\sm\{i\}}(f)dt_i\im\al_0),\om}. 
 \]
 The lemma now follows by combining these facts with the definition of
 $\dl(\al)$.  
\end{proof}

\begin{proof}[Proof of Proposition~\ref{prop-adjoint}]
 The element $\al\in\Phi_{I,m+1}$ can be written as
 $\sum_{\emptyset\neq J\sse I}i_J(\al_J)$, with $\al_J\in\Tht_J$.  By
 applying Lemma~\ref{lem-adjoint-special} to the pairs
 $(\al_J,\res^I_J(\om))$ we recover the statement of
 Proposition~\ref{prop-adjoint}.
\end{proof}

\begin{lemma}\label{lem-xi-inj}
 The map 
 \[ \xi_I \: \Phi_{I,k} \to \hPhi_{I,k}. \]
 is injective.
\end{lemma}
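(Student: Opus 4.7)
Write $\al=\sum_{\emptyset\neq J\sse I} i_J(\al_J)$ with $\al_J\in\Tht_{J,k}$, and suppose $\xi_I(\al)=0$, so that $\iip{\al,\om}=\sum_J \int_J \ip{\al_J,\res^I_J(\om)}=0$ for every $\om\in\Om^k_I$. Aiming at a contradiction, assume $\al\neq 0$ and pick a subset $J^*\sse I$ that is \emph{maximal} under inclusion among those with $\al_{J^*}\neq 0$.

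The plan is to isolate the $J=J^*$ summand by testing the pairing against forms supported on the face $J^*$. Set $p=\prod_{j\in J^*}t_j\in P_I$ and take $\om=p\cdot\om_0$ for arbitrary $\om_0\in\Om^k_I$. If $J\not\supseteq J^*$ then some $j\in J^*\sm J$ has $\res^I_J(t_j)=0$, so $\res^I_J(p)=0$ and the $J$-term vanishes; if $J\supsetneq J^*$ then $\al_J=0$ by maximality. Only $J=J^*$ contributes, giving
\[
 0 = \iip{\al,\om} = \int_{J^*} p^*\cdot\ip{\al_{J^*},\res^I_{J^*}(\om_0)},
\]
where $p^*=\prod_{j\in J^*}t_j\in P_{J^*}$.

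Since the ring map $P_I\to P_{J^*}$ and the surjection $W_I\to W_{J^*}$ are both quotient maps, $\res^I_{J^*}\:\Om^k_I\to\Om^k_{J^*}$ is surjective, and $\om_1:=\res^I_{J^*}(\om_0)$ varies freely in $\Om^k_{J^*}$. Expanding $\al_{J^*}=\sum_T g_T\,e_T$ in a basis $\{e_T\}$ of $\Lm^k(W^\vee_{J^*})$ dual (up to a global sign) to the standard basis $\{dt^T\}$ of $\Lm^k(W_{J^*})$, and substituting $\om_1=h\cdot dt^T$ with $h\in P_{J^*}$, the identity reduces to
\[
 \int_{J^*} p^*\, g_T\, h = 0 \qquad \text{for all } h\in P_{J^*},
\]
for each admissible $T$.

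To conclude I would invoke the non-degeneracy of the bilinear form $(f,g)\mapsto\int_{J^*}fg$ on $P_{J^*}$: combined with the fact that $P_{J^*}$ is an integral domain (a polynomial ring in $|J^*|-1$ variables) and $p^*\neq 0$, this forces $g_T=0$ for every $T$, hence $\al_{J^*}=0$, contradicting the choice of $J^*$. This non-degeneracy is the only substantive obstacle. Over $\K=\R$ it follows from positive definiteness ($\int_{J^*}f^2>0$ for $f\neq 0$, using the identification in Remark~\ref{rem-int}); the Gram matrix in a monomial basis has rational entries, so the result descends to $\Q$ and extends to any characteristic-zero $\K$ by base change.
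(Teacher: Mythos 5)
Your proof is correct and follows essentially the same route as the paper's: both isolate the maximal face $J$ by multiplying the test form by $\prod_{j\in J}t_j$ (so that all other summands either vanish by maximality or restrict to zero), and both ultimately rest on positivity of the integral of a square over $\R$ together with flat base change to handle general $\K$. The only difference is packaging: the paper exhibits a single explicit witness $\om=fg\,\om_0$ with $\int_J f_0^2\,\res^I_J(g)>0$, whereas you phrase the final step as non-degeneracy of the pairing $(f,h)\mapsto\int_{J}fh$ on $P_{J}$ combined with the integral-domain property.
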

\begin{proof}
 If we can prove this for $\K=\R$ then it will follow for $\K=\Q$ by
 restriction, and then for arbitrary $\K$ by tensoring up again.  We
 therefore take $\K=\R$ for the rest of the proof.

 Consider a nonzero element $\al=\sum_Ji_J(\al_J)$ of the
 domain.  Choose a set $J$ of largest possible size with
 $\al_J\neq 0$ in $\Tht_{J,k}$.  As $\al_J$ is nonzero, and
 $\Tht_{J,k}$ is dual over $P_J$ to  $\Om_J^k$, and the
 restriction map $\Om_I^k\to\Om_J^k$ is surjective, we can choose
 $\om\in\Om^k_I$ such that the element
 $f_0=\ip{\al_J,\res^I_J(\om)}\in P_J$ is nonzero.  We can then choose
 $f\in P_I$ with $\res^I_J(f)=f_0$.    
 We also put $g=\prod_{j\in J}t_j\in P_I$ and $\tht=fg\om\in\Om_I^k$.
 We claim that $\xi_I(\al)(\tht)=\iip{\al,\tht}\neq 0$.  Indeed,
 we have 
 \[ \iip{i_J(\al_J),\tht} =
     \int_J \ip{\al_J,\res^I_J(fg\om)}
     = \int_J f_0^2\res^I_J(g).
 \]
 Now $g>0$ on the interior of the simplex $\Dl_J$, and $f_0^2$ is
 nonnegative everywhere and strictly positive on a nonempty open set,
 so the integral is strictly positive.  However, we also need to
 consider the other terms $\iip{i_K(\al_K),\tht}$ for $K\neq J$.   
 If $K$ is a strict superset of $J$ then $\al_K=0$ by our choice of
 $J$.  If $K\not\supseteq J$  then we can
 choose $j\in J\sm K$ and then $\res^I_K(t_j)=0$ so $\res^I_K(g)=0$.
 Either way we find that $\iip{i_K(\al_K),\om}=0$.  It follows that
 $\iip{\al,\om}=\iip{i_J(\al_J),\om}>0$, as required.
\end{proof}

\begin{definition}\label{defn-theta}
 Let $\tW_I$ be the vector space freely generated by
 $\{dt_i\st i\in I\}$, so $W_I=\tW_I/\sum_idt_i$.  Let
 $\{e_i\st i\in I\}$ be the obvious basis for $\tW_I^\vee$, so that
 $W_I^\vee$ is spanned by the elements $e_i-e_j$.  Next, in the case
 $I=[n]=\{0,1,\dotsc,n\}$ put 
 \begin{align*}
  \ttht_{[n]} &=
    e_0\wedge e_1\wedge\dotsb\wedge e_n
     \in\Lm^{n+1}(\tW_{[n]}^\vee) \\
  \tht_{[n]}  &= 
    (e_1-e_0)\wedge(e_2-e_0)\wedge\dotsb\wedge(e_n-e_0) \\
  &= (e_1-e_0)\wedge(e_2-e_1)\wedge\dotsb\wedge(e_n-e_{n-1})
      \in\Lm^n(W_{[n]}^\vee)\leq\Tht_{[n],n}.
 \end{align*}
 It is an exercise to check that the two expressions for $\tht_{[n]}$
 are the same, and that $e_i\wedge\tht_{[n]}=\ttht_{[n]}$ for all $i$,
 and that $\tht_{[n]}$ is the unique element of $\Lm^n(W_{[n]}^\vee)$
 with this property.

 If $I$ is any finite ordered set with $|I|=n+1$ then there is a
 unique ordered bijection $[n]\to I$, and we use this to define
 $\ttht_I\in\Lm^{n+1}(\tW_I^\vee)$ and $\tht_I\in\Lm^n(W_I^\vee)$.  It
 is easy to see that $\Lm^{n+1}(\tW_I)=\K.\ttht_I$ and
 $\Lm^n(W_I)=\K.\tht_I$.  
\end{definition}

\begin{lemma}\label{lem-dl-tht}
 We have $\dl'(\tht_{[n]})=0$ and 
 \[ \dl''(\tht_{[n]}) = \dl(\tht_{[n]}) =
     -\sum_{j\in [n]} (-1)^ji_{[n]\sm\{j\}}(\tht_{[n]\sm\{j\}}).
 \]
\end{lemma}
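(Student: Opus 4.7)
The plan is to trace through Definition~\ref{defn-delta} with $\tht_{[n]}\in\Tht_{[n],n}$ viewed as $i_{[n]}(1\cdot\tht_{[n]})$, so that $J=[n]$, $f=1$, and $\al_0=\tht_{[n]}$. Because $df=0$, the first expression for $\dl'$ yields $\dl'(\tht_{[n]})=0$ on the spot, reducing the lemma to establishing the formula for $\dl''$. From the definition, $\dl''(\tht_{[n]})=-\sum_{j\in[n]}i_{[n]\sm\{j\}}(dt_j\im\tht_{[n]})$, so the substantive task is to compute $dt_j\im\tht_{[n]}$ and show it equals $(-1)^j\tht_{[n]\sm\{j\}}$.

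By Lemma~\ref{lem-int-mult}(d), $dt_j\im\tht_{[n]}$ lives in the one-dimensional space $\Lm^{n-1}((W_{[n]}/dt_j)^\vee)=\Lm^{n-1}(W_{[n]\sm\{j\}}^\vee)$, which is spanned by $\tht_{[n]\sm\{j\}}$. The scalar can be pinned down by wedging with any $e_i$ for $i\in[n]\sm\{j\}$ and comparing with the defining property $e_i\wedge\tht_{[n]\sm\{j\}}=\ttht_{[n]\sm\{j\}}$. The Leibniz rule (Lemma~\ref{lem-int-mult}(b)) applied to $e_i\wedge\tht_{[n]}=\ttht_{[n]}$ gives
\[ dt_j\im\ttht_{[n]}=(dt_j\im e_i)\wedge\tht_{[n]}-e_i\wedge(dt_j\im\tht_{[n]}), \]
and for $i\neq j$ the term $dt_j\im e_i=-\ip{dt_j,e_i}$ vanishes by Lemma~\ref{lem-int-mult}(a). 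Hence $e_i\wedge(dt_j\im\tht_{[n]})=-dt_j\im\ttht_{[n]}$.

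A direct application of the Leibniz rule to $\ttht_{[n]}=e_0\wedge e_1\wedge\dotsb\wedge e_n$, combined with $dt_j\im e_k=-\dl_{jk}$, gives $dt_j\im\ttht_{[n]}=(-1)^{j+1}e_0\wedge\dotsb\widehat{e_j}\dotsb\wedge e_n=(-1)^{j+1}\ttht_{[n]\sm\{j\}}$. Therefore $e_i\wedge(dt_j\im\tht_{[n]})=(-1)^j\ttht_{[n]\sm\{j\}}=(-1)^j e_i\wedge\tht_{[n]\sm\{j\}}$, and one-dimensionality forces $dt_j\im\tht_{[n]}=(-1)^j\tht_{[n]\sm\{j\}}$. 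Plugging this back into the formula for $\dl''$ yields the stated expression, and $\dl(\tht_{[n]})=\dl'(\tht_{[n]})+\dl''(\tht_{[n]})=\dl''(\tht_{[n]})$ by the first step. The only real obstacle is sign bookkeeping, in particular tracking which exterior algebra ($\tW_{[n]}^\vee$ versus $W_{[n]\sm\{j\}}^\vee$) each wedge product inhabits and confirming that the interior product genuinely factors through the quotient; both are immediate from Lemma~\ref{lem-int-mult}.
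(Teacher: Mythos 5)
Your proof is correct, but it reaches the key identity $dt_j\im\tht_{[n]}=(-1)^j\tht_{[n]\sm\{j\}}$ by a different route than the paper. The paper computes the interior product directly from the two explicit factorisations of $\tht_{[n]}$ given in Definition~\ref{defn-theta}, using the product $(e_1-e_0)\wedge(e_2-e_1)\wedge\dotsb$ for $j=0$ and the product $(e_1-e_0)\wedge(e_2-e_0)\wedge\dotsb$ for $j>0$; this forces a case split but never leaves $\Lm^*(W^\vee)$. You instead exploit the characterisation $e_i\wedge\tht_{[n]}=\ttht_{[n]}$: since $dt_j\im\tht_{[n]}$ lies in the one-dimensional space $\Lm^{n-1}(W^\vee_{[n]\sm\{j\}})$ by Lemma~\ref{lem-int-mult}(d), it suffices to wedge with a single $e_i$ ($i\neq j$) and compare with $e_i\wedge\tht_{[n]\sm\{j\}}=\ttht_{[n]\sm\{j\}}$, which reduces everything to the trivial computation $dt_j\im\ttht_{[n]}=(-1)^{j+1}\ttht_{[n]\sm\{j\}}$ in $\Lm^*(\tW^\vee_{[n]})$. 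This buys a uniform treatment of all $j$ and cleaner sign bookkeeping, at the cost of working in the larger algebra $\Lm^*(\tW^\vee_{[n]})$ and relying on the properties of $\tht_{[n]}$ that Definition~\ref{defn-theta} leaves as an exercise (in particular that $e_i\wedge(-)$ is injective on the relevant line, which follows from $\ttht_{[n]\sm\{j\}}\neq 0$). The two implicit compatibility checks you flag at the end --- that the interior product against $dt_j\in\tW_{[n]}$ restricts to the interior product against its image in $W_{[n]}$ on the subalgebra $\Lm^*(W^\vee_{[n]})$, and that the inclusion $\tW^\vee_{[n]\sm\{j\}}\hookrightarrow\tW^\vee_{[n]}$ carries $\tht_{[n]\sm\{j\}}$ and $\ttht_{[n]\sm\{j\}}$ where you need them --- do both hold, since both interior products are derivations agreeing on $\Lm^1$.
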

\begin{proof}
 By inspection of the definitions, this reduces to the claim that 
 \[ dt_j\im\tht_{[n]} = (-1)^j\tht_{[n]\sm\{j\}}. \]
 For $j=0$ it is most convenient to use the expression
 \[ \tht_{[n]} =
     (e_1-e_0)\wedge(e_2-e_1)\wedge\dotsb\wedge(e_n-e_{n-1})
 \]
 and the derivation property 
 \[ dt_0\im(a\wedge b) =
     (dt_0\im a)\wedge b + (-1)^{|a|}a\wedge(dt_0\im b).
 \]
 We have $dt_0\im(e_1-e_0)=-\ip{dt_0,e_1-e_0}=1$ and
 $dt_0\im(e_{k+1}-e_k)=0$ for $k>0$.  It follows that 
 \[ dt_0\im\tht_{[n]} = 
     (e_2-e_1)\wedge(e_3-e_2)\wedge\dotsb\wedge(e_n-e_{n-1})
      = \tht_{[n]\sm\{0\}}
 \]
 as claimed.

 For $j>0$ we instead use the expression 
 \[ \tht_{[n]} = 
     (e_1-e_0)\wedge(e_2-e_0)\wedge\dotsb\wedge(e_n-e_0).
 \]
 We have $dt_j\im(e_k-e_0)=0$ for $k\neq j$, so only the term
 $dt_j\im(e_j-e_0)$ contributes, and this has a factor $(-1)^{j-1}$
 because of its position in the list.  We also have
 $dt_j\im(e_j-e_0)=-\ip{dt_j,e_j-e_0}=-1$ which gives one more sign
 change, so $dt_j\im\tht_{[n]}=(-1)^j\tht_{[n]\sm\{j\}}$ as claimed.
\end{proof}

\begin{lemma}
 For any totally ordered set $J$ we have
 $H_*(\Tht_{J,*};\dl')=\K.\tht_J$.
\end{lemma}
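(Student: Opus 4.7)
My plan is to identify $(\Tht_{J,*},\dl')$ with the algebraic de Rham complex on $\spec(P_J)$ with degrees reversed, then invoke the algebraic Poincar\'e lemma.

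Use the order on $J$ to identify it with $[n]$, where $n=|J|-1$.  The relation $1=\sum_i t_i$ lets us treat $t_1,\dotsc,t_n$ as free polynomial generators of $P_J$.  Let $\eta_1,\dotsc,\eta_n\in W_J^\vee$ be the basis dual to $dt_1,\dotsc,dt_n\in W_J$.  Unwinding Definition~\ref{defn-delta}, using $dt_k\im\eta_j=-\dl_{kj}$, yields the formula
\[ \dl'(f\,\eta_{i_1}\wedge\dotsb\wedge\eta_{i_m}) =
     \sum_{j=1}^m (-1)^{j-1}\tfrac{\partial f}{\partial t_{i_j}}\,
     \eta_{i_1}\wedge\dotsb\widehat{\eta_{i_j}}\dotsb\wedge\eta_{i_m},
\]
which is visibly the Koszul-type differential dual to de Rham on affine $n$-space.

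Next, let $\tht_J^\vee\in\Lm^n(W_J)$ be the element dual to $\tht_J$ under the pairing of Section~\ref{sec-de-rham}, and let $\kappa\:\Lm^m(W_J^\vee)\xrightarrow{\sim}\Lm^{n-m}(W_J)$ be the standard Poincar\'e-duality isomorphism induced by $\tht_J^\vee$ (evaluation of an alternating form on $\tht_J^\vee$).  Extending $P_J$-linearly gives an isomorphism $\kappa\:\Tht_{J,m}\xrightarrow{\sim}\Om^{n-m}_J$.  A direct calculation with Lemma~\ref{lem-int-mult} shows that $\kappa(u\im\al)=\pm\,u\wedge\kappa(\al)$ for $u\in W_J$ and $\al\in\Lm^m(W_J^\vee)$, with a sign depending only on $m$ and $n$; combined with the explicit formula above, this gives an intertwining identity $\kappa\circ\dl'=\pm\,d\circ\kappa$.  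Hence $\kappa$ is a chain isomorphism from $(\Tht_{J,*},\dl')$ onto $(\Om^{n-*}_J,d)$.

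By the classical algebraic Poincar\'e lemma, $H^*(\Om^*_J,d)=\K$ in degree $0$ and vanishes otherwise, so transporting via $\kappa$ gives $H_*(\Tht_{J,*},\dl')=\K$ concentrated in degree $n$.  Finally $\kappa(\tht_J)=\pm 1\in P_J$ by the normalisation of the pairing on top forms, so $\tht_J$ represents the generator of $H_n$, confirming $H_*(\Tht_{J,*};\dl')=\K\cdot\tht_J$.  The main obstacle is tracking the signs in $\kappa\circ\dl'=\pm d\circ\kappa$: this is intricate but purely mechanical, and everything else reduces to the classical Poincar\'e lemma.
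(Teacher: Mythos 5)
Your proof is correct, but it takes a different route from the paper's.  The paper identifies $P_J$ and $W_J^\vee$ with $\K[t_1,\dotsc,t_n]$ and $\K\{w_1,\dotsc,w_n\}$ exactly as you do, but then observes that $(\Tht_{J,*},\dl')$ splits as a tensor product $\bigotimes_i C(i)_*$ of the two-term complexes $C(i)_*=\K[t_i]\{1,w_i\}$ with $\dl'(f(t_i)w_i)=f'(t_i)$; each factor has homology $\K.w_i$ in degree one, and the K\"unneth theorem finishes the argument.  You instead transport the whole complex to the polynomial de Rham complex via the Poincar\'e-duality map $\kappa(\al)=\al\im\tht_J^\vee$ (contraction against the dual volume element) and quote the algebraic Poincar\'e lemma.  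Both arguments are sound, and both ultimately rest on the same elementary fact that $d/dt\:\K[t]\to\K[t]$ is surjective with kernel $\K$ in characteristic zero --- the Poincar\'e lemma you invoke is itself usually proved by the very K\"unneth decomposition the paper uses, just on the de Rham side.  What your route buys is conceptual clarity: it explains \emph{why} $\Tht_{J,*}$ computes the same thing as $\Om^*_J$ (they are literally isomorphic complexes up to degree reversal), which is pleasant given the duality theme of the paper.  What it costs is the intertwining identity $\kappa\circ\dl'=\pm\,d\circ\kappa$ and the identity $\kappa(u\im\al)=\pm\,u\wedge\kappa(\al)$ underlying it; these are standard multilinear algebra (on a par with Lemma~\ref{lem-int-mult}, which the paper also leaves to the reader), but you have asserted rather than verified them, and the degree-dependent sign means $\kappa$ is only a chain map after rescaling each graded piece --- harmless for homology, but worth saying explicitly.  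The paper's direct factorisation avoids all sign bookkeeping, which is why it is the shorter proof.
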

(The ordering is only used here to fix the sign of the generator.)
\begin{proof}
 We may assume that $J=[m]$ for some $m$, so $P_J=\K[t_1,\dotsc,t_m]$
 and $W_J=\K\{dt_1,\dotsc,dt_m\}$.  Let $\{w_1,\dotsc,w_m\}$ be the
 dual basis for $W^\vee_J$ and put $C(i)_*=\K[t_i]\{1,w_i\}$, so that
 $\Tht_{J,*}=\bigotimes_iC(i)_*$.  It is not hard to see that this
 decomposition is compatible with the differentials, and that in
 $C(i)_*$ we have $\dl'(f(t_i)w_i)=f'(t_i)$ and $\dl'(g(t_i))=0$.  It
 follows that $H_*(C(i)_*;\dl')=\K.w_i$, and thus, by the K\"unneth
 theorem, that $H_*(\Tht_{J,*};\dl')=K.\bigwedge_iw_i=\K.\tht_J$.
\end{proof}

We can now calculate the homology of $\Phi_{I,*}$.  Note that for
$j\in I$ we have $\Tht_{\{j\},*}=\K$ (concentrated in degree zero), so
we have an element $i_{\{j\}}(1)\in\Phi_{I,0}$, which is a cycle for
degree reasons.
\begin{proposition}\label{prop-H-Phi}
 The elements $i_{\{j\}}(1)$ are all homologous to each other, and the
 corresponding homology class generates $H_0(\Phi_{I,*};\dl)$ freely
 over $\K$.  Moreover, we have $H_d(\Phi_{I,*};\dl)=0$ for all
 $d\neq 0$.
\end{proposition}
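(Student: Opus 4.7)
The plan is to run the spectral sequence of the double complex
$(\Phi_{I,(*,*)},\dl',\dl'')$ coming from the column filtration
$F_p\Phi_{I,m}=\bigoplus_{p'\le p}\Phi_{I,(p',m-p')}$, so that the
zeroth differential is $\dl'$ and the first is $\dl''$.

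For the $E^1$ page I take $\dl'$-homology column by column.  Since
$\Phi_{I,(p,*)}=\bigoplus_{|J|=p}\Tht_{J,*}$ as a $\dl'$-complex, the
preceding lemma immediately gives
$E^1_{p,-1}=\bigoplus_{|J|=p}\K\cdot[\tht_J]$ and $E^1_{p,q}=0$ for
$q\neq-1$; the unique surviving class in each column sits in total
degree $|J|-1=p-1$, consistent with $\tht_J\in\Lm^{|J|-1}(W_J^\vee)$.

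Next I identify $d_1=\dl''$ on this single surviving row.  Applying
Lemma~\ref{lem-dl-tht} to each $J\sse I$ with its inherited order
expresses $\dl''(i_J(\tht_J))$ as the signed sum of face classes
$i_{J\sm\{j\}}(\tht_{J\sm\{j\}})$ for $j\in J$, with a phantom
$i_\emptyset(\tht_\emptyset)$ term in the singleton case that we
read as zero (this is automatic, since $W_{\{j\}}=0$ forces
$\dl''i_{\{j\}}(1)=0$ outright).  Thus $(E^1_{*,-1},\dl'')$ is, up to
an overall sign, the unaugmented simplicial chain complex of the
simplex $\Dl^I$ on the vertex set $I$.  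As $\Dl^I$ is non-empty and
connected, this yields $E^2_{1,-1}=\K$, freely generated by
$[i_{\{j\}}(1)]$ for any $j\in I$, and $E^2_{p,-1}=0$ for $p>1$.

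Convergence is automatic, since $\Tht_{J,m}=0$ unless $m\le|J|-1$:
for each fixed total degree $m$ only the columns with $m+1\le
p\le|I|$ contribute to $\Phi_{I,m}$, so the filtration is finite in
each degree and $E^2=E^\infty$.  This gives $H_0(\Phi_{I,*};\dl)=\K$
generated freely by $[i_{\{j\}}(1)]$ and $H_d(\Phi_{I,*};\dl)=0$ for
$d\neq 0$.  The homology relation between different singletons can be
exhibited directly without the spectral sequence: for $j<j'$ in $I$
the edge chain $i_{\{j,j'\}}(\tht_{\{j,j'\}})$, with
$\tht_{\{j,j'\}}=e_{j'}-e_j$, has vanishing $\dl'$-part (constant
coefficient) and $\dl''$-part equal to $i_{\{j\}}(1)-i_{\{j'\}}(1)$
by Lemma~\ref{lem-dl-tht}, which makes the required cobounding
explicit.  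The only real bookkeeping care is with the phantom
singleton-boundary terms; once that is handled, the collapse of the
two-row spectral sequence onto a single class in bidegree $(1,-1)$
is completely routine.
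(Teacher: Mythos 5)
Your proof is correct and follows essentially the same route as the paper: the double-complex spectral sequence with $E^1$ computed column-by-column from the $\dl'$-homology lemma, $d_1$ identified via Lemma~\ref{lem-dl-tht}, and the explicit edge chain $i_{\{j,j'\}}(\tht_{\{j,j'\}})$ exhibiting the cobounding between singletons. The only difference is cosmetic: where you recognise the $E^1$ page directly as the unaugmented simplicial chain complex of the full simplex on the vertex set $I$, the paper reaches the same conclusion by mapping the acyclic complex $\Lm^*(\tW_I^\vee)$ with $d(e_i)=1$ (the augmented version of your complex) onto a suspension of the $E^1$ page and using the resulting long exact sequence.
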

\begin{proof}
 We may assume that $I$ is totally ordered, which gives an ordering on
 each subset $J\sse I$ and thus defines elements $\tht_J$ as before.  

 We now regard $\Phi_I$ as a double complex under $\dl'$ and $\dl''$,
 and use the resulting spectral sequence.  We write $C_*$ for the
 $E_1$ page, which is just
 \[ C_* = H_*(\Phi_{I,*};\dl') =
     \K\{\tht_J\st\emptyset\neq J\sse I\}.
 \]
 The differential is given by Lemma~\ref{lem-dl-tht}.  Note also that 
 \[ \Lm^*(\tW_I^\vee)=\Lm^*(e_i\st i\in I)=\K\{\ttht_J\st J\sse I\} \]
 (and here we do have a term for $J=\emptyset$).  We can make this a
 differential graded ring with $d(e_i)=1$ for all $i$, and the
 resulting homology is zero.  We can then define
 $\phi\:\Lm^*(\tW_I^\vee)\to\Sg C_*$ by 
 $\phi(\ttht_J)=\Sg\tht_J$ when $J\neq\emptyset$, and
 $\phi(1)=\phi(\ttht_\emptyset)=0$.  It follows from
 Lemma~\ref{lem-dl-tht} that $\phi$ is a chain map.  The short exact
 sequence $\K\to\Lm^*(\tW^\vee_I)\xra{\phi}\Sg C_*$ gives a long exact
 sequence in homology.  This in turn shows that $H_i(C_*)=0$ for
 $i\neq 0$, and gives an isomorphism $H_0C_*=H_1(\Sg C_*)=\K$.  Our
 spectral sequence must therefore collapse at the $E_2$ page, so
 $H_i(\Phi_{I,*})=0$ for all $i\neq 0$, and the construction gives an
 isomorphism $H_0(\Phi_{I,*})\to\K$.  We leave it to the reader to
 check that this sends $i_{\{j\}}(1)$ to $1$ for all $j$.
\end{proof}

\section{Functorality of $\Phi_I$}

\begin{definition}\label{defn-int-fibre}
 Let $\sg\:I\to J$ be a surjective map.  As in
 Section~\ref{sec-de-rham} this gives maps $\sg^*\:P_J\to P_I$ and
 $\sg^*\:W_J\to W_I$ and $\sg^*\:\Om^*_J\to\Om^*_I$.  Next, for any
 map $\nu\:I\to\Z$ we define $\sg_*\nu\:J\to\Z$ by
 $(\sg_*\nu)(j)=\sum_{\sg(i)=j}\nu(i)$.  We then define a map
 $\sg_*\:\tP_I\to\tP_J$ (of abelian groups, not of rings) by
 $\sg_*(t^{[\nu]})=t^{[\sg_*(\nu+1)-1]}$.  We also let
 $\sg_*\:\Lm^*(W_I^\vee)\to\Lm^*(W_J^\vee)$ be dual to the map
 $\sg^*\:\Lm^*(W_J)\to\Lm^*(W_I)$, and we again write $\sg_*$ for the
 map 
 \[ \sg_*\ot\sg_*\:\tP_I\ot\Lm^*(W_I)^\vee \to
     \tP_J\ot\Lm^*(W_J)^\vee.
 \] 
\end{definition}
\begin{remark}\label{rem-covariant-functor}
 It is easy to check that in all the contexts mentioned we have
 $(\tau\sg)_*=\tau_*\sg_*$ for any pair of surjective maps
 $I\xra{\sg}J\xra{\tau}K$.  
\end{remark}

\begin{lemma}\label{lem-int-fibre}
 The map $\sg_*\:\tP_I\to\tP_J$ induces a map $\sg_*\:P_I\to P_J$
 satisfying $\int_J\sg_*(f)=\int_If$.
\end{lemma}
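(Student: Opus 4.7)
My plan is to verify both halves of the lemma---descent of $\sg_*$ from $\tP_I$ to $P_I$, and the integral identity---by direct calculation on monomials, after isolating a master formula for $\sg_*$ on a monomial. Using the definition $\sg_*(t^{[\nu]}) = t^{[\mu]}$ with $\mu := \sg_*(\nu+1) - 1 \in \N^J$ (which is nonnegative coordinatewise precisely because $\sg$ is surjective), one gets
\[ \sg_*(t^\nu) = \nu!\,\sg_*(t^{[\nu]}) = (\nu!/\mu!)\, t^\mu. \]

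With this in hand, the integral identity for a single monomial is immediate. Since $|\mu| = |\sg_*(\nu+1)| - |J| = |\nu| + |I| - |J|$, Definition~\ref{defn-int} gives
\[ \int_J \sg_*(t^\nu) = \frac{\nu!}{(|J|-1+|\mu|)!} = \frac{\nu!}{(|I|-1+|\nu|)!} = \int_I t^\nu. \]
Linearity extends the identity to all of $\tP_I$, and by Lemma~\ref{lem-int-well-defined} it then descends to $P_I$ once $\sg_*$ itself does.

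For the descent, I would verify $\sg_*\bigl((1-\sum_i t_i)\,t^\lambda\bigr) \in (1-\sum_j t_j)\,\tP_J$ for each monomial $t^\lambda$; by linearity this exhausts the ideal. Applying the master formula to $\lambda+\dl_k$ gives $\sg_*(t^{\lambda+\dl_k}) = \frac{\lambda_k+1}{\mu_{\sg(k)}+1}(\lambda!/\mu!)\, t_{\sg(k)}\, t^\mu$, where $\mu = \sg_*(\lambda+1)-1$. Grouping the sum $\sum_{k\in I} \sg_*(t^{\lambda+\dl_k})$ by the value $j = \sg(k)$ reduces everything to the combinatorial identity
\[ \sum_{k \in \sg^{-1}(j)}(\lambda_k + 1) = \mu_j + 1, \]
which holds by definition of $\mu_j = \sum_{\sg(i)=j}\lambda_i + |\sg^{-1}(j)| - 1$. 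The inner sums cancel the denominators, yielding $\sum_k \sg_*(t^{\lambda+\dl_k}) = (\lambda!/\mu!)(\sum_j t_j)\, t^\mu$; subtracting this from $\sg_*(t^\lambda) = (\lambda!/\mu!)\, t^\mu$ gives $(\lambda!/\mu!)(1-\sum_j t_j)\, t^\mu$, which indeed lies in the desired ideal.

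The only real obstacle is the bookkeeping: two different divided-power normalisations on the two sides and the ratio $\lambda!/\mu!$ must all line up simultaneously. Once the single combinatorial identity $\sum_{k \in \sg^{-1}(j)}(\lambda_k + 1) = \mu_j + 1$ is isolated, both the ideal containment and the integral identity collapse out mechanically.
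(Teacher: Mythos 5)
Your proposal is correct and takes essentially the same route as the paper: the paper's proof establishes $\sg_*(r_It^{[\nu]})=r_J\sg_*(t^{[\nu]})$ in the divided-power basis (which rests on exactly your identity $\sum_{k\in\sg^{-1}(j)}(\lambda_k+1)=\mu_j+1$) and then checks $|I|-1+|\nu|=|J|-1+|\mu|$ for the integral formula. You have simply carried out, in the ordinary monomial basis, the ``straightforward calculation'' that the paper leaves to the reader.
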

\begin{proof}
 Put $r_I=\sum_{i\in I}t_i$, so that $P_I=\tP_I/(1-r_I)\tP_I$ and
 $r_It^{[nu]}=\sum_i(\nu_i+1)t^{[\nu+e_i]}$.  A straightforward
 calculation shows that $\sg_*(r_It^{[\nu]})=r_J\sg_*(t^{[nu]})$, and
 it follows that $\sg_*$ induces a map $P_I\to P_J$.

 For the integral formula, put $n=|I|-1$ and $m=|J|-1$, so
 $\int_It^{[\nu]}=1/(n+|\nu|)!$ and $\int_Jt^{[\mu]}=(m+|\mu|)!$.  It
 will suffice to show that $n+|\nu|=m+|\mu|$, which is again
 straightforward.
\end{proof}
\begin{remark}\label{rem-singleton}
 If we let $\gm\:I\to 1$ be the unique map to a singleton, we find
 that $P_1=\K$ and $\gm_*(f)=\int_If$.  This gives another way to see
 that $\int_J\sg_*(f)=\int_I f$.
\end{remark}

\begin{lemma}\label{lem-star-adjoint}
 More generally, for $f\in P_I$ and $g\in P_J$ we have
 $\int_If.\sg^*(g)=\int_J\sg_*(f).g$. 
\end{lemma}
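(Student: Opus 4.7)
The plan is to deduce the identity from the \emph{projection formula}
\[ \sigma_*\bigl(f\cdot\sigma^*(g)\bigr) = \sigma_*(f)\cdot g \quad\text{in } P_J, \]
valid for all $f\in P_I$ and $g\in P_J$.  Once this is in hand, applying Lemma~\ref{lem-int-fibre} to $h=f\cdot\sigma^*(g)$ gives
\[ \int_I f\cdot\sigma^*(g) = \int_J \sigma_*\bigl(f\cdot\sigma^*(g)\bigr) = \int_J \sigma_*(f)\cdot g, \]
which is exactly the claim.

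To prove the projection formula I would reduce by bilinearity to the case of monomials $f=t^{[\nu]}$ and $g=t^{[\mu]}$.  A multinomial expansion of $\sigma^*(t_j)=\sum_{\sigma(i)=j}t_i$ yields the clean formula $\sigma^*(t^{[\mu]})=\sum_{\sigma_*\alpha=\mu} t^{[\alpha]}$.  Combining this with the divided-power product rule $t^{[\nu]}t^{[\alpha]}=\binom{\nu+\alpha}{\nu}t^{[\nu+\alpha]}$ and the definition of $\sigma_*$, one finds
\[ \sigma_*\bigl(t^{[\nu]}\sigma^*(t^{[\mu]})\bigr) = \sum_{\sigma_*\alpha=\mu}\binom{\nu+\alpha}{\nu}\,t^{[\sigma_*(\nu+1)-1+\mu]}, \]
using that $\sigma_*(\nu+\alpha+1)-1=\sigma_*(\nu+1)-1+\mu$ whenever $\sigma_*\alpha=\mu$.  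On the other hand $\sigma_*(t^{[\nu]})\cdot t^{[\mu]}$ is the single divided-power monomial $\binom{\sigma_*(\nu+1)-1+\mu}{\mu}\,t^{[\sigma_*(\nu+1)-1+\mu]}$, so the exponents already agree.

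All that remains is matching coefficients, i.e.\ proving
\[ \sum_{\sigma_*\alpha=\mu}\binom{\nu+\alpha}{\nu} = \binom{\sigma_*(\nu+1)-1+\mu}{\mu}. \]
Both sides factor as a product over $j\in J$, so one is reduced, for each fibre $\sigma^{-1}(j)$ of size $k$ with $M=\mu_j$ and $n_i=\nu_i$ for $i\in\sigma^{-1}(j)$, to the Chu--Vandermonde identity
\[ \sum_{\alpha_1+\cdots+\alpha_k=M}\prod_{i=1}^{k}\binom{n_i+\alpha_i}{n_i} = \binom{\textstyle\sum_i(n_i+1)-1+M}{M}, \]
which falls out instantly by equating coefficients of $x^M$ in $\prod_i(1-x)^{-(n_i+1)}=(1-x)^{-\sum_i(n_i+1)}$.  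The main obstacle is really just the bookkeeping imposed by the shift-by-one convention in $\sigma_*(t^{[\nu]})=t^{[\sigma_*(\nu+1)-1]}$; once that is handled carefully the projection formula, and hence the lemma, follow.
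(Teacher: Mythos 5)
Your proof is correct, but it takes a different route from the paper's. The paper computes the two integrals $\int_J\sg_*(t^{[\nu]})t^{[\mu]}$ and $\int_I t^{[\nu]}\sg^*(t^{[\mu]})$ directly, observes that both equal $\ep=1/(n+|\nu|+|\mu|)!$ times a product of coefficients $u_j$ (resp.\ $v_j$) over $j\in J$, and then proves $u_j=v_j$ by an explicit bijective count of interleavings of ordered sets with ``dividers''. You instead prove the stronger, pre-integration \emph{projection formula} $\sg_*(f\cdot\sg^*(g))=\sg_*(f)\cdot g$ in $P_J$ and then quote Lemma~\ref{lem-int-fibre}; the coefficient identity you must verify,
\[ \sum_{\sg_*\al=\mu}\binom{\nu+\al}{\nu}=\binom{\sg_*(\nu+1)-1+\mu}{\mu}, \]
is exactly the paper's $u_j=v_j$ (in the paper's notation $(a,b)$ denotes $\binom{a+b}{b}$), but you dispatch it by the generating-function identity $\prod_i(1-x)^{-(n_i+1)}=(1-x)^{-\sum_i(n_i+1)}$ rather than by a bijection. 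Both arguments are sound and both ultimately rest on the same Chu--Vandermonde identity. Your version buys a genuinely stronger intermediate statement --- the projection formula, which is the algebraic counterpart of Remark~\ref{rem-int-fibres}'s ``integration over fibres'' interpretation and is reusable elsewhere --- and a shorter verification of the combinatorial identity; the paper's version stays entirely at the level of the integrals it needs and keeps the combinatorics self-contained and elementary. One small point worth making explicit in a final write-up: your monomial computation takes place in $\tP_I$ and $\tP_J$, so you should note that the resulting identity descends to the quotients $P_I$ and $P_J$ (which it does, since $\sg^*$ is a ring map carrying $1-\sum_j t_j$ to $1-\sum_i t_i$ and $\sg_*$ descends by Lemma~\ref{lem-int-fibre}).
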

\begin{remark}\label{rem-int-fibres}
 One can deduce that in the case $\K=\R$, the map $\sg_*$ is given by
 integrating over fibres of the map $\sg_*\:\Dl_I\to\Dl_J$ of
 simplices.
\end{remark}
\begin{proof}
 We may assume that $f=t^{[\nu]}$ and $g=t^{[\mu]}$ for some
 $\nu\:I\to\N$ and $\mu\:J\to\N$.  Put $n=|I|-1$ and $m=|J|-1$ and
 $\ep=1/(n+|\nu|+|\mu|)!$.  Put $\ov{\nu}=\sg_*(\nu+1)-1$, so that
 $\sg_*(t^{[\nu]})=t^{[\ov{\nu}]}$ and $|\ov{\nu}|=|\nu|+n-m$ and
 $|\ov{\nu}|+|\mu|+m=|\nu|+|\mu|+n$.  Put
 $u_j=(\ov{\nu}_j,\mu_j)$ so
 $\sg_*(t^{[\nu]})t^{[\mu]}=(\prod_ju_j)t^{[\ov{\nu}+\mu]}$.
 and so
 $\int_J\sg_*(t^{[\nu]})t^{[\mu]}=(\prod_ju_j)\ep$.
 
 Next, put $I_j=\sg^{-1}\{j\}$, and let $\Lm_j$ be the set of maps
 $\lm\:I_j\to\N$ with $|\lm|=\mu_j$.  The binomial expansion tells
 us that
 \[ \sg^*t_j^{[\mu_j]} = (\sum_{i\in I_j}t_i)^{[\mu_j]} = 
     \sum_{\lm\in\Lm_j} t^{[\lm]}.
 \]
 Next, for $\lm\in\Lm_j$ put $c_\lm=\prod_{i\in I_j}(\nu_i,\lm_i)$,
 and then put $v_j=\sum_{\lm\in\Lm_j}c_\lm$.  Put 
 \[ \Lm = \prod_j\Lm_j\simeq \{\lm\:I\to\N\st\sg_*(\lm)=\mu\}, \]
 and for $\lm=(\lm_j)_{j\in J}$ put $c_\lm=\prod_jc_{\lm_j}$, and then
 put $v=\sum_{\lm\in\Lm}c_\lm=\prod_jv_j$.  We find that 
 $t^{[\nu]}\sg^*t^{[\mu]}=\sum_{\lm\in\Lm}c_\lm t^{[\nu+\lm]}$.  For these
 terms we have $|\lm|=|\sg_*(\lm)|=|\mu|$ and so
 $\int_It^{[\nu+\lm]}=\ep$.  It follows that
 \[ \int_It^{[\nu]}\sg^*(t^{[\mu]}) = 
     (\sum_\lm c_\lm)\ep = (\prod_jv_j)\ep,
 \]
 so it will suffice to show that $u_j=v_j$. 

 For this, we choose an identification of $I_j$ with the set
 $[d]=\{0,1,\dotsc,d\}$ for some $d\geq 0$.  Let $N_i$ be a totally
 ordered set of size $\nu_i$, and put $N=\coprod_{i\in[d]}N_i$,
 ordered so that $N_i$ comes before $N_{i+1}$.  Now put
 $D=\{i+\half\st 0\leq i<d\}$ and call this the set of ``dividers'';
 we order $N\amalg D$ so that $i+\half$ comes between $N_i$ and
 $N_{i+1}$.  Let $M$ be a totally ordered set of size $\mu_j$, and let
 $U$ be the set of total orderings of $N\amalg D\amalg M$ that are
 compatible with the given orderings of $N\amalg D$ and $M$.  Now
 $|N\amalg D|=\ov{\nu}_j$ and $|M|=\mu_j$ so
 $|U|=(\ov{\nu}_j,\mu_j)=u_j$.  Given an ordering in $U$ we can split
 $M$ along the dividers to get a decomposition
 $M=M_0\amalg\dotsb\amalg M_d$.  Here the sets $M_i$ are consecutive
 intervals, so the decomposition is completely determined by the
 numbers $\lm_i=|M_i|$, which satisfy $\sum_i\lm_i=\mu_i$.  Given the
 decomposition $M=\coprod_iM_i$, the order on $N\amalg D\amalg M$ is
 determined by the relative order of $M_i$ and $N_i$ within
 $N_i\amalg M_i$, for which the number of choices is
 $\prod_i(\nu_i,\lm_i)=c_\lm$.  Using this, one can check that
 $|U|=v_j$, so $u_j=v_j$ as required.    
\end{proof}

\begin{definition}\label{defn-star}
 Let $\sg\:I\to I'$ be an arbitrary map of finite sets.  Given a
 subset $J\sse I$ and an element $\al\in\Tht_{J,*}$ we can interpret
 $\sg$ as a surjection $J\to\sg(J)$ and thus get an element
 $i_{\sg(J)}(\sg_*(\al))\in\Phi_{I',*}$.  We define a map
 $\sg_*\:\Phi_{I,*}\to\Phi'_{I,*}$ by
 $\sg_*(i_J(\al))=i_{\sg(J)}(\sg_*(\al))$. 
\end{definition}

\begin{remark}\label{rem-dl-tht}
 Let $\dl_j\:[n-1]\to[n]$ be the unique increasing map with image
 $[n]\sm\{j\}$.  We can now rewrite Lemma~\ref{lem-dl-tht} as 
 \[ \dl''(\tht_{[n]}) = \dl(\tht_{[n]}) =
     -\sum_{j\in [n]} (-1)^j(\dl_j)_*(\tht_{[n-1]}).
 \]
\end{remark}

\begin{proposition}\label{prop-star-adjoint}
 For $\al\in\Phi_{I,m}$ and $\om\in\Om_{I'}^m$ we have
 $\iip{\sg_*(\al),\om}_{I'}=\iip{\al,\sg^*(\om)}_I$.
\end{proposition}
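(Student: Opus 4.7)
The plan is to reduce to the surjective case handled by Lemma~\ref{lem-star-adjoint} and then peel off the exterior--algebra factor using duality. By linearity it suffices to treat $\al=i_J(\bt)$ with $\bt\in\Tht_{J,m}$. Let $\iota_J\:J\mra I$ and $\iota_{\sg(J)}\:\sg(J)\mra I'$ be the inclusions and let $\bxi\:J\to\sg(J)$ be the surjection induced by $\sg$, so $\sg\circ\iota_J=\iota_{\sg(J)}\circ\bxi$ and hence
\[ \res^I_J\circ\sg^* = \bxi^*\circ\res^{I'}_{\sg(J)} \]
on $\Om^*$. Writing $\om'=\res^{I'}_{\sg(J)}(\om)$, the definitions of $\iip{\cdot,\cdot}$ and of $\sg_*\:\Phi_{I,*}\to\Phi_{I',*}$ (Definition~\ref{defn-star}) give
\[ \iip{\al,\sg^*(\om)}_I = \int_J\ip{\bt,\bxi^*(\om')}_J,
   \qquad
   \iip{\sg_*(\al),\om}_{I'} = \int_{\sg(J)}\ip{\bxi_*(\bt),\om'}_{\sg(J)}. \]
So the statement reduces to the identity
\[ \int_J\ip{\bt,\bxi^*(\om')}_J = \int_{\sg(J)}\ip{\bxi_*(\bt),\om'}_{\sg(J)} \]
for an arbitrary surjection $\bxi\:J\to J'$, $\bt\in\Tht_{J,m}$, and $\om'\in\Om^m_{J'}$.

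For this, reduce further by bilinearity to $\bt=f\,\al_0$ and $\om'=g\,\om_0$ with $f\in P_J$, $g\in P_{J'}$, $\al_0\in\Lm^m(W_J^\vee)$, and $\om_0\in\Lm^m(W_{J'})$. Because $\bxi^*$ is a ring map on $P$ and an algebra map on $\Lm^*(W)$, this expresses both sides as a product of a scalar coming from the exterior factor and an integral of a polynomial. Concretely,
\[ \ip{\bt,\bxi^*(\om')}_J = f\cdot\bxi^*(g)\cdot\ip{\al_0,\bxi^*(\om_0)}_J, \qquad
   \ip{\bxi_*(\bt),\om'}_{J'} = \bxi_*(f)\cdot g\cdot\ip{\bxi_*(\al_0),\om_0}_{J'}. \]
The two scalar factors agree because, by Definition~\ref{defn-int-fibre}, $\bxi_*\:\Lm^m(W_J^\vee)\to\Lm^m(W_{J'}^\vee)$ is defined as the dual of $\bxi^*\:\Lm^m(W_{J'})\to\Lm^m(W_J)$, and both pairings carry the same sign $(-1)^{m(m-1)/2}$ so no correction is needed. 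The two integrals agree by Lemma~\ref{lem-star-adjoint} applied to $f$ and $g$. Combining these yields the reduced identity, hence the proposition.

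The only real obstacle is bookkeeping: verifying that the compatibility $\res^I_J\circ\sg^*=\bxi^*\circ\res^{I'}_{\sg(J)}$ and the duality $\ip{\al_0,\bxi^*(\om_0)}=\ip{\bxi_*(\al_0),\om_0}$ hold with matching sign conventions on both sides. Both are formal once one writes out the factorization $\sg\circ\iota_J=\iota_{\sg(J)}\circ\bxi$ and uses that the sign $(-1)^{m(m-1)/2}$ in the pairing depends only on the degree, not on the ambient set. All nontrivial content thus rests on Lemma~\ref{lem-star-adjoint}.
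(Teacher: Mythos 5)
Your proof is correct and follows essentially the same route as the paper's: both factor $\sg|_J$ as the induced surjection $J\to\sg(J)$ followed by the inclusion $\sg(J)\mra I'$, split the pairing into a constant exterior factor (handled by the duality defining $\sg_*$ on $\Lm^*(W^\vee)$) times a polynomial integral (handled by Lemma~\ref{lem-star-adjoint}). No substantive differences.
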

\begin{proof}
 We may assume that $\al=i_J(f\al_0)$ for some $J\sse I$ and some
 $f\in P_J$ and $\al_0\in\Lm^m(W^\vee_J)$.  Similarly, we may assume
 that $\om=g\om_0$ for some $g\in P_{I'}$ and $\om_0\in\Lm^m(W_{I'})$.
 Put $J'=\sg(J)$ and let $\sg'$ denote the surjective map $\sg\:I'\to
 J'$.  Put $f'=\sg'_*(f)\in P_{J'}$ and
 $\al'_0=\sg'_*(\al_0)\in\Lm^m(W_{J'})^\vee$.  Let $i\:J'\to I'$ be
 the inclusion, so that $i\sg'=\sg$.  Put $g'=i^*g$ and
 $\om'_0=i^*\om$.  From the definitions we then have
 \[ \iip{\sg_*(\al),\om}_{I'} = 
     \int_{J'} \ip{f'\al'_0,g'\om'_0} = 
      \ip{\al'_0,\om'_0} \int_{J'} f'g'.
 \]
 It is elementary that
 \[ \ip{\al'_0,\om'_0} = 
     \ip{\sg'_*(\al_0),i^*(\om_0)} = 
      \ip{\al_0,(\sg')^*i^*\om_0} = 
       \ip{\al_0,\sg^*\om_0}.
 \]
 Similarly, we see from Lemma~\ref{lem-star-adjoint} that
 \[ \int_{J'} f'g' = 
     \int_{J'} \sg'_*(f) i^*(g) = 
      \int_{I'} f\,.\,(\sg')^*i^*(g) = 
       \int_{I'} f\,\sg^*(g).
 \]
 The claim follows directly from this.
\end{proof}

\begin{corollary}\label{cor-star-chain}
 The map $\sg_*\:\Phi_{I,*}\to\Phi_{I',*}$ is a chain map and a
 quasiisomorphism. 
\end{corollary}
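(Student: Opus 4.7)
The plan is to deduce both assertions from results already on the table: Proposition~\ref{prop-star-adjoint} (the adjunction between $\sg_*$ and $\sg^*$ under the pairing $\iip{\cdot,\cdot}$), Proposition~\ref{prop-adjoint} (the adjunction between $\dl$ and $d$), Lemma~\ref{lem-xi-inj} (injectivity of $\xi$), and Proposition~\ref{prop-H-Phi} (the homology of $\Phi_{I,*}$).

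First I would verify the chain-map property. Given $\al\in\Phi_{I,d+1}$ and $\om\in\Om^d_{I'}$, I compute
\[
 \iip{\dl\sg_*(\al),\om}_{I'}
  = (-1)^{d+1}\iip{\sg_*(\al),d\om}_{I'}
  = (-1)^{d+1}\iip{\al,\sg^*d\om}_I,
\]
and using that $\sg^*\:\Om^*_{I'}\to\Om^*_I$ is a map of DGAs (hence commutes with $d$), this equals
\[
 (-1)^{d+1}\iip{\al,d\sg^*\om}_I = \iip{\dl\al,\sg^*\om}_I = \iip{\sg_*\dl\al,\om}_{I'}.
\]
Since this holds for every $\om$, injectivity of $\xi_{I'}$ (Lemma~\ref{lem-xi-inj}) forces $\dl\sg_*=\sg_*\dl$.

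Next I would handle the quasiisomorphism claim. By Proposition~\ref{prop-H-Phi}, both $H_*(\Phi_{I,*})$ and $H_*(\Phi_{I',*})$ are concentrated in degree zero, each freely generated over $\K$ by the common homology class of any element of the form $i_{\{k\}}(1)$. Fix any $j\in I$ (assuming $I$ is nonempty; the $I=\emptyset$ case is vacuous as $\Phi_{\emptyset,*}=0$). Unwinding Definition~\ref{defn-star} and Definition~\ref{defn-int-fibre}, the restriction of $\sg$ to the singleton $\{j\}$ is a bijection onto $\{\sg(j)\}$, and on $P_{\{j\}}=\K$ the pushforward $\sg_*$ fixes the constant $1$. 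Hence
\[
 \sg_*(i_{\{j\}}(1)) = i_{\{\sg(j)\}}(1),
\]
which represents the generator of $H_0(\Phi_{I',*})$. Thus $\sg_*$ induces an isomorphism on $H_0$, and it is trivially an isomorphism in all other degrees since both groups vanish there.

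The only step that requires any genuine verification is the triviality check $\sg_*(1)=1$ on the singleton, which is immediate from the formula $\sg_*(t^{[\nu]})=t^{[\sg_*(\nu+1)-1]}$ with $\nu=0$; everything else is a direct application of the earlier structural propositions. There is no real obstacle—the work was done in Proposition~\ref{prop-star-adjoint} and Proposition~\ref{prop-H-Phi}.
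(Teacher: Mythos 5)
Your proof is correct and follows essentially the same route as the paper: the chain-map property is the observation that $\sg_*$ is (via the injective chain map $\xi$) a restriction of the dual of the chain map $\sg^*\:\Om^*_{I'}\to\Om^*_I$, which is exactly what your pairing computation with Propositions~\ref{prop-adjoint} and~\ref{prop-star-adjoint} unpacks, and the quasiisomorphism claim is deduced from Proposition~\ref{prop-H-Phi}. You in fact supply a detail the paper leaves implicit, namely the check that $\sg_*(i_{\{j\}}(1))=i_{\{\sg(j)\}}(1)$ so that the generator of $H_0$ maps to the generator; this is a worthwhile addition.
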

\begin{proof}
 We can now identify the above map as a restriction of the map
 $\sg_*\:\hPhi_{I,*}\to\hPhi_{I',*}$, which is dual to the chain map
 $\sg^*\:\Om^*_{I'}\to\Om^*_I$ and so is itself a chain map.  It
 follows from Proposition~\ref{prop-H-Phi} that $\sg_*$ is also a
 quasiisomorphism. 
\end{proof}

\section{De Rham chains on a simplicial set}

We are now in a position to implement Definition~\ref{defn-PhiX}: a
simplicial set $X$ gives a functor $\DDl^\opp\tm\DDl\to\Ch$ by
$(n,m)\mapsto\Z[X_n]\ot\Phi_{[m],*}$, and we write $\Phi_*(X)$ for the
coend.  Thus $\Phi$ is a functor from simplicial sets to chain
complexes that preserves all colimits, and
$\Phi_*(\Dl_n)=\Phi_{[n],*}$, and these properties characterise
$\Phi_*(X)$.  An generator of $\Phi_d(X)$ can be written as $x\ot\al$
for some $x\in X_m$ and $\al\in\Phi_{[m],d}$, subject to the relations
that $x\ot\al$ is a $\K$-linear function of $\al$ and
$\rho^*(x)\ot\al=x\ot\rho_*(\al)$ for all $\rho\:[n]\to[m]$ and
$\al\in\Phi_{[n],d}$.  The differential is just
$\dl(x\ot\al)=x\ot\dl(\al)$. 

Recall that $\Om^d(X)$ is the set of maps $X_n\to\Om^d_{[n]}$ that are
natural for $[n]\in\DDl$.  There is a natural pairing 
\[ \iip{\cdot,\cdot}_X \: \Phi_d(X) \ot \Om^d(X) \to \K \]
given by $\iip{x\ot\al,\om}_X=\iip{\al,\om(x)}_{[m]}$ (for $x\in X_m$ and
$\al\in\Phi_{[m],d}$ and $\om\in\Om^d_X$).

\begin{definition}
 We write $\hPhi_*(X)=\Hom_\K(\Om^*(X),\K)$, so the above pairing
 gives a natural chain map $\xi\:\Phi_*(X)\to\hPhi_*(X)$.
\end{definition}

\begin{remark}\label{rem-Delta-rigid}
 In the rest of this paper, we will have a number of constructions
 related to $\Phi_{I,*}$ that depend on having a total order on $I$.
 If $I$ is totally ordered and $|I|=n+1$ then there is a unique
 order-preserving bijection between $I$ and $[n]=\{0,\dotsc,n\}$.
 Because of this, we can work with the sets $[n]$ where convenient,
 and we will transfer the results to all other finite ordered sets
 without explicit comment.
\end{remark}

We next compare $\Phi_*(X)$ with the usual normalised chain complex
$N_*(X)$.  (We recall the definition: an $n$-simplex $x\in X_n$ is
called \emph{degenerate} if it can be written as $\al^*y$ for some
$y\in X_m$ and some non-injective map $\al\in\DDl([n],[m])$, and
$N_n(X)$ is freely generated over $\K$ by the $n$-simplices modulo the
degenerate ones.)

\begin{proposition}\label{prop-phi-chain}
 There is a natural chain map $\phi\:N_*(X)\to\Phi_*(X)$ given by
 $\phi(x)=(-1)^nx\ot\tht_{[n]}\in\Phi_n(X)$ for all $x\in X_n$.
 (Here $\tht_{[n]}$ is as in Definition~\ref{defn-theta}.)
\end{proposition}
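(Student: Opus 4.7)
The plan is to verify three assertions: that $\phi$ vanishes on degenerate simplices (so descends to $N_*(X)$), that it intertwines the two differentials, and that it is natural in $X$. Naturality is immediate from the formula, so I focus on the first two.

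For well-definedness, define $\phi$ first on the free complex $\K[X_n]$ by $\phi(x)=(-1)^n x\ot\tht_{[n]}$, and show it vanishes on any simplex of the form $x=\al^*y$ with $\al\:[n]\to[m]$ non-injective and $y\in X_m$. Using the coend relation $\al^*(y)\ot\tht_{[n]}=y\ot\al_*(\tht_{[n]})$, this reduces to showing $\al_*(\tht_{[n]})=0$ in $\Phi_{[m],n}$. By Definition~\ref{defn-star}, $\al_*(\tht_{[n]})$ sits in $\Tht_{\al([n]),n}=P_{\al([n])}\ot\Lm^n(W_{\al([n])}^\vee)$; since $\al$ is non-injective, $|\al([n])|\leq n$, so $\dim_\K W_{\al([n])}\leq n-1$ and $\Lm^n(W_{\al([n])}^\vee)=0$. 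This forces $\al_*(\tht_{[n]})=0$, as required.

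For the chain-map property, compute directly. With $\partial x=\sum_{j=0}^n(-1)^j\dl_j^*x$ (where $\dl_j\:[n-1]\to[n]$ is the $j$-th face map), applying the coend relation gives
\[ \phi(\partial x) = (-1)^{n-1}\sum_{j=0}^n (-1)^j\,\dl_j^*x\ot\tht_{[n-1]}
     = (-1)^{n-1}\,x\ot\sum_{j=0}^n(-1)^j(\dl_j)_*(\tht_{[n-1]}). \]
By Remark~\ref{rem-dl-tht}, the inner sum equals $-\dl(\tht_{[n]})$, so
\[ \phi(\partial x) = (-1)^n\,x\ot\dl(\tht_{[n]}) = \dl\bigl((-1)^n x\ot\tht_{[n]}\bigr) = \dl(\phi(x)). \]

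The only place any care is needed is in sign bookkeeping, and the $(-1)^n$ in the definition of $\phi$ was inserted precisely to cancel the minus sign appearing in Remark~\ref{rem-dl-tht}. There is no genuine obstacle: the calculation of $\dl(\tht_{[n]})$ (Lemma~\ref{lem-dl-tht}) has already been done, and the identification of the two differentials then reduces to matching signs.
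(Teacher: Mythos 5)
Your argument is correct and essentially identical to the paper's own proof: both establish the chain-map identity by combining the coend relation with Remark~\ref{rem-dl-tht}, and both kill degenerate simplices by observing that $\sg_*(\tht_{[n]})$ lands in an exterior power of too high a degree. The only cosmetic difference is that you phrase the degeneracy argument for an arbitrary non-injective $\al$ rather than reducing to a surjection, which changes nothing.
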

\begin{proof}
 The formula $\phi(x)=(-1)^nx\ot\tht_{[n]}$ certainly defines a
 natural map $X_n\to\Phi_n(X)$ of sets, which extends linearly to give
 a map $\phi\:C_n(X)=\K\{X_n\}\to\Phi_n(X)$ of vector spaces.  We make
 $C_*(X)$ into a chain complex using the alternating sum of face maps
 in the usual way.  We claim that $\phi$ is then a chain map.  Indeed,
 we have 
 \[ \phi(d_ix)= \phi((\dl_i)^*x) = 
     (-1)^{n-1}(\dl_i)^*x\ot\tht_{[n-1]}=
      (-1)^{n-1}x\ot(\dl_i)_*\tht_{[n-1]}.
 \]
 By taking alternating sums and using Remark~\ref{rem-dl-tht} we
 obtain 
 \[ \phi(dx) =
     (-1)^n x\ot\left(-\sum_i(-1)^i(\dl_i)_*\tht_{[n-1]}\right)
      = (-1)^n x\ot\dl(\tht_{[n]}) = \dl(\phi(x)).
 \]
 Now suppose that $x$ is degenerate, say $x=\sg^*(y)$ for some
 surjective map $\sg\:[n]\to[m]$ with $m<n$.  Then
 $\phi(x)=\pm\sg^*(x)\ot\tht_{[n]}=\pm x\ot\sg_*(\tht_{[n]})$ and
 $\sg_*(\tht_{[n]})\in\Lm^n(W^\vee_{[m]})=0$ so $\phi(x)=0$.  There is
 thus an induced chain map $\phi\:N_*(X)\to\Phi_*(X)$ as claimed.
\end{proof}

\begin{proposition}\label{prop-Phi-split}
 There is a natural isomorphism of graded groups
 \[ \bigoplus_m N_m(X) \ot \Tht_{[m],d} \to \Phi_d(X). \]
 (The interaction with differentials is complicated and will not be
 made explicit.)
\end{proposition}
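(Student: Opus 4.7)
The plan is to combine the subset-decomposition $\Phi_{[n],d}=\bigoplus_{\emptyset\neq J\sse[n]}\Tht_{J,d}$ with the Eilenberg--Zilber decomposition of $\K X_\bullet$ in order to trim the coend defining $\Phi_d(X)$ down to a direct sum indexed by non-degenerate simplices of $X$.

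First I would use the coend relations associated to injections to put every generator into canonical form. For each non-empty $J\sse[n]$ with $|J|=m+1$, the unique order-preserving bijection $[m]\cong J$ produces an injection $\iota_J\:[m]\hookrightarrow[n]$ and an isomorphism $(\iota_J)_*\:\Tht_{[m],d}\xrightarrow{\cong}\Tht_{J,d}$; the coend relation $\iota_J^*x\ot\bt=x\ot(\iota_J)_*\bt$ then shows that every generator $x\ot i_J(\al)$ is equivalent to $(\iota_J^*x)\ot i_{[m]}(\bt)$ with $\bt=(\iota_J)_*^{-1}(\al)$. Thus $\Phi_d(X)$ is a quotient of $\bigoplus_m\K X_m\ot\Tht_{[m],d}$, and by applying the surjection--injection factorisation in $\DDl$ to the remaining coend relations one checks that only the surjection relations
\[ \sg^*y\ot i_{[m]}(\bt) \;=\; y\ot i_{[m']}(\sg_*\bt) \qquad (\sg\:[m]\twoheadrightarrow[m'],\ y\in X_{m'},\ \bt\in\Tht_{[m],d}) \]
survive.

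Next I would apply the Eilenberg--Zilber lemma: every $y\in X_m$ factors uniquely as $y=\mu^*z$ with $\mu\:[m]\twoheadrightarrow[k]$ and $z\in X_k$ non-degenerate. The surjection relation then identifies $y\ot i_{[m]}(\bt)$ with $z\ot i_{[k]}(\mu_*\bt)$, so every element of $\Phi_d(X)$ admits a representative supported on non-degenerate simplices. This yields the map
\[ \chi\:\bigoplus_m N_m(X)\ot\Tht_{[m],d}\longrightarrow\Phi_d(X),\qquad [z]\ot\bt\mapsto[z\ot i_{[m]}(\bt)], \]
which is surjective by the reduction just described. For the inverse, the assignment $y\ot i_{[m]}(\bt)\mapsto[z]\ot\mu_*\bt$ (where $y=\mu^*z$ is the Eilenberg--Zilber factorisation) is well-defined on the quotient by the surjection relations thanks to the functoriality $(\tau\mu)_*=\tau_*\mu_*$ (Remark~\ref{rem-covariant-functor}), and provides a two-sided inverse to $\chi$.

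The step I expect to be most delicate is verifying that the candidate inverse descends to the coend---equivalently, that when the surjection relation $\sg^*y\ot i_{[m]}(\bt)=y\ot i_{[m']}(\sg_*\bt)$ is imposed, applying the Eilenberg--Zilber reduction to the two sides yields the same element of $N_k(X)\ot\Tht_{[k],d}$. The key observation is that if $y=\mu^*z$ is the Eilenberg--Zilber factorisation in $X_{m'}$, then $\sg^*y=\sg^*\mu^*z=(\mu\sg)^*z$ is automatically the Eilenberg--Zilber factorisation in $X_m$ (by uniqueness), so both sides reduce to the same non-degenerate $z$; the $\Tht$-factors then agree because $(\mu\sg)_*\bt=\mu_*(\sg_*\bt)$.
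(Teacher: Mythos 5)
Your argument is correct, and it rests on the same two ingredients as the paper's proof --- the decomposition $\Phi_{[n],*}=\bigoplus_{\emptyset\neq J\sse[n]}\Tht_{J,*}$ and the Eilenberg--Zilber lemma (Lemma~\ref{lem-degen-split}) --- but the mechanism is genuinely different. You compute the coend by hand: the injection relations normalise every generator onto its ``top cell'' $\Tht_{[m],*}$, the surjection--injection factorisation in $\DDl$ shows that only the surjection relations remain, and the uniqueness clause of the Eilenberg--Zilber factorisation makes the explicit inverse $y\ot\bt\mapsto[z]\ot\mu_*\bt$ well defined. The paper instead observes that, forgetting differentials, $\Phi$ is the left Kan extension of $\Tht$ along the inclusion $\EE\to\DDl$ of the surjective morphisms, and then shows that $\Phi_*(X)$ and $\bigoplus_m N_m(X)\ot\Tht_{[m],*}$ corepresent the same functor on graded vector spaces, using the splitting $X_n\cong\coprod_m\EE(n,m)\tm\ND_m(X)$ together with the Yoneda lemma. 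The corepresentability route absorbs all the coend relations at once and so avoids the well-definedness checks that form the delicate part of your argument; your route pays for those checks but yields a completely explicit isomorphism. One presentational caution: your $\chi$ must be read as defined on $\K\{\ND_m(X)\}$ via the canonical splitting of $N_m(X)$, not as induced by $x\ot\bt\mapsto x\ot i_{[m]}(\bt)$ on all of $\K\{X_m\}$ --- for degenerate $x=\sg^*y$ one has $x\ot i_{[m]}(\bt)=y\ot i_{[m']}(\sg_*\bt)$, which is generally nonzero in $\Phi_d(X)$.
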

\begin{proof}
 Let $\EE$ be the subcategory of $\DDl$ which contains all the objects
 but only the surjective morphisms, and let $i\:\EE\to\DDl$ be the
 inclusion.  We find that $\Tht$ can be regarded as a functor from
 $\EE$ to the category $\CV_*$ of graded vector spaces over $\K$, and
 if we ignore the differential then $\Phi$ is just the left Kan
 extension $\colim_i\Tht$.  Now consider a simplicial set $X$ and an object
 $V_*\in\CV_*$.  We can define a functor $T\:\DDl\to\CV_*$ by
 $T_n=\Map(X_n,V_*)$ and from the universal properties of coends and
 Kan extensions we see that 
 \[ \CV_*(\Phi_*(X),V_*) = [\DDl,\CV_*](\Phi,T) = 
    [\DDl,\CV_*](\colim_i\Tht,T) = [\EE,\CV_*](\Tht,i^*T). 
 \]
 Now let $\ND_n(X)$ be the set of non-degenerate $n$-simplices in $X$.
 There is an evident map $\coprod_m\EE(n,m)\tm\ND_m(X)\to X_n$ sending
 $(\al,x')$ to $\al^*x'$, and it is a standard fact that this is
 bijective.  (The original reference is~\cite[8.3]{eizi:ssc}, and we
 have given a proof as Lemma~\ref{lem-degen-split} for convenience.)
 We therefore have $T_n=\prod_m\Map(\EE(n,m),T'_m)$, where
 $T'_m=\Map(X'_m,V_*)$.  It follows using the Yoneda Lemma that
 \[ [\EE,\CV_*](\Tht,i^*T) = \prod_m\CV_*(\Tht_{[m],*},T'_m) = 
     \prod_m\CV_*(\Z\{X'_m\}\ot\Tht_{[m],*},V_*) = 
     \CV_*(\bigoplus_m N_m(X)\ot\Tht_{[m],*},V_*). 
 \]
 We now see that $\Phi_d(X)$ and $\bigoplus_m N_m(X) \ot \Tht_{[m],d}$
 represent the same functor, so they are isomorphic in a canonical
 way. 
\end{proof}

\begin{proposition}\label{prop-N-Phi}
 The map $\phi_X\:N_*(X)\to\Phi_*(X)$ is a quasiisomorphism.
\end{proposition}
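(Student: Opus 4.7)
The plan is to build a filtration on $\Phi_*(X)$ whose spectral sequence collapses to $H_*(X;\K)$ at $E_2$, realizing $\phi$ as the inclusion of the bottom row at $E_1$. Using Proposition~\ref{prop-Phi-split}, I would define the increasing filtration
$$F_p \Phi_*(X) := \bigoplus_{m \leq p} N_m(X) \otimes \Tht_{[m],*}.$$
The first step is to verify that $\delta$ preserves this filtration. The piece $\delta'$ obviously acts within each summand $N_m(X) \otimes \Tht_{[m],*}$. For $\delta''$, which sends $x \otimes i_{[m]}(f\alpha_0)$ to a sum of terms of the form $x \otimes i_{[m]\setminus\{j\}}(\cdots)$, I would apply the coend relation $\rho^*x \otimes \alpha = x \otimes \rho_*\alpha$ with $\rho = \delta_j \colon [m-1] \hookrightarrow [m]$ to rewrite each such term as $d_j x \otimes \beta$ for some $\beta \in \Tht_{[m-1],*}$. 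If $d_j x$ is degenerate, a further application of the coend relation drops the term into a still lower summand $F_{m'}$ with $m' < m-1$; either way $\delta''$ strictly decreases filtration.

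With $F_*$ confirmed as a filtration by subcomplexes, I would run the associated spectral sequence. The $E_0$ page is $\operatorname{gr}_p = N_p(X) \otimes \Tht_{[p],*}$ with $d_0 = \delta'$; the lemma computing $H_*(\Tht_{[p],*};\delta') = \K\cdot\tht_{[p]}$ (in total degree $p$) then gives
$$E_1^{p,0} = N_p(X) \otimes \K\,\tht_{[p]}, \qquad E_1^{p,q} = 0 \text{ for } q \neq 0.$$
The differential $d_1$ is induced by the formula $\delta''(\tht_{[p]}) = -\sum_j (-1)^j (\delta_j)_*\tht_{[p-1]}$ of Remark~\ref{rem-dl-tht}; combined with the coend relation this yields $d_1(x \otimes \tht_{[p]}) = -\sum_j (-1)^j d_j x \otimes \tht_{[p-1]}$, which up to an overall sign is the simplicial boundary of $N_*(X)$. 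Therefore $E_2^{p,0} = H_p(X;\K)$ and the sequence collapses at $E_2$. Convergence is automatic: the filtration is exhaustive and bounded below in each total degree $n$, since $\Tht_{[m],n} = 0$ whenever $m < n$.

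Finally, I would compare with $N_*(X)$ viewed as a trivially filtered complex (filtered by its chain degree). The formula $\phi(x) = (-1)^n x \otimes \tht_{[n]}$ shows that $\phi$ carries $N_n(X)$ into $F_n\Phi_n(X)$ and, after passing to $\operatorname{gr}_n$ and then to $E_1^{n,0}$, becomes $(-1)^n \cdot \operatorname{id}$ onto the generator. Thus $\phi$ induces an isomorphism of $E_1$ pages intertwining $\partial$ and $d_1$ up to a sign, hence an isomorphism on $E_\infty$, so $\phi$ is a quasiisomorphism.

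The principal obstacle is the first step---confirming rigorously that $\delta''$ strictly decreases $F_*$ under the identification of Proposition~\ref{prop-Phi-split}, with particular care when $d_j x$ is itself degenerate and the coend relation must be invoked iteratively to place the output in the correct summand. Everything else is essentially a repackaging of Lemma~\ref{lem-dl-tht} and Remark~\ref{rem-dl-tht} into spectral-sequence language.
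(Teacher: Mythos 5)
Your argument is correct, but it takes a genuinely different route from the paper. The paper proves the statement by induction over skeleta: it checks the class of $X$ for which $\phi_X$ is a quasiisomorphism contains the simplices (Proposition~\ref{prop-H-Phi}), is closed under coproducts and filtered colimits, and is closed under attaching cells, the last step using Proposition~\ref{prop-Phi-split} only to see that $\Phi_*(-)$, being naturally a direct sum of the exact functors $N_m(-)$, carries the skeletal pushout squares to short exact sequences; a five-lemma argument then finishes. You instead run a single global spectral sequence for the filtration of $\Phi_*(X)$ by simplicial degree, which is in effect the double-complex argument the paper uses to prove Proposition~\ref{prop-H-Phi} for $X=\Dl_n$, carried out for arbitrary $X$ at once. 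Your key verification does go through: on a generator $x\ot i_{[m]}(\al)$ with $x$ nondegenerate, $\dl'$ preserves the summand and $\dl''$ lands in $F_{m-1}$; moreover the worry about iterating the coend relation when $d_jx$ is degenerate is moot at the $E_1$ level, since writing $d_jx=\sg^*y$ with $\sg$ a proper surjection gives $d_jx\ot\tht_{[p-1]}=y\ot\sg_*\tht_{[p-1]}=0$ because $\sg_*$ kills the top exterior power (exactly as in the proof of Proposition~\ref{prop-phi-chain}), so the $E_1$ differential is the normalized boundary on the nose. The trade-off is clear: the paper's route deliberately sidesteps the ``complicated'' interaction of $\dl$ with the splitting of Proposition~\ref{prop-Phi-split}, at the price of the skeletal induction and colimit bookkeeping; your route requires that filtration analysis but yields the isomorphism $H_*\Phi_*(X)\cong H_*(X;\K)$ and the identification of $\phi$ on $E_1$ in one self-contained computation, with convergence guaranteed since $\Tht_{[m],d}=0$ for $m<d$ makes the filtration bounded below in each total degree.
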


\begin{remark}
 The case where $X$ is a point is easy.  One way to prove the general
 case would be to show that the functor $H_*\Phi_*(X)$ is homotopy
 invariant, has Mayer-Vietoris sequences, and preserves filtered
 colimits; then the claim would reduce to the usual uniqueness
 argument for homology theories.  Our proof will be slightly
 different; we will rearrange the uniqueness proof so as not to rely
 on homotopy invariance, which instead we deduce as a byproduct.
\end{remark}

\begin{proof}
 Put $\CX=\{X\st \phi_X \text{ is a quasiisomorphism }\}$; we must
 show that this contains all simplicial sets.  It is easy to see that
 $\CX$ is closed under coproducts and filtered colimits.
 Proposition~\ref{prop-H-Phi} tells us that $\Dl_n\in\CX$ for all
 $n$.  Now let $Z$ be an $n$-dimensional simplicial set, and suppose
 inductively that all $(n-1)$-dimensional simplicial sets lie in
 $\CX$.  Let $Y$ be the $(n-1)$-skeleton of $Z$, so we have a pushout
 square of the form 
 \[ \xymatrix{
     A\tm\partial\Dl_n \ar@{ >->}[r]^i \dto_f & A\tm\Dl_n \dto^g \\
     Y \ar@{ >->}[r]_j & Z
    }
 \]
 for some set $A$.  This in turn gives a diagram
 \[ \xymatrix{
     N_*(A\tm\partial\Dl_n) \rto \dto_\phi &
     N_*(A\tm\Dl_n)\op N_*(Y) \rto \dto_{\phi\op\phi} &
     N_*(Z) \dto^\phi \\
     \Phi_*(A\tm\partial\Dl_n) \rto &
     \Phi_*(A\tm\Dl_n)\op N_*(Y) \rto &
     \Phi_*(Z).
    }
 \]
 It is standard that the top row is short exact (giving a
 Mayer-Vietoris sequence in ordinary homology).  Using
 Proposition~\ref{prop-Phi-split} we see that $\Phi_n(X)$ can be split
 naturally as a direct sum of functors of the form $N_m(X)$ for
 various $m$, and it follows that the bottom row is also short exact.
 The first two vertical maps are quasiisomorphisms by the induction
 hypothesis and Proposition~\ref{prop-H-Phi}.  It follows that
 $\phi_Z$ must also be a quasiisomorphism, so $Z\in\CX$.  By induction
 on dimension and passage to colimits we see that $\CX$ contains all
 simplicial sets, as required.
\end{proof}

\subsection{Monoidal properties}

We now define natural maps
$\mu_{X,Y}\:\Om^*(X)\ot\Om^*(Y)\to\Om^*(X\tm Y)$ and
$\mu_{X,Y}\:\Phi_*(X)\ot\Phi_*(Y)\to\Phi_*(X\tm Y)$, in several
stages.

The cohomological version is straightforward.
\begin{definition}\label{defn-mult-Om}
 Given $\om\in\Om^d(X)$ and $\up\in\Om^e(Y)$ we define 
 $\om\wedge\up$ to be the composite 
 \[ X_n\tm Y_n\xra{\om\wedge\up}
    \Om^d_{[n]}\tm\Om^e_{[n]} \xra{\text{mult}}
    \Om^{d+e}_{[n]}.
 \]
 This is natural for $n\in\DDl$ and so gives
 $\om\wedge\up\in\Om^{d+e}(X\tm Y)$.  This construction makes
 $\Om$ into a symmetric monoidal functor from simplicial sets to
 cochain complexes.
\end{definition}

For the homological version, we need to use the set $\Sg(n,m)$ of
$(n,m)$-shuffles; see Appendix~\ref{apx-simplicial} for details of our
approach to this, and various other preliminaries about the simplicial
category.

\begin{definition}\label{defn-s-w}
 In the ring $P_{[n]}=\K[t_0,\dotsc,t_n]/(1-\sum_it_i)$ we put
 $s_i=\sum_{j<i}t_j$, so that $s_0=0$ and $s_{n+1}=1$ and
 $P_{[n]}=\K[s_1,\dotsc,s_n]$.  This gives a basis
 $\{ds_1,\dotsc,ds_n\}$ for $W_{[n]}$.  Recall that $\tW^\vee_{[n]}$
 has basis $e_0,\dotsc,e_n$, and that $W_{[n]}^\vee$ is the subspace
 spanned by the differences $e_i-e_j$.  We put $w_i=e_{i-1}-e_i$, and
 observe that $w_1,\dotsc,w_n$ is a basis for $W_{[n]}^\vee$, with
 $\ip{w_i,s_j}=\dl_{ij}$. 
\end{definition}

The following observation is immediate from the definitions.
\begin{lemma}\label{lem-star-s}
 If $\al\:[n]\to[m]$ is surjective then $\al^*(s_i)=s_{\al^\dag(i)}$
 and so $\al^*(ds_i)=ds_{\al^\dag(i)}$. \qed
\end{lemma}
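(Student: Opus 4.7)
The plan is a direct unwinding of definitions. First I would expand $\al^*(s_i)$ using the definition of $s_i$ as $\sum_{j<i}t_j$ and the formula $\al^*(t_j)=\sum_{\al(k)=j}t_k$ from Section~\ref{sec-de-rham}. Since the inner sums over different $j$ are disjoint, this collapses to
\[ \al^*(s_i) = \sum_{j<i}\sum_{\al(k)=j} t_k = \sum_{k\,:\,\al(k)<i} t_k. \]

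The key observation is then purely combinatorial: because $\al\:[n]\to[m]$ is a morphism in $\DDl$ it is order-preserving, and being also surjective means that the preimage of any initial segment $\{0,1,\dotsc,i-1\}\sse[m]$ is itself an initial segment of $[n]$. According to the convention for $\al^\dag$ recalled in Appendix~\ref{apx-simplicial}, this initial segment is precisely $\{0,1,\dotsc,\al^\dag(i)-1\}$ (so $\al^\dag(i)$ is the smallest $k$ with $\al(k)\geq i$; the boundary cases $i=0$ and $i=n+1$ are consistent with $s_0=0$ and $s_{n+1}=1$). Substituting gives
\[ \al^*(s_i) = \sum_{k<\al^\dag(i)} t_k = s_{\al^\dag(i)}. \]

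The statement for $ds_i$ is immediate: $\al^*$ is a map of DGAs (by its construction in Section~\ref{sec-de-rham}), so $\al^*(ds_i)=d\al^*(s_i)=ds_{\al^\dag(i)}$.

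There is no real obstacle; the only nontrivial point is confirming that the appendix's definition of $\al^\dag$ does match ``smallest index in the preimage of the corresponding initial segment'', but this is a matter of cross-referencing notation rather than a step that requires argument.
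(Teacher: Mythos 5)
Your proof is correct and is exactly the ``immediate from the definitions'' verification that the paper leaves unwritten: expand $\al^*(s_i)=\sum_{k:\al(k)<i}t_k$ and use that, for a nondecreasing surjection, the smallest $k$ with $\al(k)\geq i$ equals $\min\{k:\al(k)=i\}=\al^\dag(i)$, so the sum is $s_{\al^\dag(i)}$. The $ds_i$ case via $\al^*$ commuting with $d$ is also as intended.
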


\begin{lemma}\label{lem-shuffle-iso}
 If $(\zt,\xi)\in\Sg(n,m)$, then the resulting maps
 \begin{align*}
  W_{[n]} \oplus W_{[m]} &\xra{(\zt^*,\xi^*)} W_{[n+m]} \\
  \Lm^*(W_{[n]})\ot \Lm^*(W_{[m]}) &\xra{\zt^*\ot\xi^*}
   \Lm^*(W_{[n+m]})\ot \Lm^*(W_{[n+m]}) 
     \xra{\text{mult}} \Lm^*(W_{[n+m]}) \\
  P_{[n]}\ot P_{[m]} &\xra{\zt^*\ot\xi^*} P_{[n+m]}\ot P_{[n+m]} 
     \xra{\text{mult}} P_{[n+m]} \\
  \Om^*_{[n]}\ot \Om^*_{[m]} &\xra{\zt^*\ot\xi^*} \Om^*_{[n+m]}\ot \Om^*_{[n+m]} 
     \xra{\text{mult}} \Om^*_{[n+m]}
 \end{align*}
 are isomorphisms.  (We will write $\mu_{\zt\xi}$ for any of these maps.)
\end{lemma}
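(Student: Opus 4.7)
The plan is to reduce all four statements to the first one, and to reduce the first one to the combinatorial property that distinguishes a shuffle from an arbitrary pair of surjections.

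First I would fix the bases. By Definition~\ref{defn-s-w}, $W_{[n]}$ has the basis $ds_1,\dotsc,ds_n$, and similarly for $W_{[m]}$ and $W_{[n+m]}$. By Lemma~\ref{lem-star-s}, the map $\zt^*\:W_{[n]}\to W_{[n+m]}$ sends $ds_i$ to $ds_{\zt^\dag(i)}$, and analogously for $\xi^*$. The defining property of an $(n,m)$-shuffle (as recalled in Appendix~\ref{apx-simplicial}) is that $\zt$ and $\xi$ are surjections whose ``dagger'' lifts $\zt^\dag\:\{1,\dotsc,n\}\to\{1,\dotsc,n+m\}$ and $\xi^\dag\:\{1,\dotsc,m\}\to\{1,\dotsc,n+m\}$ are injective with complementary images. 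Therefore the combined map $(\zt^*,\xi^*)\:W_{[n]}\oplus W_{[m]}\to W_{[n+m]}$ carries the ordered basis of the source to a permutation of the ordered basis of the target, and is in particular a linear isomorphism.

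With this established, the other three statements are formal. For the exterior algebras, the general identity $\Lm^*(V\op W)\cong\Lm^*(V)\ot\Lm^*(W)$, realised by the multiplication map in $\Lm^*(V\op W)$, combines with the first isomorphism to give the second. For the polynomial rings, $P_{[n]}\ot P_{[m]}$ is the polynomial ring on the disjoint union of two generating sets $\{s_i\}$ and $\{s_j'\}$, and the multiplied map sends these bijectively (via $\zt^\dag$ and $\xi^\dag$) onto the generators $\{s''_k\}$ of $P_{[n+m]}$, so it is an isomorphism of polynomial rings. For the de Rham algebras, $\Om^*_{[n]}=P_{[n]}\ot_\K\Lm^*(W_{[n]})$ as graded algebras, so the map in question factors as the tensor product of the polynomial and exterior isomorphisms already obtained.

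The only non-formal input is the shuffle property itself, so the main thing to check is that Appendix~\ref{apx-simplicial} really does formalise $(n,m)$-shuffles in such a way that $\img(\zt^\dag)$ and $\img(\xi^\dag)$ partition $\{1,\dotsc,n+m\}$; if the appendix instead encodes shuffles as sequences of east/north steps or as monotone injections $[n+m]\to[n]\tm[m]$, a short translation is required. Everything else is bookkeeping with bases.
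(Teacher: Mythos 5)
Your proof is correct and follows the same route as the paper: the paper's proof cites Lemma~\ref{lem-shuffle-coprod} (which is exactly the statement that $\zt^\dag$ and $\xi^\dag$ give a coproduct decomposition of $[n+m]'$, resolving the caveat in your last paragraph) together with Lemma~\ref{lem-star-s}, and leaves the remaining bookkeeping implicit. Your write-up just makes that bookkeeping explicit.
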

\begin{proof}
 The maps
 \[ [n]' \xra{\zt^\dag} [n+m]' \xla{\xi^\dag} [m]' \]
 give a coproduct decomposition by Lemma~\ref{lem-shuffle-coprod}.
 The claim follows using Lemma~\ref{lem-star-s}.
\end{proof}

\begin{definition}\label{defn-bullet}
 Given a nondecreasing surjective map $\sg\:[n]\to[m]$, we define
 $\sg^\bullet\:W^\vee_{[m]}\to W^\vee_{[n]}$ by
 $\sg^\bullet(w_j)=w_{\sg^\dag(i)}$.  We also write $\sg^\bullet$ for
 $\Lm^k(\sg^\bullet)\:\Lm^*(W^\vee_{[m]})\to\Lm^*(W^\vee_{[n]})$ or
 for 
 \[ \sg^*\ot\sg^\bullet\:\Tht_{[m],*}=P_{[m]}\ot\Lm^*(W^\vee_{[m]})
     \to P_{[n]}\ot \Lm^*(W^\vee_{[n]}) = \Tht_{[n],*}.
 \]
\end{definition} 
\begin{remark}\label{rem-bullet}
 One can check directly from the definitions that
 $\ip{\sg^\bullet(\al),\sg^*(\om)}_{[n]}=\ip{\al,\om}_{[m]}$ and
 $\sg^*(u)\im\sg^\bullet(\al)=\sg^\bullet(u\im\al)$.
\end{remark}

\begin{definition}
 Given a shuffle $(\zt,\xi)\:[n+m]\to[n]\tm[m]$ we define an
 isomorphism 
 \[ \mu_{\zt\xi} \: \Lm^*(W_{[n]}^\vee)\ot\Lm^*(W_{[m]}^\vee) \to
     \Lm^*(W_{[n+m]}^\vee)
 \]
 by $\mu_{\zt\xi}(\al\ot\bt)=\zt^\bullet(\al)\wedge\xi^\bullet(\bt)$.
 We also extend this to an isomorphism
 $\Tht_{[n],*}\ot\Tht_{[m],*}\to\Tht_{[n+m],*}$ by putting
 \[ \mu_{\zt\xi}(f\al_0\ot g\bt_0)=
     \zt^*(f)\xi^*(g)\zt^\bullet(\al)\wedge\xi^\bullet(\bt).
 \]
\end{definition}

\begin{lemma}\label{lem-mu-Theta}
 For all $\al\in\Tht_{[n],d}$ and $\bt\in\Tht_{[m],e}$ and
 $\om\in\Om^d_{[n]}$ and $\up\in\Om^e_{[m]}$ we have
 \[ \ip{\mu_{\zt\xi}(\al\ot\bt),\mu_{\zt\xi}(\om\ot\up)}_{[n+m]} = 
    \ip{\mu_{\zt\xi}(\al\ot\bt),\zt^*(\om)\wedge\xi^*(\up)}_{[n+m]} = 
     (-1)^{|\bt||\om|}\zt^*(\ip{\al,\om}_{[n]})\xi^*(\ip{\bt,\up}_{[m]}).
 \]
 Moreover, the following diagram commutes:
 \[ \xymatrix{
     \Tht_{[n],*}\ot\Tht_{[m],*}
      \drto_{\mu_{\zt\xi}} \rrto^{\tau} & & 
     \Tht_{[m],*}\ot\Tht_{[n],*} 
      \dlto^{\mu_{\xi\zt}} \\
     & \Tht_{[n+m],*}
    }
 \]
 (Here $\tau$ is the usual twist map $\tau(a\ot b)=(-1)^{|a||b|}b\ot a$.)
\end{lemma}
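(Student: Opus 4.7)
The plan is to reduce to pure tensors and then exploit the shuffle decomposition to make the pairing matrix block diagonal.  First, by $\K$-linearity in each slot and $P$-linearity, we may assume $\al = f\al_0$ and $\bt = g\bt_0$ with $\al_0 \in \Lm^d(W_{[n]}^\vee)$, $\bt_0 \in \Lm^e(W_{[m]}^\vee)$, and similarly $\om = g'\om_0$, $\up = h'\up_0$ with $\om_0, \up_0$ pure.  The middle equality in the first display is then immediate from the definition of $\mu_{\zt\xi}$ on forms (Lemma~\ref{lem-shuffle-iso}), so only the outer equality needs work.

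For the outer equality, I want to apply the determinant formula for the pairing $\ip{\cdot,\cdot}_{[n+m]}$ to
\[ \zt^\bullet(\al_0)\wedge\xi^\bullet(\bt_0) \quad\text{and}\quad \zt^*(\om_0)\wedge\xi^*(\up_0). \]
By Lemma~\ref{lem-shuffle-coprod} (the coproduct decomposition of $[n+m]'$), the two sets $\zt^\dag([n]')$ and $\xi^\dag([m]')$ partition $[n+m]'$, and then by the definitions of $\sg^\bullet$ and Lemma~\ref{lem-star-s} the elements $\zt^\bullet(w_i), \xi^\bullet(w_j)$ form a partition of the basis $\{w_1,\dots,w_{n+m}\}$ of $W^\vee_{[n+m]}$ and similarly $\zt^*(ds_i), \xi^*(ds_j)$ partition the dual basis.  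Using $\ip{w_k,s_\ell}_{[n+m]} = \dl_{k\ell}$, the cross-pairings between the $\zt$-part and $\xi$-part vanish, so the $(d+e)\tm(d+e)$ matrix in the determinant formula for $\ip{\zt^\bullet(\al_0)\wedge\xi^\bullet(\bt_0),\zt^*(\om_0)\wedge\xi^*(\up_0)}_{[n+m]}$ is block diagonal, and its determinant factors as a product.  Combined with Remark~\ref{rem-bullet} (which gives $\ip{\sg^\bullet(a),\sg^*(b)} = \ip{a,b}$) this produces $\ip{\al_0,\om_0}_{[n]}\,\ip{\bt_0,\up_0}_{[m]}$ multiplied by the sign discrepancy
\[ (-1)^{(d+e)(d+e-1)/2}\,(-1)^{-d(d-1)/2}\,(-1)^{-e(e-1)/2} = (-1)^{de}, \]
which matches the claimed $(-1)^{|\bt||\om|}$.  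Restoring the scalar factors and using $\zt^*(f)\xi^*(g)\zt^*(g')\xi^*(h') = \zt^*(fg')\,\xi^*(gh')$ yields the stated equality.

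For the commuting triangle I simply compute both sides on $\al\ot\bt$.  One has
\[ \mu_{\zt\xi}(\al\ot\bt) = \zt^*(f)\xi^*(g)\,\zt^\bullet(\al_0)\wedge\xi^\bullet(\bt_0), \]
while
\[ \mu_{\xi\zt}(\tau(\al\ot\bt)) = (-1)^{de}\,\mu_{\xi\zt}(\bt\ot\al) = (-1)^{de}\,\xi^*(g)\zt^*(f)\,\xi^\bullet(\bt_0)\wedge\zt^\bullet(\al_0). \]
Graded commutativity of the wedge product gives $\xi^\bullet(\bt_0)\wedge\zt^\bullet(\al_0) = (-1)^{de}\,\zt^\bullet(\al_0)\wedge\xi^\bullet(\bt_0)$, so the two expressions coincide.

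The only real subtlety is sign bookkeeping, specifically the comparison of the single Koszul sign $(-1)^{(d+e)(d+e-1)/2}$ built into $\ip{\cdot,\cdot}_{[n+m]}$ against the product of the two smaller Koszul signs built into $\ip{\cdot,\cdot}_{[n]}$ and $\ip{\cdot,\cdot}_{[m]}$.  Once the block-diagonal structure of the pairing matrix is identified, everything else is routine.
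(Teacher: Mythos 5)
Your proof is correct. The paper itself leaves this lemma to the reader, so there is no official argument to compare against, but yours is the natural one and the details check out: the coproduct decomposition $[n+m]'=\zt^\dag([n]')\amalg\xi^\dag([m]')$ does make the cross-pairings $\ip{\zt^\bullet(w_p),\xi^*(ds_q)}$ vanish, the matrix is genuinely block diagonal, and the exponent $(d+e)(d+e-1)/2-d(d-1)/2-e(e-1)/2=de$ gives exactly the claimed $(-1)^{|\bt||\om|}$; the triangle then follows from graded commutativity of the wedge exactly as you say.
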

\begin{proof}
 Left to the reader.
\end{proof}

\begin{definition}\label{defn-shuffle-sign}
 We let $\sgn(\zt,\xi)\in\{\pm 1\}$ be the number such that
 \[ \mu_{\zt\xi}(\tht_{[n]}\ot\tht_{[m]})=\sgn(\zt,\xi)\tht_{[n+m]}. \]
\end{definition}

We now recall the standard way to make $N_*$ into a symmetric monoidal
functor (see for example~\cite[Section 29]{ma:soa}).
\begin{definition}\label{defn-shuffle-product}
 We define a map $\mu\:N_n(X)\ot N_m(Y)\to N_{n+m}(X\tm Y)$ (called the
 \emph{shuffle product}) by 
 \[ \mu(x\ot y) =
     \sum_{(\zt,\xi)\in\Sg(n,m)} \sgn(\zt,\xi) (\zt^*(x),\xi^*(y)).
 \]
\end{definition}

There are a number of known generalisations of this construction; for
example, the same formula gives a well-behaved map 
$R_n\ot R_m\to R_{n+m}$ for any simplicial ring $R_\bullet$.  As far
as we understand it, none of these generalisations can be applied
directly to our situation, but nonetheless we can give a definition
along the same lines.

\begin{definition}\label{defn-mult-Phi}
 We define $\mu\:\Phi_*(X)\ot\Phi_*(Y)\to\Phi_*(X\tm Y)$ by 
 \[ \mu((x\ot\al_1)\ot(y\ot\bt_1)) =
     \sum_{(\zt,\xi)\in\Sg(n,m)}
      (\zt^*x,\xi^*y)\ot\mu_{\zt,\xi}(\al_1\ot \bt_1)
 \]
 for $x\in X_n$, $y\in Y_m$, $\al_1\in\Tht_{[n],*}$ and
 $\bt_1\in\Tht_{[m],*}$.  To see that this is well-defined and has good
 properties, we repeat the definition in a more long-winded form as
 follows.  We note that a shuffle $(\zt,\xi)$ gives a nondegenerate
 $(n+m)$-simplex $x_{\zt\xi}\in(\Dl_n\tm\Dl_m)_{n+m}$, and thus a
 basis element in $N_{n+m}(\Dl_n\tm\Dl_m)$.  We then define
 \[ \mu\:\Tht_{[n],*}\ot\Tht_{[m],*} \to
     \Phi_*(\Dl_n\ot\Dl_m) =
      \bigoplus_d N_d(\Dl_n\tm\Dl_m)\ot\Tht_{[d],*}
 \]
 by  
 \[ \mu(\al_1\ot\bt_1) =
     \sum_{\zt,\xi}x_{\zt\xi}\ot\mu_{\zt\xi}(\al_1\ot\bt_1).
 \]
 By
 a slight change of notation, if $J$ and $K$ are any finite, nonempty,
 totally ordered sets we get natural maps
 $\mu\:\Tht_{J,*}\ot\Tht_{K,*}\to\Phi_*(\Dl_J\tm\Dl_K)$.  If
 $J\sse[n]$ and $K\sse[m]$ then $\Dl_J\tm\Dl_K\sse\Dl_n\tm\Dl_m$, so
 we get a map $\mu\:\Tht_{J,*}\ot\Tht_{K,*}\to\Phi_*(\Dl_n\tm\Dl_m)$.
 Adding these up over all $J$ and $K$, we get a map
 $\mu\:\Phi_{[n],*}\ot\Phi_{[m],*}\to\Phi_*(\Dl_n\tm\Dl_m)$, which is
 a natural transformation of functors $\DDl\tm\DDl\to\Ch$.  Given
 simplicial sets $X$ and $Y$ we have functors
 $(\DDl\tm\DDl)^\opp\tm\DDl\tm\DDl\to\CV_*$ given by 
 \[ (p,q,n,m) \mapsto \Z\{X_p\tm X_q\}\ot\Phi_{[n],*}\ot\Phi_{[m],*}
 \]
 and
 \[ (p,q,n,m) \mapsto \Z\{X_p\tm X_q\}\ot\Phi_*(\Dl_n\tm\Dl_m). \]
 The coend of the first is $\Phi_*(X)\ot\Phi_*(Y)$, whereas the coend
 of the second is $\Phi_*(X\tm Y)$.  The maps $\mu$ therefore induce a
 well-defined map $\Phi_*(X)\ot\Phi_*(Y)\to\Phi_*(X\tm Y)$.
\end{definition}

\begin{proposition}
 The maps $\mu\:\Phi_*(X)\ot\Phi_*(Y)\to\Phi_*(X\tm Y)$ make $\Phi$ a
 symmetric monoidal functor from simplicial sets to graded vector
 spaces. 
\end{proposition}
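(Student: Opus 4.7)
The plan is to reduce the verification of all the monoidal coherence axioms to calculations on the standard simplices. Since $\Phi_*$ is defined as a coend in $X$ it preserves colimits of simplicial sets, and the product $X\tm Y$ preserves colimits separately in each variable, as does the graded tensor product; so both $\Phi_*(X)\ot\Phi_*(Y)$ and $\Phi_*(X\tm Y)$ are colimit-preserving separately in $X$ and $Y$. It therefore suffices to check naturality, associativity, symmetry, and the unit axiom in the universal case where the arguments are standard simplices. Naturality is moreover automatic from the construction of $\mu$ as a coend of a natural transformation of functors on $\DDl\tm\DDl$.

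The unit object is $\K$ concentrated in degree zero, identified with $\Phi_*(*)=\Phi_{1,*}$ via Theorem~\ref{thm-Phi}(c). The set $\Sg(0,m)$ contains only the shuffle $(\sg_0,\mathrm{id})$, where $\sg_0\:[m]\to[0]$ is the unique map, and unwinding the definitions of $\sg_0^*$ and $\sg_0^\bullet$ shows that $\mu_{\sg_0,\mathrm{id}}$ is the identity on $\Tht_{[m],*}$, which gives the left unit condition; the right unit is symmetric. For the symmetry axiom, the involution $(\zt,\xi)\mapsto(\xi,\zt)$ gives a bijection $\Sg(n,m)\cong\Sg(m,n)$ compatible with the twist $\Dl_n\tm\Dl_m\cong\Dl_m\tm\Dl_n$, and the second half of Lemma~\ref{lem-mu-Theta} supplies precisely the identity $\mu_{\xi\zt}\circ\tau=\mu_{\zt\xi}$ needed to match the $\Tht$-components. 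Both checks therefore reduce to already-established lemmas.

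Associativity is the main technical step. Expanding both $\mu_{X\tm Y,Z}\circ(\mu_{X,Y}\ot 1)$ and $\mu_{X,Y\tm Z}\circ(1\ot\mu_{Y,Z})$ on an element $(x\ot\al)\ot(y\ot\bt)\ot(z\ot\gm)$ with $x\in(\Dl_n)_n$, $y\in(\Dl_m)_m$, $z\in(\Dl_p)_p$ produces sums indexed respectively by $\Sg(n,m)\tm\Sg(n+m,p)$ and $\Sg(n,m+p)\tm\Sg(m,p)$. The classical bijection of each of these with the set of $(n,m,p)$-triple shuffles aligns the two indexing sets, and for each triple shuffle the simplicial components $(\zt_1^*x,\zt_2^*y,\zt_3^*z)$ coincide on both sides. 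What remains is the $\Tht$-level identity
\[
\mu_{\zt'\xi'}\bigl(\mu_{\zt\xi}(\al\ot\bt)\ot\gm\bigr)
 =\mu_{\eta'\rho'}\bigl(\al\ot\mu_{\eta\rho}(\bt\ot\gm)\bigr)
\]
whenever the two shuffle pairs correspond to the same triple shuffle. This follows from the associativity of the wedge product, the functoriality $(\tau\sg)^*=\sg^*\tau^*$ and $(\tau\sg)^\bullet=\sg^\bullet\tau^\bullet$ attached to Definition~\ref{defn-bullet}, and the compatibilities recorded in Remark~\ref{rem-bullet}, after suitably bracketing and reordering the tensor factors in the defining formula for $\mu_{\zt\xi}$.

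The hard part will be the sign-tracking in this last step. The $\Tht$-pieces carry signs both from the pairing convention of Remark~\ref{rem-sign} (implicit in Definition~\ref{defn-bullet}) and from interchanging factors of $\Lm^*(W^\vee)$ across the two nested shuffles, so verifying that the two bracketings give literally the same element of $\Tht_{[n+m+p],*}$, rather than elements differing by a sign, requires care. The combinatorial side — the bijection with triple shuffles — is by contrast a standard piece of simplicial topology, essentially the same bookkeeping that underlies associativity of the classical Eilenberg--Zilber shuffle product on $N_*$.
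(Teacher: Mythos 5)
Your proposal is correct and follows essentially the same route as the paper: associativity is reduced via the triple-shuffle bijections of Lemma~\ref{lem-operad} to a common three-fold product $\mu_{\zt\xi\tht}$ on the $\Tht$-factors, and symmetry comes from the twist diagram in Lemma~\ref{lem-mu-Theta}. The sign-tracking you flag as the hard part is in fact vacuous, since $\mu_{\zt\xi}$ is defined by $\zt^\bullet(\al)\wedge\xi^\bullet(\bt)$ without reordering any factors, so functoriality of $(\cdot)^\bullet$ and strict associativity of $\wedge$ give the two bracketings on the nose.
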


We would also like to know that $\mu$ is a chain map, but the proof of
that fact is long so we will do it separately in
Proposition~\ref{prop-mu-chain}. 

\begin{proof}
 First, for any $(m,n,p)$-shuffle $(\zt,\xi,\tht)$ we can define 
 \[ \mu_{\zt\xi\tht}\:
     \Tht_{[m],*}\ot\Tht_{[n],*}\ot\Tht_{[p],*} \to 
      \Tht_{[m+n+p],*}
 \]
 by the evident analogue of Lemma~\ref{lem-shuffle-iso}.  Using this,
 we define 
 \[ \mu_3\:\Phi_*(X)\ot\Phi_*(Y)\ot\Phi_*(Z)\to\Phi_*(X\tm Y\tm Z) \]
 by 
 \[ \mu_3(x\ot\al_1\ot y\ot\bt_1\ot z\ot\gm_1) = 
     \sum_{\zt,\xi,\tht}
      (\zt^*(x),\xi^*(y),\tht^*(z))\ot
       \mu_{\zt\xi\tht}(\al_1\ot\bt_1\ot\gm_1).
 \]
 Using Lemma~\ref{lem-operad} we see that 
 \[ \mu\circ(\mu\ot 1) = \mu_3 = \mu\circ(1\ot\mu) \:
     \Phi_*(X)\ot\Phi_*(Y)\ot\Phi_*(Z)\to\Phi_*(X\tm Y\tm Z),
 \]
 so we have made $\Phi_*$ into a monoidal functor.  It follows from
 the diagram in Lemma~\ref{lem-mu-Theta} that $\mu$ is also compatible
 with the relevant twist maps, so $\Phi_*$ is a symmetric monoidal
 functor.  
\end{proof}

\begin{proposition}\label{prop-mu-adjoint}
 The maps $\mu\:\Phi_*(X)\ot\Phi_*(Y)\to\Phi_*(X\tm Y)$ and the maps
 $\wedge\:\Om^*(X)\ot\Om^*(Y)\to\Om^*(X\tm Y)$ satisfy 
 \[ \iip{\mu(\al\ot\bt),\om\wedge\up} = 
    (-1)^{|\bt||\om|}\iip{\al,\om}\iip{\bt,\up}.
 \]
\end{proposition}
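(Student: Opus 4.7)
My strategy is to unfold both sides on a single generator, use Lemma~\ref{lem-mu-Theta} to eliminate the shuffle maps $\mu_{\zt\xi}$, and reduce to a Fubini-type integration identity on the product of algebraic simplices. First, by bilinearity I may assume $\al$ and $\bt$ are single generators of the coend. Any such generator has the form $x\ot i_J(\gm_J)$ with $x\in X_n$, $J\sse[n]$ and $\gm_J\in\Tht_{J,*}$; letting $\rho\:[k]\to[n]$ be the order-preserving injection with image $J$ (viewed as a morphism in $\DDl$) and choosing the unique $\al_1\in\Tht_{[k],*}$ with $\rho_*(\al_1)=i_J(\gm_J)$ in $\Phi_{[n],*}$, the defining coend relation $x\ot\rho_*(\al_1)=\rho^*(x)\ot\al_1$ rewrites the generator as $\rho^*(x)\ot\al_1$ with $\rho^*(x)\in X_k$. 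After renaming, I may assume $\al=x\ot\al_1$ with $x\in X_n$ and $\al_1\in\Tht_{[n],*}$, and similarly $\bt=y\ot\bt_1$ with $y\in Y_m$ and $\bt_1\in\Tht_{[m],*}$. A degree count shows both sides vanish unless $|\al_1|=|\om|$ and $|\bt_1|=|\up|$, so this matching may also be assumed.

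Using Definition~\ref{defn-mult-Phi} and the definition of $\iip{\cdot,\cdot}_{X\tm Y}$, the left-hand side unwinds to
\[
\iip{\mu(\al\ot\bt),\om\wedge\up} =
\sum_{(\zt,\xi)\in\Sg(n,m)}
\iip{\mu_{\zt\xi}(\al_1\ot\bt_1),\,(\om\wedge\up)(\zt^*x,\xi^*y)}_{[n+m]}.
\]
By Definition~\ref{defn-mult-Om} and naturality of simplicial differential forms, $(\om\wedge\up)(\zt^*x,\xi^*y)=\zt^*(\om(x))\wedge\xi^*(\up(y))$ in $\Om^*_{[n+m]}$. Lemma~\ref{lem-mu-Theta} then evaluates the inner $P_{[n+m]}$-valued pairing as $(-1)^{|\bt||\om|}\zt^*(f)\cdot\xi^*(g)$, where $f=\ip{\al_1,\om(x)}_{[n]}\in P_{[n]}$ and $g=\ip{\bt_1,\up(y)}_{[m]}\in P_{[m]}$. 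Since $\iip{\al,\om}=\int_{[n]}f$ and $\iip{\bt,\up}=\int_{[m]}g$, integrating via Definition~\ref{defn-xi} reduces the proposition to the combinatorial identity
\[
\sum_{(\zt,\xi)\in\Sg(n,m)} \int_{[n+m]} \zt^*(f)\cdot\xi^*(g)
 = \Bigl(\int_{[n]}f\Bigr)\Bigl(\int_{[m]}g\Bigr)
\qquad (f\in P_{[n]},\ g\in P_{[m]}).
\]

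This Fubini-over-shuffles identity is the main obstacle. Since both sides are $\K$-linear in the coefficients of $f$ and $g$ with universal rational coefficients, it suffices to verify it for $\K=\R$. There Remark~\ref{rem-int} identifies each $\int_I$ with integration against the natural measure on the geometric simplex $\Dl_I$, and the family of affine embeddings $(\zt^*,\xi^*)\:\Dl_{[n+m]}\to\Dl_{[n]}\tm\Dl_{[m]}$ indexed by $(n,m)$-shuffles realises the classical triangulation of the product simplex into $\binom{n+m}{n}$ top-dimensional pieces, intersecting only in their boundaries, and is measure-preserving for the natural measures. Hence the left-hand side equals $\int_{\Dl_{[n]}\tm\Dl_{[m]}}\pi_1^*(f)\cdot\pi_2^*(g)$, which is the product of the two integrals by ordinary Fubini. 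A direct combinatorial alternative, valid over any $\K$ of characteristic zero, is to verify the identity monomial-by-monomial against Definition~\ref{defn-int}, where it becomes a standard multinomial identity closely related to the computation in Lemma~\ref{lem-star-adjoint}.
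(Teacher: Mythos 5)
Your proof is correct and follows essentially the same route as the paper: reduce to generators $x\ot\al_1$, $y\ot\bt_1$ with $\al_1\in\Tht_{[n],*}$, $\bt_1\in\Tht_{[m],*}$, apply Lemma~\ref{lem-mu-Theta} termwise over shuffles, and reduce to the identity $\sum_{(\zt,\xi)}\int_{[n+m]}\zt^*(f)\xi^*(g)=\int_{[n]}f\cdot\int_{[m]}g$. The only difference is that you prove this last identity inline (via the shuffle triangulation of $\Dl_{[n]}\tm\Dl_{[m]}$), whereas the paper quotes it as Lemma~\ref{lem-int-prod}, whose appendix proof is the same triangulation argument.
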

\begin{proof}
 We may assume that $\al=x\ot\al_1$ and $\bt=y\ot\bt_1$ for some
 $x\in X_n$, $y\in Y_m$, $\al_1\in\Tht_{[n],d}$ and
 $\bt_1\in\Tht_{[m],e}$.  For a nonzero result we must then have
 $\om\in\Om^d(X)$ and $\up\in\Om^e(Y)$, so we can put
 $\om_1=\om(x)\in\Om^d_{[n]}$ and $\up_1=\up(y)\in\Om^e_{[m]}$.  We
 then put $f=\ip{\al_1,\om_1}\in P_{[n]}$ and
 $g=\ip{\bt_1,\up_1}\in P_{[m]}$, so that
 $\iip{\al,\om}=\int_{[n]}f$ and $\iip{\bt,\up}=\int_{[m]}g$.

 Using Lemma~\ref{lem-int-prod} we see that
 \[ \iip{\al,\om}\iip{\bt,\up} 
     = \int_{[n]} f \cdot \int_{[m]} g \\
     = \sum_{(\zt,\xi)\in\Sg(n,m)} \int_{[n+m]} \zt^*(f)\xi^*(g).
 \]
 On the other hand, we have 
 \[ \mu(\al\ot\bt) =
     \sum_{(\zt,\xi)}
      (\zt^*(x),\xi^*(y))\ot\mu_{\zt\xi}(\al_1\ot\bt_1).
 \]
 Here
 \begin{align*}
  \ip{(\zt^*(x),\xi^*(y))\ot\mu_{\zt\xi}(\al_1\ot\bt_1),
      \om\wedge\up}
   &= \ip{\mu_{\zt\xi}(\al_1\ot\bt_1),
          \om(\zt^*(x))\wedge\up(\xi^*(y))} \\
   &= \ip{\mu_{\zt\xi}(\al_1\ot\bt_1),
          \zt^*\om_1\wedge\xi^*\up_1} \\
   &= (-1)^{|\bt||\om|}\zt^*(\ip{\al_1,\om_1})\xi^*(\ip{\bt_1,\up_1}) 
    = (-1)^{|\bt||\om|}\zt^*(f)\,\xi^*(g).
 \end{align*}
 The claim follows.
\end{proof}

\begin{proposition}
 The square
 \[ \xymatrix{
      N_n(X)\ot N_m(Y) \rto^\mu \dto_{\phi\ot\phi} &
      N_{n+m}(X\tm Y) \dto^\phi \\
      \Phi_n(X)\ot\Phi_m(Y) \rto_\mu &
      \Phi_{n+m}(X\tm Y)
    }
 \]
 is commutative.
\end{proposition}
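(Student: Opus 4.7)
The plan is simply to chase a generator $x\ot y$ with $x\in X_n$, $y\in Y_m$ around the square and observe that the two composites agree on the nose, by virtue of the way the sign $\sgn(\zt,\xi)$ was chosen in Definition~\ref{defn-shuffle-sign}.

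Going right then down, the shuffle product gives
\[
 \mu(x\ot y)=\sum_{(\zt,\xi)\in\Sg(n,m)}\sgn(\zt,\xi)\,(\zt^*x,\xi^*y)\in N_{n+m}(X\tm Y),
\]
and then $\phi$ multiplies each term by $(-1)^{n+m}$ and tensors with $\tht_{[n+m]}$. Going down then right, we have $\phi(x)\ot\phi(y)=(-1)^{n+m}(x\ot\tht_{[n]})\ot(y\ot\tht_{[m]})$, and applying $\mu$ (as in Definition~\ref{defn-mult-Phi}) yields
\[
 (-1)^{n+m}\sum_{(\zt,\xi)\in\Sg(n,m)}(\zt^*x,\xi^*y)\ot\mu_{\zt\xi}(\tht_{[n]}\ot\tht_{[m]}).
\]
By Definition~\ref{defn-shuffle-sign} we have $\mu_{\zt\xi}(\tht_{[n]}\ot\tht_{[m]})=\sgn(\zt,\xi)\,\tht_{[n+m]}$, so this matches the previous expression term by term.

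There is no real obstacle here: the only thing to notice is that $\sgn(\zt,\xi)$ was \emph{defined} precisely so that $\mu_{\zt\xi}$ carries $\tht_{[n]}\ot\tht_{[m]}$ to $\sgn(\zt,\xi)\tht_{[n+m]}$, which is exactly the sign appearing in the shuffle product on $N_*$. So the proof reduces to a two-line calculation citing Definitions~\ref{defn-shuffle-product}, \ref{defn-mult-Phi} and~\ref{defn-shuffle-sign}.
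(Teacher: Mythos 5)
Your proof is correct and is essentially identical to the paper's own argument: both chase the generator $x\ot y$ around the square and match the two composites term by term using the defining property $\mu_{\zt\xi}(\tht_{[n]}\ot\tht_{[m]})=\sgn(\zt,\xi)\tht_{[n+m]}$ from Definition~\ref{defn-shuffle-sign}.
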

\begin{proof}
 Suppose we have $x\in X_n$ and $y\in Y_m$.  Then 
 \begin{align*}
  \mu(x\ot y) &= 
    \sum_{\zt,\xi}\sgn(\zt,\xi)(\zt^*(x),\xi^*(y)) \\
  \phi\mu(x\ot y) &= 
    (-1)^{n+m} \sum_{\zt,\xi}
     \sgn(\zt,\xi)(\zt^*(x),\xi^*(y))\ot\tht_{[n+m]} \\
  &= (-1)^{n+m} \sum_{\zt,\xi}
     (\zt^*(x),\xi^*(y))\ot\mu_{\zt\xi}(\tht_{[n]}\ot\tht_{[m]}) \\
  &= (-1)^{n+m}\mu((x\ot\tht_{[n]})\ot(y\ot\tht_{[m]})) \\
  &= \mu(\phi\ot\phi)(x\ot y).
 \end{align*}
\end{proof}

\begin{proposition}\label{prop-mu-chain}
 The map $\mu\:\Phi_*(X)\ot\Phi_*(Y)\to\Phi_*(X\tm Y)$ is a chain
 map. 
\end{proposition}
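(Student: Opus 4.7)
Plan.

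The natural strategy is adjointness: by Proposition~\ref{prop-adjoint}, $\xi$ intertwines $\dl$ with $d$ up to a sign, and by Proposition~\ref{prop-mu-adjoint}, $\mu$ is compatible with $\wedge$ under the pairings. By the coend construction of $\Phi_*(X)$ and the naturality of $\mu$ in both arguments, it is enough to verify the chain-map identity $\dl \mu = \mu \circ (\dl \ot 1 + (-1)^{\bullet} 1 \ot \dl)$ for the universal map $\mu \: \Tht_{[n],*} \ot \Tht_{[m],*} \to \Phi_*(\Dl_n \tm \Dl_m)$ for each pair $n,m$.

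I would evaluate both sides on a test form $\om \wedge \up \in \Om^*(\Dl_n \tm \Dl_m)$ with $\om \in \Om^*(\Dl_n)$ and $\up \in \Om^*(\Dl_m)$. On the LHS, since $\xi_{\Dl_n \tm \Dl_m}$ is itself a chain map (this follows from Proposition~\ref{prop-adjoint} by naturality of the coend construction), we have
\[
 \iip{\dl \mu(\al_1 \ot \bt_1), \om \wedge \up}
  = (-1)^{|\om|+|\up|+1}\iip{\mu(\al_1 \ot \bt_1), d(\om \wedge \up)}.
\]
Using the Leibniz rule $d(\om \wedge \up) = d\om \wedge \up + (-1)^{|\om|} \om \wedge d\up$ and Proposition~\ref{prop-mu-adjoint}, this splits into two scalar pairings $\iip{\al_1, d\om}\iip{\bt_1, \up}$ and $\iip{\al_1, \om}\iip{\bt_1, d\up}$ with explicit signs. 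For the RHS, apply Proposition~\ref{prop-mu-adjoint} directly to $\mu(\dl \al_1 \ot \bt_1)$ and $\mu(\al_1 \ot \dl \bt_1)$, then use Proposition~\ref{prop-adjoint} on each factor separately to move the $\dl$'s onto the forms. A routine sign-check shows that the two expressions agree for every $\om, \up$.

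The principal obstacle is the final step: the above argument only shows that the difference $\dl \mu(\al_1 \ot \bt_1) - \mu((\dl \ot 1 + (-1)^{|\al_1|} 1 \ot \dl)(\al_1 \ot \bt_1)) \in \Phi_*(\Dl_n \tm \Dl_m)$ pairs to zero with every product form $\om \wedge \up$, but such forms do not span $\Om^*(\Dl_n \tm \Dl_m)$, so a separation argument is still required. Using the shuffle decomposition $\mu(\al_1 \ot \bt_1) = \sum_{(\zt,\xi)} x_{\zt\xi} \ot \mu_{\zt\xi}(\al_1 \ot \bt_1)$ together with Proposition~\ref{prop-Phi-split}, both sides can be written as explicit sums indexed by non-degenerate simplices of $\Dl_n \tm \Dl_m$; on each component $\Tht_{J,*}$ corresponding to such a simplex, a judiciously chosen product form pairs nontrivially with only that component, and Lemma~\ref{lem-xi-inj} then forces vanishing. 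The alternative route, probably the author's intended one given the warning that the proof is long, is to bypass adjointness and verify the identity by a direct combinatorial expansion using the shuffle formula, the bigrading $\dl = \dl' + \dl''$, and the face-map description of $\dl''$ from Remark~\ref{rem-dl-tht}, matching terms through the combinatorics of shuffle faces.
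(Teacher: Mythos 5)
Your strategy (dualise everything via $\xi$ and the pairings, then separate components) is genuinely different from the paper's, which establishes the Leibniz rule $\dl\mu(\al\ot\bt)=\mu(\dl\al\ot\bt)+(-1)^{|\al|}\mu(\al\ot\dl\bt)$ in $\Phi_*(\Dl_n\tm\Dl_m)$ by direct combinatorial expansion: Lemma~\ref{lem-delta-prime} handles $\dl'$ using the Leibniz rule for $d$ and interior products, and for $\dl''$ the codimension-one restrictions of shuffles are classified into diagonal, horizontal and vertical gaps, with the diagonal terms cancelling in pairs (Lemma~\ref{lem-diagonal-gap}) and the other two families assembling into the two terms of the Leibniz rule (Lemmas~\ref{lem-horizontal-gap} and~\ref{lem-vertical-gap}). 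You correctly identify the obstacle to the dual approach --- product forms $\om\wedge\up$ do not span $\Om^*(\Dl_n\tm\Dl_m)$ --- but your proposed resolution of that obstacle does not work as stated, and this is a genuine gap.

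Concretely, the claim that ``a judiciously chosen product form pairs nontrivially with only that component'' is false in general. Already for $n=m=1$, the difference you need to kill a priori has components indexed by the two triangles of the square and by their common diagonal edge; any product form $g\ot h$ whose pullback to the diagonal edge is nonzero automatically has nonzero pullback to both adjacent triangles, since product forms are restrictions of globally polynomial data and cannot vanish on a $2$-simplex while being nonzero on one of its faces. So no product form isolates the diagonal component. Moreover, Lemma~\ref{lem-xi-inj} is proved only for $\Phi_{I,*}=\Phi_*(\Dl_I)$ and cannot be invoked for $\Phi_*(\Dl_n\tm\Dl_m)$. What your argument actually requires is that the map $\Phi_*(\Dl_n\tm\Dl_m)\to\Hom(\Om^*(\Dl_n)\ot\Om^*(\Dl_m),\K)$ induced by $\iip{-,\om\wedge\up}$ is injective. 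This is plausibly true, but proving it needs a genuinely new argument --- for instance, over $\K=\R$ one can express the pairing as integration of polynomial densities over the distinct affine sub-simplices of the prism indexed by nondegenerate simplices, and use that products $g\ot h$ are dense in continuous functions on the prism together with the mutual singularity of the measures carried by distinct faces, ordering the components by dimension as in the proof of Lemma~\ref{lem-xi-inj}. Until a statement of this kind is established, the duality route does not close, whereas the paper's direct expansion avoids the issue entirely.
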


The proof will follow after a number of preparatory results.

Recall that $\dl$ was defined in Definition~\ref{defn-delta} as the
sum of two operators $\dl'$ and $\dl''$.
\begin{lemma}\label{lem-delta-prime}
 For $\al=f\al_0\in\Tht_{[n],*}\leq\Phi_{[n],*}=\Phi_*(\Dl_n)$ and
 $\bt=g\bt_0\in\Tht_{[m],*}\leq\Phi_{[m],*}=\Phi_*(\Dl_m)$ we
 have 
 \[ \dl'(\mu(\al\ot\bt))=\mu(\dl'(\al)\ot\bt +
                          (-1)^{|\al|}\mu(\al\ot\dl'(\bt))
                           \in\Phi_*(\Dl_n\tm\Dl_m).
 \]
\end{lemma}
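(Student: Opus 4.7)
The plan is to reduce the identity to a shuffle-by-shuffle calculation in $\Tht_{[n+m],*}$, then run a Leibniz-style computation that exploits the disjoint-variable structure provided by a shuffle. By the defining formula of $\mu$,
$$\mu(f\al_0\ot g\bt_0) = \sum_{(\zt,\xi)\in\Sg(n,m)} x_{\zt\xi}\ot\mu_{\zt\xi}(f\al_0\ot g\bt_0),$$
so $\mu(\al\ot\bt)$ sits inside the summand $\bigoplus_{(\zt,\xi)}\K\{x_{\zt\xi}\}\ot\Tht_{[n+m],*}$ of the splitting of Proposition~\ref{prop-Phi-split}, and $\dl'$ acts simplex-wise as $x\ot\gamma\mapsto x\ot\dl'(\gamma)$. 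Hence it suffices to prove, for each fixed shuffle $(\zt,\xi)\in\Sg(n,m)$, the identity
$$\dl'(\mu_{\zt\xi}(f\al_0\ot g\bt_0)) = \mu_{\zt\xi}(\dl'(f\al_0)\ot g\bt_0) + (-1)^{|\al|}\mu_{\zt\xi}(f\al_0\ot\dl'(g\bt_0))$$
in $\Tht_{[n+m],*-1}$.

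Write $F=\zt^*(f)$, $G=\xi^*(g)$, $A=\zt^\bullet(\al_0)$, $B=\xi^\bullet(\bt_0)$, so that $\mu_{\zt\xi}(f\al_0\ot g\bt_0) = FG\cdot A\wedge B$. By Definition~\ref{defn-delta} and the ordinary Leibniz rule $d(FG)=G\,dF+F\,dG$,
$$\dl'(FG\cdot A\wedge B) = -d(FG)\im(A\wedge B) = -G(dF\im(A\wedge B)) - F(dG\im(A\wedge B)).$$
Applying the derivation property of interior multiplication (Lemma~\ref{lem-int-mult}(b)) expands each of these as two further summands, producing four terms in total.

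The main obstacle is showing that two of these four terms vanish, via a disjoint-variable argument that is the crux of the proof. By Lemma~\ref{lem-shuffle-coprod}, the maps $[n]'\xra{\zt^\dag}[n+m]'\xla{\xi^\dag}[m]'$ form a coproduct, so $\zt^\dag([n]')$ and $\xi^\dag([m]')$ partition $[n+m]'$. By Lemma~\ref{lem-star-s}, $F$ is a polynomial in $\{s_k:k\in\zt^\dag([n]')\}$, whence $dF$ lies in the $P_{[n+m]}$-span of $\{ds_k:k\in\zt^\dag([n]')\}$; dually, Definition~\ref{defn-bullet} gives $\zt^\bullet(w_j)=w_{\zt^\dag(j)}$, so $A$ is a wedge of $w_i$'s with $i\in\zt^\dag([n]')$, and analogously $G,B$ are supported on $\xi^\dag([m]')$. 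The pairing $\ip{ds_k,w_i}=\dl_{ki}$ of Definition~\ref{defn-s-w} combined with Lemma~\ref{lem-int-mult}(a) yields $ds_k\im w_i=-\dl_{ki}$, so the disjointness of the supports forces $dF\im B=0$ and $dG\im A=0$. What remains is
$$\dl'(FG\cdot A\wedge B) = -G(dF\im A)\wedge B - (-1)^{|A|}F\,A\wedge(dG\im B).$$
To finish, I invoke Remark~\ref{rem-bullet} extended $P$-linearly from basis vectors to general $1$-forms, which gives $dF\im A=\zt^\bullet(df\im\al_0)$ and $dG\im B=\xi^\bullet(dg\im\bt_0)$. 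Since $\dl'(f\al_0)=-df\im\al_0$, $\dl'(g\bt_0)=-dg\im\bt_0$, and $|A|=|\al_0|=|\al|$, the two surviving terms are precisely $\mu_{\zt\xi}(\dl'(f\al_0)\ot g\bt_0)$ and $(-1)^{|\al|}\mu_{\zt\xi}(f\al_0\ot\dl'(g\bt_0))$, yielding the shuffle-wise identity and hence the lemma.
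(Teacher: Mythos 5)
Your proposal is correct and follows essentially the same route as the paper: reduce to a fixed shuffle, apply the Leibniz rule to $d(\zt^*(f)\xi^*(g))$, kill the cross terms using the disjointness of $\zt^\dag([n]')$ and $\xi^\dag([m]')$, and convert the survivors via Remark~\ref{rem-bullet}. You have merely spelled out the vanishing of the cross terms, which the paper's displayed computation leaves implicit.
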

\begin{proof}
 Let $(\zt,\xi)$ be a shuffle.  Using Remark~\ref{rem-bullet} we see
 that 
 \begin{align*}
  \dl'\mu_{\zt\xi}(\al\ot\bt) 
   &= \dl'(\zt^*(f)\zt^\bullet(\al_0)\wedge\xi^*(g)\xi^\bullet(\bt_0)) \\
   &= -d(\zt^*(f)\xi^*(g))\im
         (\zt^\bullet(\al_0)\wedge\xi^\bullet(\bt_0)) \\
   &= -(\xi^*(g)\zt^*(df)+\zt^*(f)\xi^*(dg))\im
         (\zt^\bullet(\al_0)\wedge\xi^\bullet(\bt_0)) \\
   &= -\zt^\bullet(df\im\al_0)\xi^\bullet(\bt) 
      -(-1)^{|\al|}\zt^\bullet(\al)\wedge\xi^\bullet(dg\im\bt_0) \\
   &= \mu_{\zt\xi}(\dl'(\al)\ot\bt +
       (-1)^{|\al|}\mu_{\zt\xi}(\al\ot\dl'(\bt)).
 \end{align*}
 Taking the sum over all shuffles $(\zt,\xi)$ gives the claimed
 result. 
\end{proof}

We now start to consider the $\dl''$ terms.  

Consider an element $k\in[n+m]$ and an injective map
$(\zt,\xi)\:[n+m]\sm\{k\}\to[n]\tm[m]$.  We say that this pair is
\emph{extendable} if there exists a shuffle 
$(\phi,\psi)\:[n+m]\to[n]\tm[m]$ extending $(\zt,\xi)$.  We will need
to classify the possible extensions.  We first suppose that 
$0<k<n+m$.  In that case, extendability means precisely that one of
the following three things must hold.
\begin{itemize}
 \item[(0)] For some $(i,j)\in[n]'\tm[m]'$ we have
  $(\zt,\xi)(k-1)=(i-1,j-1)$ and $(\zt,\xi)(k+1)=(i,j)$.  Here we say
  that $(\zt,\xi)$ has a \emph{diagonal gap}.  There are two possible
  extensions, given by $(\phi,\psi)(k)=(i-1,j)$ and
  $(\phi,\psi)(k)=(i,j-1)$.
 \item[(1)] For some $(i,j)\in\{1,\dotsc,n-1\}\tm[m]$ we have
  $(\zt,\xi)(k-1)=(i-1,j)$ and $(\zt,\xi)(k+1)=(i+1,j)$.  Here we say
  that $(\zt,\xi)$ has a \emph{horizontal gap}.  There is only one
  possible extension, given by $(\phi,\psi)(k)=(i,j)$.
 \item[(2)] For some $(i,j)\in [n]\tm\{1,\dotsc,m-1\}$ we have
  $(\zt,\xi)(k-1)=(i,j-1)$ and $(\zt,\xi)(k+1)=(i,j+1)$.  Here we say
  that $(\zt,\xi)$ has a \emph{vertical gap}.  There is only one
  possible extension, given by $(\phi,\psi)(k)=(i,j)$.
\end{itemize}
The situation is similar if $k=0$, but with some slight adjustments.
We must have either $(\zt,\xi)(1)=(1,0)$ or $(\zt,\xi)(1)=(0,1)$
(otherwise there is not room for $(\zt,\xi)$ to be injective).  In
these cases we say that $(\zt,\xi)$ has a horizontal (resp. vertical)
gap.  Either way, there is a unique extension, with
$(\phi,\psi)(0)=(0,0)$.  Similarly, if $k=n+m$ then we can have only a
horizontal or vertical gap, and there is a unique extension given by
$(\phi,\psi)(n+m)=(n,m)$.  

(This division into three cases is the same as in the well-known proof
that the product in Definition~\ref{defn-shuffle-product} is a chain
map.)

Given an extendable pair $(\zt,\xi)$ and an extension $(\phi,\psi)$, 
the expression $\mu(f\al_0\ot g\bt_0)\in\Phi_*(\Dl_n\tm\Dl_n)$
contains a term $(\phi,\psi)\ot\mu_{\phi\psi}(f\al_0\ot g\bt_0)$, 
so $\dl''\mu(f\al_0\ot g\bt_0)$ contains a term
$-(\zt,\xi)\ot\rho_{\phi\psi}$, where
\[ \rho_{\phi\psi} =
    \res^{[n+m]}_{[n+m]\sm\{k\}}(\phi^*(f)\psi^*(g))
     (dt_k\im\mu_{\phi\psi}(\al_0\ot\bt_0)) =
      \zt^*(f)\xi^*(g)
      (dt_k\im\mu_{\phi\psi}(\al_0\ot\bt_0)).
\]

\begin{lemma}\label{lem-diagonal-gap}
 Suppose that $(\zt,\xi)\:[n+m]\sm\{k\}\to[n]\tm [m]$ has a diagonal
 gap between $(i-1,j-1)$ and $(i,j)$, and let $(\phi,\psi)$ and
 $(\ov{\phi},\ov{\psi})$ be the two shuffles that extend $(\zt,\xi)$.
 Then for any $\al_0\in\Lm^*(W_{[n]}^\vee$ and
 $\bt_0\in W_{[m]}^\vee$ we have
 \[ dt_k\im\mu_{\phi\psi}(\al_0\ot\bt_0) \;\;+\;\; 
    dt_k\im\mu_{\ov{\phi}\ov{\psi}}(\al_0\ot\bt_0) = 0.
 \] 
\end{lemma}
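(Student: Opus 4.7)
The plan is to reduce to a direct cancellation in $\Lm^*(W^\vee_{[n+m]})$, driven by the antisymmetry $w_{k+1}\wedge w_k = -w_k\wedge w_{k+1}$ and the elementary action of $dt_k\im(-)$ on the basis $\{w_1,\dotsc,w_{n+m}\}$. Specifically, $w_\ell = e_{\ell-1}-e_\ell$ together with Lemma~\ref{lem-int-mult}(a) gives $dt_k\im w_k = 1$, $dt_k\im w_{k+1} = -1$, and $dt_k\im w_\ell = 0$ for every other index, so $dt_k\im$ annihilates anything in the subalgebra generated by $\{w_\ell:\ell\ne k,k+1\}$.

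I would first compare $\phi^\dag$ with $\ov{\phi}^\dag$, and $\psi^\dag$ with $\ov{\psi}^\dag$, using the formula $\sg^\dag(r) = \min\{\ell:\sg(\ell)\ge r\}$ implicit in Lemma~\ref{lem-star-s}. Since $(\phi,\psi)$ and $(\ov{\phi},\ov{\psi})$ agree away from position $k$, and since $\phi(k)=i-1$, $\ov{\phi}(k)=i$, $\psi(k)=j$, $\ov{\psi}(k)=j-1$, a short case analysis yields $\phi^\dag(r)=\ov{\phi}^\dag(r)$ for $r\ne i$, with $\phi^\dag(i)=k+1$ and $\ov{\phi}^\dag(i)=k$; moreover $\phi^\dag$ never attains $k$ (because $\phi$ does not jump at $k$) and $\ov{\phi}^\dag$ never attains $k+1$. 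Symmetrically, $\psi^\dag$ and $\ov{\psi}^\dag$ agree off $s=j$, with $\psi^\dag(j)=k$, $\ov{\psi}^\dag(j)=k+1$, and $\psi^\dag$ (resp.\ $\ov{\psi}^\dag$) misses $k+1$ (resp.\ $k$). Translating through Definition~\ref{defn-bullet}, $\phi^\bullet$ and $\ov{\phi}^\bullet$ coincide on every basis vector of $W^\vee_{[n]}$ except $w_i$, where $\phi^\bullet(w_i)=w_{k+1}$ and $\ov{\phi}^\bullet(w_i)=w_k$; dually $\psi^\bullet(w_j)=w_k$ and $\ov{\psi}^\bullet(w_j)=w_{k+1}$.

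Next I would decompose $\al_0 = w_i\wedge A + B$ and $\bt_0 = w_j\wedge C + D$, with $A,B$ not involving $w_i$ and $C,D$ not involving $w_j$. Setting $\tilde A = \phi^\bullet(A) = \ov{\phi}^\bullet(A)$ and likewise $\tilde B,\tilde C,\tilde D$, the previous step guarantees that none of $\tilde A,\tilde B,\tilde C,\tilde D$ involves $w_k$ or $w_{k+1}$, so all four are annihilated by $dt_k\im$. Expanding
\[ \mu_{\phi\psi}(\al_0\ot\bt_0) + \mu_{\ov{\phi}\ov{\psi}}(\al_0\ot\bt_0) = (w_{k+1}\wedge\tilde A + \tilde B)\wedge(w_k\wedge\tilde C + \tilde D) + (w_k\wedge\tilde A + \tilde B)\wedge(w_{k+1}\wedge\tilde C + \tilde D), \]
the $\tilde A\wedge\tilde C$-contributions, after commuting $w_k,w_{k+1}$ to the front with matching signs, acquire the factor $w_{k+1}\wedge w_k + w_k\wedge w_{k+1} = 0$ and drop out, leaving $(w_k+w_{k+1})\wedge\tilde A\wedge\tilde D + \tilde B\wedge(w_k+w_{k+1})\wedge\tilde C + 2\tilde B\wedge\tilde D$.

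Finally, applying $dt_k\im(-)$ via the Leibniz rule of Lemma~\ref{lem-int-mult}(b), the term $2\tilde B\wedge\tilde D$ vanishes outright, and each of the other two summands picks up the scalar factor $dt_k\im(w_k+w_{k+1}) = 1+(-1) = 0$. This gives the stated identity. The principal obstacle is purely combinatorial: correctly pinning down exactly where $\phi^\dag$ and $\ov{\phi}^\dag$ (and the analogous $\psi^\dag,\ov{\psi}^\dag$) agree and disagree near the index $i$ (resp.\ $j$); once that bookkeeping is in place, the exterior-algebra cancellation is essentially forced by antisymmetry.
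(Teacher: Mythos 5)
Your proof is correct and takes essentially the same route as the paper's: the same decomposition $\al_0=w_i\wedge A+B$, $\bt_0=w_j\wedge C+D$, the same bookkeeping showing that $\phi^\bullet$ and $\ov{\phi}^\bullet$ (resp.\ $\psi^\bullet$ and $\ov{\psi}^\bullet$) differ only on $w_i$ (resp.\ $w_j$), sending it to $w_k$ and $w_{k+1}$ in opposite orders while all other basis vectors land outside $\{w_k,w_{k+1}\}$, and the same final cancellation via $dt_k\im(w_k+w_{k+1})=0$ together with antisymmetry. The only cosmetic differences are that your labeling of the two extensions is the opposite of the paper's (immaterial, as the statement is symmetric in them) and that you evaluate $dt_k\im$ directly on the $w_\ell$ rather than via $dt_k=ds_{k+1}-ds_k$.
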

\begin{proof}
 Write $\al_0$ as $\al_1+w_i\wedge\al_2$, where $\al_1$ and $\al_2$
 involve only the generators $w_p$ with $p\neq i$.  In particular,
 this means that $dt_i\im\al_0=-\al_2$.  Write $\bt_0$ as
 $\bt_1+w_j\wedge\bt_2$ in the same way.  

 As there is a diagonal gap, we must have $0<k<n+m$.  We have the
 following table of values: 
 \[ \begin{array}{|c|c|c|c|c|}
     \hline
         & \phi & \ov{\phi} & \psi & \ov{\psi} \\ \hline
     k-1 &  i-1 &  i-1      & j-1  &  j-1      \\ \hline
     k   &  i   &  i-1      & j-1  &  j        \\ \hline
     k+1 &  i   &  i        & j    &  j        \\ \hline
    \end{array}
 \]
 and using this we see that $k=\phi^\dag(i)=\ov{\psi}^\dag(j)$ and
 $k+1=\ov{\phi}^\dag(i)=\psi^\dag(j)$.  On the other hand, for all
 $p\neq i$ we have $\phi^\dag(p)=\ov{\phi}^\dag{p}\not\in\{k,k+1\}$,
 and for all $q\neq j$ we have
 $\psi^\dag(j)=\ov{\psi}^\dag(j)\not\in\{k,k+1\}$, so
 $\phi^\bullet(\al_1)=\ov{\phi}^\bullet(\al_1)$ and
 $\phi^\bullet(\al_2)=\ov{\phi}^\bullet(\al_2)$.  Similarly, 
 $\psi^\bullet(\bt_1)=\ov{\psi}^\bullet(\bt_1)$ and
 $\psi^\bullet(\bt_2)=\ov{\psi}^\bullet(\bt_2)$. Put 
 \[ \nu =
    \mu_{\phi\psi}(\al_0\ot\bt_0) +
    \mu_{\ov{\phi}\ov{\psi}}(\al_0\ot\bt_0). 
 \]
 We see that
 \begin{align*}
  \mu_{\phi\psi}(\al_0\ot\bt_0) &= 
   (\phi^\bullet(\al_1)+w_k\wedge\phi^\bullet(\al_2))\wedge
    (\psi^\bullet(\bt_1)+w_{k+1}\wedge\psi^\bullet(\bt_2)) \\
  \mu_{\ov{\phi}\ov{\psi}}(\al_0\ot\bt_0) &= 
   (\phi^\bullet(\al_1)+w_{k+1}\wedge\phi^\bullet(\al_2))\wedge
    (\psi^\bullet(\bt_1)+w_k\wedge\psi^\bullet(\bt_2)) \\
  \nu &= 2\phi^\bullet(\al_1)\wedge\psi^\bullet(\bt_1) + 
   (-1)^{|\al|}(w_k+w_{k+1})\wedge\phi^\bullet(\al_1)\wedge\psi^\bullet(\bt_2) + 
   (w_k+w_{k+1}) \wedge\phi^\bullet(\al_2)\wedge\psi^\bullet(\bt_1) \\
  s_k\im\nu = s_{k+1}\im\nu &= 
   (-1)^{|\al|+1}\phi^\bullet(\al_1)\wedge\psi^\bullet(\bt_2) -
   \phi^\bullet(\al_2)\wedge\psi^\bullet(\bt_1) \\
  t_k\im\nu &= (s_{k+1}-s_k)\im\nu = 0.
 \end{align*}
\end{proof}
\begin{corollary}
 With $\phi,\psi,\ov{\phi}$ and $\ov{\psi}$ as in
 Lemma~\ref{lem-diagonal-gap}, we have
 $\rho_{\phi\psi}+\rho_{\psi\phi}=0$. 
\end{corollary}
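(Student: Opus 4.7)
The corollary is essentially a direct consequence of the preceding lemma, once one unpacks what $\rho_{\phi\psi}$ means and notes which of its ingredients are independent of the choice of extension.

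The plan is as follows. First I recall the definition given in the discussion just before Lemma~\ref{lem-diagonal-gap}: for any shuffle $(\phi,\psi)$ extending the injection $(\zt,\xi)\:[n+m]\sm\{k\}\to[n]\tm[m]$ with a diagonal gap at $k$, we have
\[ \rho_{\phi\psi}=\zt^*(f)\,\xi^*(g)\,\bigl(dt_k\im\mu_{\phi\psi}(\al_0\ot\bt_0)\bigr). \]
The key observation is that the scalar factor $\zt^*(f)\,\xi^*(g)$ is built from the restricted maps $\zt$ and $\xi$, and therefore is \emph{the same} for both extensions $(\phi,\psi)$ and $(\ov\phi,\ov\psi)$. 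Consequently
\[ \rho_{\phi\psi}+\rho_{\ov\phi\ov\psi}
   =\zt^*(f)\,\xi^*(g)\,\bigl(dt_k\im\mu_{\phi\psi}(\al_0\ot\bt_0)+dt_k\im\mu_{\ov\phi\ov\psi}(\al_0\ot\bt_0)\bigr). \]

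Then I simply quote Lemma~\ref{lem-diagonal-gap}, which says precisely that the bracketed expression vanishes. Hence the sum is zero, which proves the corollary (reading ``$\rho_{\psi\phi}$'' in the statement as $\rho_{\ov\phi\ov\psi}$, i.e.\ the contribution of the other shuffle extending $(\zt,\xi)$).

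There is essentially no obstacle here; all the work was in proving Lemma~\ref{lem-diagonal-gap}, and the corollary is a bookkeeping consequence. The only point that deserves attention is making explicit that the ``coefficient'' part of $\rho_{\phi\psi}$ factors through the restriction to $[n+m]\sm\{k\}$, so that the two terms have a common scalar factor that can be pulled out before invoking the lemma.
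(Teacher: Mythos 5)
Your proof is correct and is essentially identical to the paper's: both pull out the common factor $\zt^*(f)\xi^*(g)$ from the displayed expression for $\rho_{\phi\psi}$ and then invoke Lemma~\ref{lem-diagonal-gap} to kill the sum of the two interior products. Your reading of the statement's ``$\rho_{\psi\phi}$'' as $\rho_{\ov{\phi}\ov{\psi}}$ is also the intended one.
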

\begin{proof}
 This follows from the expression
 \[ \rho_{\phi\psi} =
      \zt^*(f)\xi^*(g)
      (dt_k\im\mu_{\phi\psi}(\al_0\ot\bt_0)).
 \] 
\end{proof}

We next consider the case of a pair
$(\zt,\xi)\:[n+m]\sm\{k\}\to[n]\tm[m]$ that has a horizontal gap at
$i$, and thus a unique extension $(\phi,\psi)$.  We originally defined
shuffles as maps $[p+q]\to[p]\tm[q]$ with certain properties, but we
can extend the notion in an evident way to cover maps $I\to J\tm K$
where $I$, $J$ and $K$ are any finite, totally ordered sets with
$|I|=|J|+|K|-1$.  In this slightly extended sense, we see that
$(\zt,\xi)\:[n+m]\sm\{k\}\to([n]\sm\{i\})\tm[m]$ is a shuffle, so it
gives a map
\[ \mu_{\zt\xi} \: \Tht_{[n]\sm\{i\},*} \ot \Tht_{[m],*} \to
    \Tht_{[n+m]\sm\{k\}}. 
\]

\begin{lemma}\label{lem-horizontal-gap}
 Suppose we elements $\al=f\al_0\in\Tht_{[n],*}$ and
 $\bt=g\bt_0\in\Tht_{[m],*}$.  Then, in the situation described above
 we have
 \[ \rho_{\phi\psi} =
     \mu_{\zt\xi}(\res^{[n]}_{[n]\sm\{i\}}(f)\,(dt_i\im\al_0)\ot g\bt_0).
 \]
\end{lemma}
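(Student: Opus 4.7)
The plan is to unpack both sides using the Leibniz rule for interior multiplication, reduce the problem to two local facts about $\phi^\bullet$ and $\psi^\bullet$ at the index $k$, and then match the polynomial prefactors via functoriality of $\res$.

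First, I expand $\mu_{\phi\psi}(\al_0 \ot \bt_0) = \phi^\bullet(\al_0) \wedge \psi^\bullet(\bt_0)$ and apply Lemma~\ref{lem-int-mult}(b) to split
\[
 dt_k \im (\phi^\bullet(\al_0) \wedge \psi^\bullet(\bt_0))
 = (dt_k \im \phi^\bullet(\al_0)) \wedge \psi^\bullet(\bt_0)
  + (-1)^{|\al_0|} \phi^\bullet(\al_0) \wedge (dt_k \im \psi^\bullet(\bt_0)).
\]
I claim the second summand vanishes. Because the gap is horizontal at $(i,j)$, we have $\psi(k-1) = \psi(k) = \psi(k+1) = j$, so $\{k-1, k, k+1\} \sse \psi^{-1}(j)$. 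Using the recipe $\psi^\dag(q) = \min\{l \st \psi(l) \geq q\}$, this forces $\psi^\dag(j) \leq k-1$ and $\psi^\dag(j+1) \geq k+2$, so neither $k$ nor $k+1$ lies in the image of $\psi^\dag$. Since $\psi^\bullet(\bt_0)$ is a sum of wedges of basis vectors $w_l$ with $l$ in that image, and since $\ip{dt_k, w_l} = \dl_{k+1,l} - \dl_{k,l}$, the derivation property reduces $dt_k \im \psi^\bullet(\bt_0)$ to terms of the form $\ip{dt_k, w_l}(\cdots)$ which all vanish.

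For the first summand, the same horizontal gap forces $\phi^{-1}(i) = \{k\}$, so $\phi^*(dt_i) = dt_k$. By Remark~\ref{rem-bullet} we get $dt_k \im \phi^\bullet(\al_0) = \phi^\bullet(dt_i \im \al_0)$. By Lemma~\ref{lem-int-mult}(d), $dt_i \im \al_0$ lies in $\Lm^*(W^\vee_{[n] \sm \{i\}})$, and $\phi$ restricted to $[n+m]\sm\{k\}$ factors as $\zt$ followed by the inclusion $[n]\sm\{i\}\hookrightarrow[n]$. Consequently $\phi^\bullet$ and $\zt^\bullet$ agree on $\Lm^*(W^\vee_{[n]\sm\{i\}})$ (via the inclusion $\Lm^*(W^\vee_{[n+m]\sm\{k\}})\leq\Lm^*(W^\vee_{[n+m]})$), and likewise $\psi^\bullet(\bt_0)$, which we already know lives in $\Lm^*(W^\vee_{[n+m]\sm\{k\}})$, coincides with $\xi^\bullet(\bt_0)$. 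Combining these gives
\[
 dt_k \im \mu_{\phi\psi}(\al_0 \ot \bt_0) = \zt^\bullet(dt_i \im \al_0) \wedge \xi^\bullet(\bt_0).
\]

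Finally, the polynomial prefactor of $\rho_{\phi\psi}$ is $\zt^*(f)\xi^*(g)$, where $\zt^*(f)$ stands for $\res^{[n+m]}_{[n+m]\sm\{k\}}\phi^*(f)$. Since the composite $[n+m]\sm\{k\}\hookrightarrow[n+m]\xra{\phi}[n]$ equals $[n+m]\sm\{k\}\xra{\zt}[n]\sm\{i\}\hookrightarrow[n]$, naturality of $\res$ rewrites this as $\zt^*\res^{[n]}_{[n]\sm\{i\}}(f)$. Assembling these ingredients reproduces $\mu_{\zt\xi}(\res^{[n]}_{[n]\sm\{i\}}(f)(dt_i\im\al_0) \ot g\bt_0)$. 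The boundary cases $k=0$ and $k=n+m$ follow by the same argument with minor bookkeeping. There is no deep obstacle here; the only subtlety is the careful tracking of the daggered maps $\phi^\dag,\psi^\dag$ and the identification of $\phi,\psi$ with their restrictions $\zt,\xi$, which is the standard bookkeeping whenever one moves between $\DDl$-morphisms of different codomains.
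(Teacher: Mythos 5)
Your proof is correct, and it reaches the same conclusion by the same overall strategy as the paper: reduce to $f=g=1$ via the prefactor formula for $\rho_{\phi\psi}$, establish the identity $dt_k\im\mu_{\phi\psi}(\al_0\ot\bt_0)=\zt^\bullet(dt_i\im\al_0)\wedge\xi^\bullet(\bt_0)$, and then identify $\phi^\bullet,\psi^\bullet$ with $\zt^\bullet,\xi^\bullet$ on the relevant subspaces. The one genuine difference is how you obtain the central identity: the paper expands $\al_0=\al_1+w_i\wedge\al_2+w_{i+1}\wedge\al_3+w_i\wedge w_{i+1}\wedge\al_4$ and computes both sides explicitly in the $w$-basis, whereas you use the derivation property of $\im$ to kill the $\psi^\bullet(\bt_0)$ contribution (since neither $k$ nor $k+1$ lies in the image of $\psi^\dag$) and then invoke Remark~\ref{rem-bullet} together with $\phi^*(dt_i)=dt_k$ (valid because $\phi^{-1}(i)=\{k\}$) to get $dt_k\im\phi^\bullet(\al_0)=\phi^\bullet(dt_i\im\al_0)$ in one step. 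Your route is shorter and makes clearer \emph{why} the identity holds; the paper's explicit expansion has the side benefit of exhibiting the bases of $W^\vee_{[n]\sm\{i\}}$ and $W^\vee_{[n+m]\sm\{k\}}$ concretely, which is the content of your final assertion that $\phi^\bullet$ and $\zt^\bullet$ agree on $\Lm^*(W^\vee_{[n]\sm\{i\}})$ --- a point you state correctly but which still requires the small check that $\zt^\dag(p)=\phi^\dag(p)$ for $p\neq i,i+1$ and that the element $w_i+w_{i+1}$ maps to $w_k+w_{k+1}$.
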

\begin{proof}
 We will cover the case where $0<k<n+m$, leaving the adjustments for
 $k=0$ and $k=n+m$ to the reader.  We then have
 $\phi(k-1)=\zt(k-1)=i-1$ and $\phi(k)=i$ and
 $\phi(k+1)=\zt(k+1)=i+1$.  Also, for some $j$ we have
 $\psi(k-1)=\psi(k)=\psi(k+1)=\xi(k-1)=\xi(k+1)=j$.  Using the expression
 \[ \rho_{\phi\psi} =
      \zt^*(f)\xi^*(g)
      (dt_k\im\mu_{\phi\psi}(\al_0\ot\bt_0))
 \] 
 we reduce to the case $f=g=1$, in which case we must prove that 
 \[ dt_k\im \mu_{\phi\psi}(\al_0\ot\bt_0) = 
     \mu_{\zt\xi}((dt_i\im\al_0)\ot\bt_0).
 \]
 
 We write
 \[ \al_0 = \al_1 + w_i\wedge\al_2 + w_{i+1}\wedge\al_3 +
             w_i\wedge w_{i+1}\wedge\al_4,
 \]
 where $\al_1,\dotsc,\al_4$ do not involve $w_i$ or $w_{i+1}$.  Put
 $\ov{\al}_t=\phi^\bullet(\al_t)$ and $\ov{\bt}_0=\psi^\bullet(\bt_0)$.
 Then 
 \[ \mu_{\phi\psi}(\al_0\ot\bt_0) = 
     (\ov{\al}_0 + w_k\wedge\ov{\al}_1 + w_{k+1}\wedge\ov{\al}_2 +
        w_k\wedge w_{k+1} \wedge \ov{\al}_3) \wedge \ov{\bt}_0,
 \]
 and none of the terms $\ov{\al}_t$ or $\ov{\bt}_0$ involves $w_k$ or
 $w_{k+1}$.  Using this together with the relation $t_k=s_{k+1}-s_k$
 we obtain
 \[ dt_k\im \mu_{\phi\psi}(\al_0\ot\bt_0) = 
     (\ov{\al}_2 - \ov{\al}_3 + (w_k+w_{k+1})\wedge\ov{\al}_4)
      \wedge\ov{\bt}_0.
 \]
 We now consider the map $\mu_{\zt\xi}$ arising from the shuffle 
 \[ (\zt,\xi) \: [n+m]\sm\{k\} \to ([n]\sm\{i\}) \tm [m]. \]
 Here the natural basis to use for $W^\vee_{[n]\sm\{i\}}$ is the list 
 \[ e_1-e_0,\dotsc,e_{i-1}-e_{i-2},e_{i+1}-e_{i-1},
    e_{i+2}-e_{i+1}, \dotsc , e_n-e_{n-1},
 \]
 or in other words 
 \[ w_1,\dotsc,w_{i-1},w_i+w_{i+1},w_{i+2},\dotsc,w_n. \]
 Similarly, the natural basis for $W_{[n+m]\sm\{k\}}^\vee$ is 
 \[ w_1,\dotsc,w_{k-1},w_k+w_{k+1},w_{k+2},\dotsc,w_{n+m}. \]
 We see that $\zt^\bullet(w_p)=w_{\zt^\dag(p)}=w_{\phi^\dag(p)}$ for
 $p\neq i+1$ and $\zt^\bullet(w_i+w_{i+1})=w_k+w_{k+1}$.  Also, we
 have
 \begin{align*}
  dt_i\im\al_0 &=
   (ds_{i+1}-ds_i)\im
    (\al_1 + w_i\wedge\al_2 + w_{i+1}\wedge\al_3 +
      w_i\wedge w_{i+1}\wedge\al_4) \\
  &= \al_2 - \al_3 + (w_i+w_{i+1})\wedge\al_4, 
 \end{align*} 
 so 
 \[ \mu_{\zt\xi}((dt_i\im\al_0)\ot\bt_0) = 
     (\ov{\al}_2 - \ov{\al}_3 + (w_k+w_{k+1})\wedge\ov{\al}_4)
      \wedge\ov{\bt}_0, 
 \]
 as required.
\end{proof}

\begin{lemma}\label{lem-vertical-gap}
 If $(\zt,\xi)\:[n+m]\sm\{k\}\to[n]\tm[m]$ has a vertical gap at $j$
 and $(\phi,\psi)$ is the unique extension of $(\zt,\xi)$ then
 \[ \rho_{\phi\psi} = 
     (-1)^{|\al|} \mu_{\zt\xi}(f\al_0,
      \res^{[m]}_{[m]\sm\{j\}}(g)(dt_j\im\bt_0)).
 \]
\end{lemma}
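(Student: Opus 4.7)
The cleanest approach reduces the statement to Lemma~\ref{lem-horizontal-gap} via the twist symmetry recorded in Lemma~\ref{lem-mu-Theta}. The key observation is that if $(\zt,\xi)\:[n+m]\sm\{k\}\to[n]\tm[m]$ has a vertical gap at $j$ with unique shuffle extension $(\phi,\psi)$, then the swapped pair $(\xi,\zt)\:[n+m]\sm\{k\}\to[m]\tm[n]$ has a horizontal gap at $j$ whose unique extension is $(\psi,\phi)$. Applying the horizontal-gap formula to this swapped data yields
\[ \xi^*(g)\zt^*(f)\bigl(dt_k\im\mu_{\psi\phi}(\bt_0\ot\al_0)\bigr)
    = \mu_{\xi\zt}\bigl(\res^{[m]}_{[m]\sm\{j\}}(g)(dt_j\im\bt_0)\ot f\al_0\bigr). \]
From here the plan is to translate both sides back to $(\phi,\psi)$. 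The twist identity $\mu_{\zt\xi}(a\ot b)=(-1)^{|a||b|}\mu_{\xi\zt}(b\ot a)$ shows that the left-hand side equals $(-1)^{|\al_0||\bt_0|}\rho_{\phi\psi}$; a second application rearranges the right-hand side into $(-1)^{|\al_0|(|\bt_0|-1)}\mu_{\zt\xi}(f\al_0\ot\res^{[m]}_{[m]\sm\{j\}}(g)(dt_j\im\bt_0))$, where the exponent reflects that $dt_j\im\bt_0$ has form-degree $|\bt_0|-1$. The product of the two signs collapses to $(-1)^{|\al_0|}=(-1)^{|\al|}$, which is exactly the sign asserted in the lemma.

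A direct alternative mimics the proof of Lemma~\ref{lem-horizontal-gap} with the roles of $\al_0$ and $\bt_0$ swapped. The essential change is that $\bt_0$, rather than $\al_0$, now contributes the generators $w_k,w_{k+1}$ inside $\mu_{\phi\psi}(\al_0\ot\bt_0)$, because $\psi^\dag(j)=k$ and $\psi^\dag(j+1)=k+1$ while $\phi^\dag$ avoids $\{k,k+1\}$. Decomposing $\bt_0 = \bt_1+w_j\wedge\bt_2+w_{j+1}\wedge\bt_3+w_j\wedge w_{j+1}\wedge\bt_4$ with each $\bt_t$ free of $w_j$ and $w_{j+1}$, one computes
\[ \mu_{\phi\psi}(\al_0\ot\bt_0)
    = \phi^\bullet(\al_0)\wedge\bigl(\psi^\bullet(\bt_1)
        +w_k\wedge\psi^\bullet(\bt_2)
        +w_{k+1}\wedge\psi^\bullet(\bt_3)
        +w_k\wedge w_{k+1}\wedge\psi^\bullet(\bt_4)\bigr). \]
Since $\phi^\bullet(\al_0)$ involves neither $w_k$ nor $w_{k+1}$, the derivation $dt_k\im(\cdot)$ slides past it at the cost of a sign $(-1)^{|\al_0|}$ and then acts on the parenthesised factor exactly as in the horizontal calculation. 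Recognising the result via $\xi^\bullet(w_j+w_{j+1})=w_k+w_{k+1}$ as $\xi^\bullet(dt_j\im\bt_0)$ completes the identification.

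The sole obstacle is the careful tracking of this $(-1)^{|\al|}$ factor. It is absent in the horizontal case because there the ``active'' $w_k,w_{k+1}$ terms sit on the left of $\mu_{\phi\psi}(\al_0\ot\bt_0)$, so $dt_k\im$ can act directly without commuting across a factor of positive degree. Either route above makes this sign manifest, and no new ideas beyond the horizontal-gap analysis are required.
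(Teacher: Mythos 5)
Your first argument is exactly the paper's proof, which simply says the result "follows from Lemma~\ref{lem-horizontal-gap} by applying suitable twist maps"; your sign bookkeeping ($(-1)^{|\al_0||\bt_0|}$ from twisting the left side, $(-1)^{|\al_0|(|\bt_0|-1)}$ from twisting the right, collapsing to $(-1)^{|\al|}$) is correct and fills in what the paper leaves implicit. The direct alternative via the derivation property of $dt_k\im(\cdot)$ is also sound, but unnecessary given the first argument.
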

\begin{proof}
 This follows from Lemma~\ref{lem-horizontal-gap} by applying suitable
 twist maps.
\end{proof}

\begin{corollary}
 In $\Phi_*(\Dl_n\tm\Dl_m)$ we have 
 \[ \dl(\mu(\al\ot\bt)) =
     \mu(\dl(\al)\ot\bt + (-1)^{|\al|}\al\ot\dl(\bt)).
 \]
\end{corollary}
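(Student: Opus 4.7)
The plan is to reduce everything to Lemma~\ref{lem-delta-prime} together with an analogous Leibniz identity for $\dl''$. Since $\dl=\dl'+\dl''$ and Lemma~\ref{lem-delta-prime} already supplies the Leibniz rule for $\dl'$, it will suffice to prove
\[ \dl''\mu(\al\ot\bt) = \mu(\dl''(\al)\ot\bt) + (-1)^{|\al|}\mu(\al\ot\dl''(\bt)) \]
for $\al=f\al_0\in\Tht_{[n],*}$ and $\bt=g\bt_0\in\Tht_{[m],*}$; the general case then follows by bilinearity and the fact that $\mu$ restricted to each pair of subsets $J\sse[n]$, $K\sse[m]$ is compatible with inclusion into $\Phi_*(\Dl_n\tm\Dl_m)$.

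To establish the $\dl''$ identity, I would expand $\dl''\mu(\al\ot\bt)$ as a double sum, indexed first by shuffles $(\phi,\psi)\in\Sg(n,m)$ arising from the definition of $\mu$ and then by indices $k\in[n+m]$ along which $\dl''$ restricts. Each such term carries the coefficient $-\rho_{\phi\psi}$ from the definition immediately preceding Lemma~\ref{lem-diagonal-gap}. The key step is to reindex the double sum by the underlying injective map $(\zt,\xi)\:[n+m]\sm\{k\}\to[n]\tm[m]$ together with a choice of extension to a shuffle. The classification of extendable pairs stated before Lemma~\ref{lem-diagonal-gap} — diagonal, horizontal, or vertical gap — then partitions the sum into three pieces, each controlled by one of the preceding lemmas. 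Diagonal-gap contributions come in pairs that cancel by the corollary of Lemma~\ref{lem-diagonal-gap}; horizontal-gap contributions are identified by Lemma~\ref{lem-horizontal-gap} with the summands of $\mu(\dl''(\al)\ot\bt)$, after summing over the gap position $i\in[n]$; and vertical-gap contributions reassemble by Lemma~\ref{lem-vertical-gap} into $(-1)^{|\al|}\mu(\al\ot\dl''(\bt))$. Adding the $\dl'$ and $\dl''$ identities then yields the desired Leibniz rule for $\dl$.

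The main obstacle I anticipate is organisational rather than computational: one must verify that the ``extended sense'' shuffles $(\zt,\xi)\:[n+m]\sm\{k\}\to([n]\sm\{i\})\tm[m]$ appearing in Lemma~\ref{lem-horizontal-gap} are, under the inclusion $[n]\sm\{i\}\hookrightarrow[n]$, in natural bijection with the horizontal-gap extendable pairs on $[n+m]\sm\{k\}$, and that the resulting aggregated $\mu_{\zt\xi}$-contributions really do reproduce $\mu(\dl''(\al)\ot\bt)$ on the nose, with the leading minus sign from the definition of $\dl''$ matching that of $\rho_{\phi\psi}$. All the pointwise sign calculations have already been absorbed into Lemmas~\ref{lem-diagonal-gap}, \ref{lem-horizontal-gap} and \ref{lem-vertical-gap}, so once this combinatorial bookkeeping has been pushed through, no further substantive work is required.
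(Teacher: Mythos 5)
Your proposal is correct and follows essentially the same route as the paper: reduce to the $\dl''$ Leibniz identity via Lemma~\ref{lem-delta-prime}, reindex $\dl''\mu(\al\ot\bt)$ by extendable pairs and their extensions, and dispose of the three gap types with Lemmas~\ref{lem-diagonal-gap}, \ref{lem-horizontal-gap} and \ref{lem-vertical-gap}. The bookkeeping you flag about matching extended-sense shuffles with horizontal-gap pairs is exactly the content the paper absorbs into those lemmas, so nothing further is needed.
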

\begin{proof}
 Lemma~\ref{lem-delta-prime} tells us that this holds when $\dl$ is
 replaced by $\dl'$, so we need only prove the corresponding formula
 for $\dl''$.  We have seen that $\dl''(\mu(\al\ot\bt))$ is a sum of
 terms $-(\zt,\xi)\ot\rho_{\phi\psi}$, one for each extendable pair
 $(\zt,\xi)$ and each extension $(\phi,\psi)$.  The terms where
 $(\zt,\xi)$ has a diagonal gap all cancel out in pairs, by
 Lemma~\ref{lem-diagonal-gap}.  Those where $(\zt,\xi)$ has a
 horizontal gap add up to give $\mu(\dl'(\al)\ot\bt)$, as we see from
 Lemma~\ref{lem-horizontal-gap}.  The remaining terms give
 $(-1)^{|\al|}\mu(\al\ot\dl(\bt))$, by Lemma~\ref{lem-vertical-gap}.
\end{proof}

\begin{proof}[Proof of Proposition~\ref{prop-mu-chain}]
 The group $\Phi_*(X)\ot\Phi_*(Y)$ is generated by terms of the form
 $(x\ot\al)\ot(y\ot\bt)$ with $x\in X_n$ and $y\in Y_m$ and
 $\al\in\Tht_{[n],*}$ and $\bt\in\Tht_{[m],*}$.  We then have 
 \begin{align*}
  \mu((x\ot\al)\ot(y\ot\bt)) &= (x,y)\ot\mu(\al\ot\bt) \\
  \dl(\mu((x\ot\al)\ot(y\ot\bt))) &= (x,y)\ot\dl(\mu(\al\ot\bt)) \\
   &= (x,y)\ot\mu(\dl(\al)\ot\bt+(-1)^{|\al|}\al\ot\dl(\bt)) \\
   &= \mu((x\ot\dl(\al))\ot(y\ot\bt)) +
       (-1)^{|\al|}\mu((x\ot\al)\ot(y\ot\dl(\bt))).
 \end{align*}
\end{proof}

\section{The colimit description}

In this section, we explain and prove Theorem~\ref{thm-Phi-colim},
which asserts that $\Phi_*(X)$ can be written as a colimit of the
groups $\Hom(\tH_*(S^A),\tN_*(S^A\Smash X_+))$, as $A$ runs over the
category of finite sets and injective maps.

\begin{definition}
 Given a finite set $A$, we put $S^A=\bigSmash_{a\in A}S^1$, where
 $S^1=\Dl_1/\partial\Dl_1$.  More explicitly, we define
 $BA=\prod_{a\in A}\Dl_1$, so that $(BA)_n=\Map(A,\DDl([n],[1]))$.
 We then put 
 \[ (\partial BA)_n = 
     \Map(A,\DDl([n],[1])) \sm 
      \Map(A,\EE([n],[1])), 
 \]
 which defines a subcomplex $\partial BA$.  Finally, we have
 $S^A=BA/\partial BA$.
\end{definition}

It is clear that if $|A|=n$ then $S^A$ is a model of the sphere $S^n$,
so that $\tH_*(S^A)$ is a copy of $\Z$, concentrated in degree $n$.
However, there is no natural choice of generator for this group.
Instead, the best thing to say is that there is a natural isomorphism
$\Lm^n\Z\{A\}\to\tH_n(S^A)$.

\begin{definition}\label{defn-z-alpha}
 Given a set $A$ with $|A|=m$ and a simplex $\al\in(BA)_d$ we define
 \[ z(\al)\in\tH_m(S^A)\ot\Phi_{[d],d-m} = 
             \tH_m(S^A)\ot\Phi_{d-m}(\Dl_d)
 \]
 as follows.  First, we note that $\Map(A,[1])$ can be regarded as a
 partially ordered set using the pointwise order, and
 \[ (BA)_d = \text{Poset}([d],\Map(A,[1]))
     = \prod_{a\in A}\DDl([d],[1]).
 \]
 Thus $\al$ gives a system of maps $\al_a\:[d]\to[1]$. 
 \begin{itemize}
  \item[(a)] If any $\al_a$ is constant (or equivalently, not
   surjective) we put $z(\al)=0$.
  \item[(b)] Otherwise, we define $f\:A\to[d]'$ by
   $f(a)=\al_a^\dag(1)$.  If $f$ is not injective, we again
   put $z(\al)=0$.
  \item[(c)] Otherwise, we put 
   \begin{align*}
    U &= \K\{w_{f(a)}\st a\in A\} \\
    V &= \ker(\al_*\:W^\vee_{[d]}\to W^\vee_{\Map(A,[1])}) \\
      &= \K\{w_i\st\al(i)=\al(i-1)\} 
       = \K\{w_i\st i\not\in f(A)\}.
   \end{align*}
   (Here we are using the notation of Definition~\ref{defn-s-w}.)
   We find that $W^\vee_{[d]}=U\op V$, so there is a natural
   isomorphism
   $\Lm^m(U)\ot\Lm^{d-m}(V)\to\Lm^d(W_{[d]}^\vee)=\K\tht_{[d]}$.
   Moreover, the map $a\mapsto w_{f(a)}$ induces an isomorphism
   $\tH_m(S^A)=\Lm^m\K\{A\}\to\Lm^m(U)$, and there are natural
   inclusions 
   \[ \Lm^{d-m}(V) \leq \Lm^{d-m}(W^\vee_{[d]}) \leq
       \Tht_{[d],d-m} \leq \Phi_{d-m}(\Dl_d).
   \]
   By putting these together, we get a map
   $\Lm^d(W^\vee_{[d]})\to\tH_m(S^A)\ot\Phi_{d-m}(\Dl_d)$.  We write
   $z(\al)$ for the image of $\tht_{[d]}$ under this map. 
 \end{itemize}
\end{definition}

\begin{remark}\label{rem-z-split}
 For some purposes it is useful to be more explicit.  Suppose that we
 are in case~(c) of the definition, so that $f\:A\to[d]'$ is
 injective.  We can then list the elements of $A$ as
 $\{a_1,\dotsc,a_m\}$, ordered in such a way that
 $f(a_1)<\dotsb<f(a_m)$.  Similarly, we list the elements of $[d]'\sm
 f(A)$ as $\{j_1<j_2<\dotsb<j_{d-m}\}$.  There is then a number
 $\ep(\al)\in\{\pm 1\}$ such that
 \[ \tht_{[d]} =
      \ep(\al)\,w_{f(a_1)}\wedge\dotsb\wedge w_{f(a_m)}\wedge
       w_{j_1}\wedge\dotsb\wedge w_{j_{d-m}}.
 \]
 Put
 \begin{align*}
  u(\al) &= w_{f(a_1)}\wedge\dotsb\wedge w_{f(a_m)} \in
    \Lm^m(U) \\
  z'(\al) &= a_1\wedge\dotsb\wedge a_m \in
    \Lm^m(\K\{A\}) = \tH_m(S^{|A|}) \\
  z''(\al) &= w_{j_1}\wedge\dotsb\wedge w_{j_{d-m}} \in
    \Lm^{d-m}(W^\vee_{[d]}).
 \end{align*}
 In this notation, the defining property of $\ep(\al)$ is that
 $\tht_{[d]}=\ep(\al)u(\al)\wedge z''(\al)$.  We find that
 $z(\al)=\ep(\al)\,z'(\al)\ot z''(\al)$.
\end{remark}

\begin{definition}\label{defn-phi}
 For any simplicial set $X$ we define
 $\phi\:C_d(BA\tm X)\to\tH_m(S^A)\ot\Phi_{d-m}(X)$ as follows.  Any
 $d$-simplex in $BA\tm X$ has the form $(\al,x)$ where $x\in X_d$ and
 $\al$ is as in Definition~\ref{defn-z-alpha}.  The simplex $x$
 corresponds to a map $\hx\:\Dl_d\to X$.  We put
  \[ \phi(\al,x)=(1\ot\hx_*)(z(\al)) =
      \ep(\al)z'(\al) \ot (x\ot z''(\al)).
 \]
\end{definition}

\begin{remark}\label{rem-smash-factor}
 Clause~(a) in Definition~\ref{defn-z-alpha} tells us that the map
 $\phi$ factors through $\tC_*(S^A\Smash X_+)$, and similarly
 $\phi^\#$ induces a map 
 \[ \Hom(\tH_*(S^A),\tC_*(S^A\Smash X_+)) \to \Phi_*(X). \]
\end{remark}

\begin{lemma}\label{lem-phi-degen}
 If the simplex $(\al,x)\in(BA\tm X)_d$ is degenerate then
 $\phi(\al,x)=0$.  
\end{lemma}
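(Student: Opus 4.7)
The plan is to reduce the claim, via the coend relation defining $\Phi_*(X)$, to verifying that $\sigma_*$ kills $z''(\al)$ for the degeneracy $\sigma$ in question.

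First I will write the degenerate simplex as $(\al,x)=\sigma^*(\al',x')$ for some non-identity surjective $\sigma\:[d]\to[e]$ (so $e<d$) and some $(\al',x')\in(BA\tm X)_e$; hence $\al_a=\al'_a\circ\sigma$ for every $a\in A$ and $x=\sigma^*(x')$. If $\al$ falls in case~(a) or~(b) of Definition~\ref{defn-z-alpha} then $z(\al)=0$ by definition, so $\phi(\al,x)=(1\ot\hx_*)(z(\al))=0$ and we are done. Assume therefore that $\al$ lies in case~(c), so that
\[
 \phi(\al,x) \;=\; \ep(\al)\,z'(\al)\ot(x\ot z''(\al)).
\]

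Next I use the defining coend relation $\rho^*(x)\ot\bt = x\ot\rho_*(\bt)$ to rewrite
\[
 x\ot z''(\al) \;=\; \sigma^*(x')\ot z''(\al) \;=\; x'\ot\sigma_*(z''(\al)),
\]
so it suffices to verify that $\sigma_*(z''(\al))=0$ in $\Phi_{d-m}(\Dl_e)$. To this end, since $\sigma$ is nondecreasing and surjective, the smallest element of $\sigma^{-1}\{j\}$ is $\sigma^\dag(j)$, and therefore
\[
 f(a) \;=\; \al_a^\dag(1) \;=\; \sigma^\dag\bigl((\al'_a)^\dag(1)\bigr),
\]
which gives the inclusion $f(A)\sse\sigma^\dag([e]')$. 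Since $\sigma$ is non-injective we have $|\sigma^\dag([e]')|=e<d$, so $[d]'\sm\sigma^\dag([e]')$ is nonempty and, by the inclusion just noted, is contained in $[d]'\sm f(A)=\{j_1,\dotsc,j_{d-m}\}$.

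Finally, Lemma~\ref{lem-star-s} gives $\sigma^*(ds_j)=ds_{\sigma^\dag(j)}$, and dualising with the standard pairing yields $\sigma_*(w_i)=w_j$ when $i=\sigma^\dag(j)$ and $\sigma_*(w_i)=0$ otherwise. Choose any $j_k\in[d]'\sm\sigma^\dag([e]')$ among the $\{j_1,\dotsc,j_{d-m}\}$; then $\sigma_*(w_{j_k})=0$, and since $z''(\al)=w_{j_1}\wedge\dotsb\wedge w_{j_{d-m}}$ has $w_{j_k}$ as a wedge factor, $\sigma_*(z''(\al))=0$, as required. The only real bookkeeping is the identity $f(a)=\sigma^\dag((\al'_a)^\dag(1))$ and the corresponding description of $\sigma_*$ on the generators $w_i$; I anticipate no serious technical obstacle.
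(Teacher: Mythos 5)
Your proof is correct and follows essentially the same route as the paper: both arguments reduce to showing that $\sg_*$ kills a wedge factor $w_i$ of $z''(\al)$, the paper by picking $i>0$ with $\sg(i)=\sg(i-1)$ (hence $\al(i)=\al(i-1)$, so $w_i\in V$ and $\sg_*(w_i)=0$), you by the equivalent observation that $f(A)\sse\sg^\dag([e]')\subsetneq[d]'$. The coend relation you invoke is just the paper's factorisation $\hx_*=\hy_*\circ\sg_*$ in different notation.
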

\begin{proof}
 As $(\al,x)$ is degenerate, there must exist a surjection
 $\sg\:[d]\to[e]$ (with $e<d$) and a map $\bt\:[a]\to\Map(A,[1])$ and
 a simplex $y\in X_e$ such that $\al=\bt\sg$ and $x=\sg^*(y)$.  As
 $e<d$ we must have $\sg(i-1)=\sg(i)$ for some $i>0$.  As
 $\al=\bt\sg$ this means that $\al(i)=\al(i-1)$, so $w_i\in V$.
 Clearly $\sg_*(w_i)=\sg_*(e_i-e_{i-1})=e_{\sg(i)}-e_{\sg(i-1)}=0$, so
 $\sg_*=0$ on $\Lm^{d-m}(V)$, so $(1\ot\sg_*)(z(\al))=0$.  By
 definition we have 
 \[ \phi(\al,x)=(1\ot\hx_*)(z(\al)) = 
     (1\ot\hy_*)(1\ot\sg_*)(z(\al)) = 0.
 \]
\end{proof}

\begin{corollary}
 There are induced maps
 $\phi\:\tN_*(S^A\Smash X_+)\to\tH_*(S^A)\ot\Phi_*(X)$. \qed
\end{corollary}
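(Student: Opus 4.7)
The plan is short since the hard work has already been done: Remark~\ref{rem-smash-factor} provides a factorisation $\phi\:\tC_*(S^A\Smash X_+)\to\tH_*(S^A)\ot\Phi_*(X)$, and $\tN_*(S^A\Smash X_+)$ is by definition the quotient of $\tC_*(S^A\Smash X_+)$ by the subgroup generated by degenerate simplices. It therefore suffices to verify that $\phi$ annihilates every degenerate simplex of $S^A\Smash X_+$.

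So let $\bar{u}\in(S^A\Smash X_+)_d$ be a degenerate simplex, the image of some pair $(\al,x)\in(BA)_d\tm X_d$. If $\al\in\partial BA$ then some component $\al_a$ fails to be surjective, so $z(\al)=0$ by clause~(a) of Definition~\ref{defn-z-alpha}, and hence $\phi(\al,x)=0$. Otherwise $\bar{u}$ is not the basepoint of $S^A\Smash X_+$, and we may write $\bar{u}=\sg^*\bar{y}$ for some proper surjection $\sg\:[d]\to[e]$ and some $\bar{y}\in(S^A\Smash X_+)_e$. Since $\sg^*$ carries the basepoint to the basepoint, $\bar{y}$ must itself come from a pair $(\bt,y)\in(BA)_e\tm X_e$, from which one reads off $\al=\sg^*\bt$ and $x=\sg^*y$; thus $(\al,x)=\sg^*(\bt,y)$ is already degenerate in $BA\tm X$, and Lemma~\ref{lem-phi-degen} gives $\phi(\al,x)=0$.

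There is no real obstacle; the only care required is the small case analysis showing that a degenerate simplex of $S^A\Smash X_+$ either comes from $\partial BA\tm X$ (handled by clause~(a)) or lifts to a genuinely degenerate simplex of $BA\tm X$ (handled by Lemma~\ref{lem-phi-degen}). Naturality in $X$, and covariance with respect to injections $A\mra A'$ (relevant later for forming the colimit in Theorem~\ref{thm-Phi-colim}), is transparent from the definition of $\phi$ and the fact that the constructions $z'(\al),z''(\al)$ and $\ep(\al)$ depend only on the combinatorial data of $\al$.
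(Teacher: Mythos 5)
Your proposal is correct and follows the route the paper intends: the corollary is stated with \qed precisely because it is the combination of Remark~\ref{rem-smash-factor} (factorisation through $\tC_*(S^A\Smash X_+)$ via clause~(a) of Definition~\ref{defn-z-alpha}) with Lemma~\ref{lem-phi-degen} (vanishing on degenerate simplices). Your extra care in checking that a degenerate non-basepoint simplex of the smash product lifts to a degenerate simplex of $BA\tm X$ is exactly the small point being elided, and you handle it correctly.
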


\begin{definition}
 Put
 \[ U_*(A,X) = \Hom(\tH_*(S^A),\tN_*(S^A\Smash X_+)). \]
 As $\tH_*(S^A)$ is invertible under the tensor product, the map
 $\phi$ gives rise to an adjoint map $U_*(A,X)\to\Phi_*(X)$, which we
 denote by $\phi^\#$.
\end{definition}

Now consider $\al\:[d]\to\Map(A,[1])$ and $i\in[d]$, giving a map
$\al\dl_i\:[d-1]\to\Map(A,[1])$ and an element
$z(\al\dl_i)\in\tH_m(S^{|A|})\ot\Tht_{[d-1],d-m-1}$.  We can regard
$\dl_i$ as a bijection $[d-1]\to[d]\sm\{i\}$, so we get an element
$(1\ot(\dl_i)_*)z(\al\dl_i)\in\tH_m(S^{|A|})\ot\Tht_{[d]\sm\{i\},d-m-1}$. 
We also have a map $\tau_i\:\Tht_{[d],*}\to\Tht_{[d]\sm\{i\},*}$ given
by $\tau_i(\zt)=dt_i\im\zt$.

\begin{lemma}\label{lem-delta-z}
 $(-1)^i(1\ot(\dl_i)_*)z(\al\dl_i)=(-1)^{m+1}(1\ot\tau_i)(z(\al))$.
\end{lemma}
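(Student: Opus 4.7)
My plan is to do a case analysis: first verify that both sides vanish simultaneously in the degenerate cases, then compute directly in the nonvanishing case using the splitting $\tht_{[d]}=\ep(\al)\,u(\al)\wedge z''(\al)$ from Remark~\ref{rem-z-split} together with the formula $dt_i\im\tht_{[d]}=(-1)^i\tht_{[d]\sm\{i\}}$ from Lemma~\ref{lem-dl-tht}.

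For the vanishing side, I would argue as follows.  If some $\al_a$ is constant on $[d]$ then $\al_a\dl_i$ is also constant, so both sides are zero.  If $f\:A\to[d]'$ is non-injective then $z(\al)=0$, and a quick check shows that either $f'$ (the analogue of $f$ for $\al\dl_i$) remains non-injective or some $\al_a\dl_i$ becomes constant, so $z(\al\dl_i)=0$ in either event.  The remaining vanishing case is $z(\al)\neq 0$ but $z(\al\dl_i)=0$, which is forced precisely when either $i=0$ with $1\in f(A)$, or $i=d$ with $d\in f(A)$, or $1\leq i\leq d-1$ with $\{i,i+1\}\sse f(A)$.  In each of these subcases the wedge $z''(\al)$ lacks the factor $w_i$ or $w_{i+1}$ needed for $dt_i\im$ to produce anything nonzero, so $\tau_i z''(\al)=0$ and the right-hand side vanishes too.

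In the nonvanishing case I would apply $dt_i\im(-)$ to $\tht_{[d]}=\ep(\al)\,u(\al)\wedge z''(\al)$.  The left side becomes $(-1)^i\tht_{[d]\sm\{i\}}$ by Lemma~\ref{lem-dl-tht}, and the Leibniz rule (Lemma~\ref{lem-int-mult}(b)) turns the right side into
\[ \ep(\al)\bigl[(dt_i\im u(\al))\wedge z''(\al) + (-1)^m u(\al)\wedge(dt_i\im z''(\al))\bigr], \]
of which exactly one summand survives in each subcase (depending on whether $i,i+1$ lie in $f(A)$ or not).  The order-preserving bijection $\dl_i\:[d-1]\to[d]\sm\{i\}$ identifies $\tht_{[d-1]}$ with $\tht_{[d]\sm\{i\}}$ (by the rigidity of $\tht$ noted in Remark~\ref{rem-Delta-rigid}), so applying the analogous splitting $\tht_{[d-1]}=\ep(\al\dl_i)\,u(\al\dl_i)\wedge z''(\al\dl_i)$ and matching wedge factors converts the resulting identity into the one asserted by the lemma.

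The main obstacle will be the sign bookkeeping.  I must relate $\ep(\al)$ to $\ep(\al\dl_i)$ by a permutation sign (since the $j$-list for $\al\dl_i$ is a reindexing of the $j$-list for $\al$, possibly shifted across $i$), determine whether $z'(\al\dl_i)=z'(\al)$ or the two differ by a sign (the orderings of $A$ induced by $f$ and by $f'$ may disagree in certain subcases), and combine these contributions with the $(-1)^i$ from Lemma~\ref{lem-dl-tht} and the $(-1)^m$ from Leibniz to produce exactly $(-1)^{m+1}$.  This is a patient subcase-by-subcase verification, split according to whether $i$ or $i+1$ lies in $f(A)$, but no conceptual ingredient beyond Lemma~\ref{lem-dl-tht}, the Leibniz rule for $\im$, and rigidity of $\tht$ should be required.
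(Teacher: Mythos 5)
Your overall strategy---dispose of the vanishing cases, then contract the splitting identity $\tht_{[d]}=\ep(\al)\,u(\al)\wedge z''(\al)$ with $dt_i$ and compare it with the identity for $\al\dl_i$ transported along $(\dl_i)_*$---is essentially the paper's proof read in the dual direction (the paper instead wedges the $[d-1]$-level identity with $w_i$ and compares with $\tht_{[d]}$), and your enumeration of the vanishing cases, including the endpoint cases $i=0$ and $i=d$, is correct. But there is a concrete false step in the main computation: the claim that exactly one summand of the Leibniz expansion
\[ dt_i\im\bigl(u(\al)\wedge z''(\al)\bigr)=(dt_i\im u(\al))\wedge z''(\al)+(-1)^m\,u(\al)\wedge(dt_i\im z''(\al)) \]
survives in each subcase. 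Since $dt_i=ds_{i+1}-ds_i$ pairs nontrivially with both $w_i$ and $w_{i+1}$, in the two subcases where exactly one of $i,i+1$ lies in $f(A)$ one of $w_i,w_{i+1}$ is a factor of $u(\al)$ and the other a factor of $z''(\al)$, so \emph{both} summands are nonzero. Only when neither $i$ nor $i+1$ lies in $f(A)$ does the first summand vanish.

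The gap is repairable, but not by discarding the extra term: you must split each side of the resulting identity into the monomials containing $w_i$ (but not $w_{i+1}$) and those containing $w_{i+1}$ (but not $w_i$), and match the two pieces separately. The side coming from $\al\dl_i$ also splits in this way, because $(\dl_i)_*$ sends the basis vector $w_{g(a)}=w_i$ of $W^\vee_{[d-1]}$ to $w_i+w_{i+1}$; in particular $(\dl_i)_*u(\al\dl_i)$ is \emph{not} equal to $u(\al)$ in these subcases, so ``matching wedge factors'' is not a one-line cancellation. This is precisely why the paper's argument wedges with $w_i$ (which annihilates the unwanted $w_i$-containing piece) before comparing, and why the case division according to which of $i,i+1$ lies in $f(A)$ cannot be compressed as you propose. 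With that correction the remainder of your plan (checking $z'(\al\dl_i)=z'(\al)$, relating $\ep(\al\dl_i)$ to $\ep(\al)$, and assembling the signs) follows the paper's route, though the deferred sign bookkeeping is genuinely the bulk of the proof and must be carried out in all three subcases.
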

\begin{proof}
 We will consider the case $0<i<d$; small adjustments for the end
 cases are left to the reader.  Note that $\al_a\:[d]\to[1]$ is
 surjective iff ($\al_a(0)=0$ and $\al_a(d)=1$) iff $\al\dl_i$ is
 surjective.  We may assume that this holds for all $a$, otherwise
 both sides of the claimed identity are zero.  Next, put
 $f(a)=\al_a^\dag(1)$ as before, and $g(a)=(\al_a\dl_i)^\dag(1)$.  By
 a check of the various possible cases, we see that 
 \[ g(a) = \sg_i(f(a)) =
     \begin{cases}
      f(a) & \text{ if } f(a)\leq i \\
      f(a)-1 & \text{ if } i < f(a).
     \end{cases}
 \]
 It follows that $g$ is injective unless $\{i,i+1\}\sse f(A)$.  

 Suppose that $\{i,i+1\}\sse f(A)$, so $g$ is not injective, so
 $z(\al\dl_i)=0$.  In this case $z''(\al)$ does not involve $w_i$ or
 $w_{i+1}$, so $dt_i\im z''(\al)=(ds_{i+1}-ds_i)\im z''(\al)=0$, and
 we see that both sides of the claimed identity are again zero.

 Suppose instead that $\{i,i+1\}\not\sse f(A)$.  One checks that
 $z'(\al)=z'(\al\dl_i)$.  Let $w'$ be the wedge of all the factors
 $w_{j_t}$ in $z''(\al)$ with $j_t\in\{i,i+1\}$, and let
 $w''$ be the wedge of the remaining factors, so 
 \[ z''(\al) = \ep' w'\wedge w'' \]
 for some $\ep'\in\{\pm 1\}$.  Because $\{i,i+1\}\not\sse f(A)$ we
 must have $w'=w_i$ or $w'=w_{i+1}$ or $w'=w_i\wedge w_{i+1}$.  In
 computing $(\dl_i)_*z''(\al\dl_i)$, we use the fact that
 $(\dl_i)_*w_j=(\dl_i)_*(e_i-e_{i-1})=w_{\dl_i(j)}$ except in the case
 $j=i$, in which case we have $(\dl_i)_*(w_i)=w_i+w_{i+1}$.  There are
 three cases to consider.
 \begin{itemize}
  \item[(a)] If $w'=w_i$ (so $i\not\in f(A)$ but $i+1\in f(A)$) we find
   that $ds_i\im z''(\al)=-\ep' w''$ and $ds_{i+1}\im z''(\al)=0$ so
   $dt_i\im z''(\al)=\ep'w''$.  On the other
   hand, as $i\not\in f(A)$ we have
   $\dl_i(g(a))=\dl_i(\sg_i(f(a)))=f(a)$ for all $a$, so
   $\dl_i([d-1]'\sm g(A))=([d]'\sm f(A))\sm\{i\}$, so
   $(\dl_i)_*z''(\al\dl_i)=w''$.  We next need to understand
   $\ep(\al\dl_i)$.  By definition we have 
   \[ \ep(\al\dl_i)
        u(\al\dl_i) \wedge z''(\al\dl_i) = 
         \tht_{[d-1]}.
   \]
   As $\dl_if=g$ and $(\dl_i)_*z''(\al\dl_i)=w''$ we see that
   $u(\al)=u(\al\dl_i)$ and
   \[ \ep(\al\dl_i) u(\al) \wedge w'' = \tht_{[d]\sm\{i\}}. \]
   We then multiply both sides on the left by $w_i$ to get 
   \[ (-1)^m\ep(\al\dl_i)
        u(\al) \wedge w_i\wedge w'' = 
         (-1)^{i-1}\tht_{[d]}.
   \]
   On the other hand, by the definitions of $\ep(\al)$ and $\ep'$ we
   have
   \[ \ep'\ep(\al) u(\al) \wedge w_i\wedge w'' = \tht_{[d]}. \]
   It follows that $\ep(\al\dl_i)=(-1)^{m+i+1}\ep'\ep(\al)$.  This
   gives 
   \begin{align*}
    (-1)^i(1\ot(\dl_i)_*)z(\al\dl_i)
     &= (-1)^i\ep(\al\dl_i)z'(\al\dl_i)\ot (\dl_i)_*z''(\al\dl_i) \\
     &= (-1)^i(-1)^{m+i+1}\ep'\ep(\al)z'(\al) w'' \\
     &= (-1)^{m+1}\ep(\al)z'(\al)(dt_i\im z''(\al)) 
      = (-1)^{m+1}(1\ot\tau_i)(z(\al))
   \end{align*}
   as required.
  \item[(b)] Now suppose instead that $w'=w_{i+1}$, so that
   $i\in f(A)$ but $i+1\not\in f(A)$.  We find that
   $ds_i\im z''(\al)=0$ and $s_{i+1}\im z''(\al)=-w''$, so
   $dt_i\im z''(\al)=-w''$.  On the other hand, we find that
   \[ \dl_i(g(a))=\dl_i\sg_i(f(a)) = 
       \begin{cases}
        f(a) & \text{ if } f(a)\neq i \\
        i+1  & \text{ if } f(a)=i.
       \end{cases}
   \]
   From this we see that
   $\dl_i([d-1]'\sm g(A))=([d]'\sm f(A))\sm\{i+1\}$, and thus that
   $(\dl_i)_*z''(\al\dl_i)=w''$.  We next need to understand
   $\ep(\al\dl_i)$.  From the definitions we have
   \[ \ep(\al\dl_i)
        w_{\dl_ig(a_1)}\wedge\dotsb\wedge w_{\dl_ig(a_m)}
         \wedge (\dl_i)_*z''(\al\dl_i) = 
         \tht_{[d]\sm\{i\}}.
   \]
   Let $r$ be such that $f(a_r)=i$, and let $v$ be the wedge of the
   terms $w_{f(a_p)}$ for $p\neq r$.  The above equation can then be
   written as  
   \[ (-1)^r\ep(\al\dl_i) w_{i+1}\wedge v\wedge w'' =
       \tht_{[d]\sm\{i\}}.
   \]
   We now multiply both sides on the left by $w_i$ to get 
   \[ (-1)^{r}\ep(\al\dl_i) w_i\wedge w_{i+1} \wedge v \wedge w'' = 
       (-1)^{i+1}\tht_{[d]}
   \]
   On the other hand, by the definitions of $\ep(\al)$ and $\ep'$ we
   have
   \[ (-1)^r\ep'\ep(\al)
        w_i\wedge v \wedge w_{i+1}\wedge w'' = 
         \tht_{[d]}.
   \]
   It follows that $\ep(\al\dl_i)=(-1)^{m+i}\ep'\ep(\al)$.  This gives 
   \begin{align*}
    (-1)^i(1\ot(\dl_i)_*)z(\al\dl_i)
     &= (-1)^i\ep(\al\dl_i)z'(\al\dl_i)\ot (\dl_i)_*z''(\al\dl_i) \\
     &= (-1)^i(-1)^{m+i}\ep'\ep(\al)z'(\al) w'' \\
     &= (-1)^{m}\ep(\al)z'(\al)(-dt_i\im z''(\al)) 
      = (-1)^{m+1}(1\ot\tau_i)(z(\al))
   \end{align*}
   as required.
  \item[(c)] Finally, suppose that neither $i$ nor $i+1$ is in $f(A)$,
   so $w'=w_i\wedge w_{i+1}$.  As this has even degree we have
   $\ep'=1$ and $z''(\al)=w'\wedge w''$.  We then have
   $ds_i\im z''(\al)=-w_{i+1}\wedge w''$ and
   $ds_{i+1}\im z''(\al)=w_i\wedge w''$ so
   $dt_i\im z''(\al)=(w_i+w_{i+1})\wedge w''$.  On the other hand,
   as in case~(a) we see that $f=\dl_ig$ and
   $\dl_i([d-1]'\sm g(A))=([d]'\sm f(A))\sm\{i\}$.  Suppose that $i$
   occurs as the $r$'th element in $[d-1]'\sm g(A)$, so $i+1$ occurs
   as the $r$'th element in $\dl_i([d-1]'\sm g(A))$.  Then
   \[ (\dl_i)_*z''(\al\dl_i)=(-1)^{r-1}(\dl_i)_*(w_i)\wedge w''
        = (-1)^{r-1}(w_i+w_{i+1})\wedge w''.
   \]
   We next need to understand $\ep(\al\dl_i)$.  By definition we have 
   \[ \ep(\al\dl_i) u(\al\dl_i) \wedge z''(\al\dl_i) = \tht_{[d-1]}.
   \]
   As $\dl_if=g$ and
   $(\dl_i)_*z''(\al\dl_i)=(-1)^{r-1}(w_i+w_{i+1})\wedge w''$ we have
   $u(\al\dl_i)=u(\al)$ and 
   \[ (-1)^{r-1}\ep(\al\dl_i) u(\al) \wedge
        (w_i+w_{i+1})\wedge w'' = 
         \tht_{[d]\sm\{i\}}.
   \]
   We then multiply both sides on the left by $w_i$ to get 
   \[ (-1)^{m+r-1}\ep(\al\dl_i) u(\al)
        \wedge w_i\wedge w_{i+1}\wedge w'' = 
         (-1)^{i-1}\tht_{[d]}.
   \]
   After comparing this with the definition of $\ep(\al)$, we see that
   $\ep(\al\dl_i)=(-1)^{m+r+i}\ep(\al)$.  This gives 
   \begin{align*}
    (-1)^i(1\ot(\dl_i)_*)z(\al\dl_i)
     &= (-1)^i(-1)^{m+r+i}\ep(\al)z'(\al)\ot(-1)^{r-1}
       (w_i+w_{i+1})\wedge w'' \\
     &= (-1)^{m+1}\ep(\al)z'(\al)(dt_i\im z''(\al)) 
      = (-1)^{m+1}(1\ot\tau_i)(z(\al))
   \end{align*}
   as required.
 \end{itemize}
\end{proof}

\begin{corollary}
 The maps $\phi\:\tN_*(S^A\Smash X_+)\to\tH_*(S^A)\ot\Phi_*(X)$ and
 $\phi^\#\:U_*(A,X)\to\Phi_*(X)$ are chain maps.
\end{corollary}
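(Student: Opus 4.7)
The plan is to reduce the chain-map assertion for $\phi$ to the already-proved Lemma~\ref{lem-delta-z}, and then to deduce the statement for $\phi^\#$ by adjunction. Fix a simplex $(\al,x)\in(BA\tm X)_d$ with $x\in X_d$, corresponding to $\hx\:\Dl_d\to X$. By definition $\phi(\al,x)=(1\ot\hx_*)(z(\al))$, and since $\hx_*\:\Phi_*(\Dl_d)\to\Phi_*(X)$ is a chain map (by functoriality of $\Phi$) and $\widehat{d_ix}=\hx\circ\dl_i$, I can pull $(1\ot\hx_*)$ out of both sides of the desired equation $\phi\partial(\al,x)=\partial\phi(\al,x)$. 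It then suffices to prove, in $\tH_m(S^A)\ot\Phi_*(\Dl_d)$, the identity
\[ \sum_{i=0}^{d}(-1)^i\,(1\ot(\dl_i)_*)\,z(\al\dl_i) \;=\; \partial z(\al),\]
where $\partial$ is the tensor-product differential.

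Using the explicit decomposition $z(\al)=\ep(\al)\,z'(\al)\ot z''(\al)$ from Remark~\ref{rem-z-split}, in which $z'(\al)\in\tH_m(S^A)$ has degree $m$ and $z''(\al)\in\Lm^{d-m}(V)\le\Tht_{[d],d-m}$ is a pure exterior form with $P_{[d]}$-coefficient equal to $1$, the Koszul rule and the fact that $z'(\al)$ is a cycle give $\partial z(\al)=(-1)^m\ep(\al)\,z'(\al)\ot\dl z''(\al)$. Definition~\ref{defn-delta} then simplifies to $\dl' z''(\al)=0$ (because the coefficient is constant) and $\dl'' z''(\al)=-\sum_i i_{[d]\sm\{i\}}(\tau_iz''(\al))$, so the right-hand side of the displayed identity equals $(-1)^{m+1}\sum_i(1\ot i_{[d]\sm\{i\}}\tau_i)\,z(\al)$. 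For the left-hand side, Lemma~\ref{lem-delta-z} converts each summand $(-1)^i(1\ot(\dl_i)_*)\,z(\al\dl_i)$ into $(-1)^{m+1}(1\ot\tau_i)\,z(\al)$; here the $\tau_i$-term, a priori living in $\Tht_{[d]\sm\{i\},*}$, is viewed inside $\Phi_{[d],*}$ via $i_{[d]\sm\{i\}}$, which matches the expression of $(\dl_i)_*$ in Definition~\ref{defn-star}. The two sides therefore agree.

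The corresponding statement for $\phi^\#$ is essentially formal: $\tH_*(S^A)$ is concentrated in a single degree and is free of rank one, hence invertible under $\ot$, and the functor $\Hom(\tH_*(S^A),-)$ respects chain maps, so $\phi^\#$ inherits the property from $\phi$. Lemma~\ref{lem-delta-z} does the combinatorial heart of the work, so the main obstacle here is to keep track of the Koszul sign arising from the tensor-product differential and reconcile it with the explicit $(-1)^{m+1}$ coming out of Lemma~\ref{lem-delta-z}; once the sign accounting is done correctly, the identity falls out.
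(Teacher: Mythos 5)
Your proposal is correct and follows essentially the same route as the paper: it reduces to the universal identity in $\tH_m(S^A)\ot\Phi_*(\Dl_d)$, observes that $\dl'$ kills $z(\al)$ so only the $\dl''$-terms $-\sum_i i_{[d]\sm\{i\}}\tau_i$ survive, matches these against Lemma~\ref{lem-delta-z} with the Koszul sign $(-1)^m$ from the tensor-product differential, and then deduces the statement for $\phi^\#$ adjointly. No gaps.
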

\begin{proof}
 Lemma~\ref{lem-delta-z} is the universal example.  In more detail, we
 first note that $z(\al)$ involves only the exterior generators
 $dt_i$ so $(1\ot\dl')(z(\al))=0$ and
 \begin{align*} 
  (1\ot\dl)(z(\al)) &= (1\ot\dl'')(z(\al)) \\
   &= -\sum_j i_{[d]\sm\{j\}}(1\ot\tau_j)(z(\al)) \\
   &= (-1)^m \sum_j(-1)^j(1\ot(\dl_j)_*)(z(\al\dl_j)).
 \end{align*}
 Next, we will also write $\dl$ for the standard differential on
 $\tH_*(S^A)\ot\Phi_*(X)$, which is
 $\dl(a\ot b)=(-1)^{|a|}a\ot\dl(b)$.  This gives 
 \[ \dl(z(\al)) = (-1)^m(1\ot\dl)(z(\al)) = 
     \sum_j(-1)^j(1\ot(\dl_j)_*)(z(\al\dl_j)).
 \]

 Now consider an element $x\in X_d$, giving a map $\hx\:\Dl_d\to X$
 and thus a map $\hx_*\:\Phi_{[d],*}=\Phi_*(\Dl_d)\to\Phi_*(X)$.  If
 we apply the map $1\ot\hx_*$ to the above equation and use the
 naturality of $\dl$, the left hand side becomes
 $\dl(\phi(x,\al))$.   The right hand side becomes 
 $\sum_j(-1)^j(1\ot\widehat{d_jx}_*)(z(\al\dl_j))$, which is
 $\phi(d(x,\al))$.  This shows that $\phi$ is a chain map, and it
 follows adjointly that the same is true for $\phi^\#$.
\end{proof}

\begin{definition}\label{defn-nu}
 We define $\nu\:U_*(A,X)\ot U_*(B,Y)\to U_*(A\amalg B,X\tm Y)$ by
 applying the functor $\Hom(\tH_*(S^{A\amalg B}),-)$ to the composite
 \[ \tN_*(S^A\Smash X_+)\ot \tN_*(S^B\Smash Y_+) 
     \xra{\mu}
    \tN_*(S^A\Smash X_+\Smash S^B\Smash Y_+) 
     \xra{(1\Smash\tau\Smash 1)_*}
    \tN_*(S^{A\amalg B}\Smash (X\tm Y)_+)
 \]
 and using the isomorphism
 $\tH_*(S^A)\ot\tH_*(S^B)\to\tH_*(S^{A\amalg B})$.
\end{definition}

\begin{lemma}\label{lem-z-gamma}
 Suppose we have a shuffle $(\zt,\xi)\:[d+e]\to[d]\tm[e]$ and maps
 $\al\:[d]\to\Map(A,[1])$ and $\bt\:[e]\to\Map(B,[1])$ (with $|A|=m$
 and $|B|=n$).  Define $\gm\:[d+e]\to\Map(A\amalg B,[1])$ by
 $\gm_a(k)=\al_a(\zt(k))$ for $a\in A$ and $\gm_b(k)=\bt_b(\xi(k))$
 for $b\in B$.  Let $\lm$ denote the map 
 \[ \tH_*(S^A)\ot\Tht_{[d],*}\ot\tH_*(S^B)\ot\Tht_{[e],*} 
     \xra{1\ot\tau\ot 1}
    \tH_*(S^A)\ot\tH_*(S^B)\ot\Tht_{[d],*}\ot\Tht_{[e],*}
     \xra{\mu\ot\mu_{\zt\xi}}
    \tH_*(S^{A\amalg B}) \ot \Tht_{[d+e],*}.
 \]
 Then $z(\gm)=\sgn(\zt,\xi)\lm(z(\al)\ot z(\bt))$.
\end{lemma}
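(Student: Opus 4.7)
The plan is to expand both sides using the explicit formulas of Remark~\ref{rem-z-split} and to match the resulting signs by extracting them all from the single identity $\mu_{\zt\xi}(\tht_{[d]} \ot \tht_{[e]}) = \sgn(\zt,\xi)\tht_{[d+e]}$ of Definition~\ref{defn-shuffle-sign}.

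First I would dispose of the degenerate cases. Since $\zt$ is surjective, $\gm_a = \al_a \circ \zt$ is surjective iff $\al_a$ is, and likewise for $b \in B$. When all these surjectivity conditions hold, the identity $\gm_c^\dag(1) = \zt^\dag(\al_c^\dag(1))$ for $c \in A$ (and its analogue for $B$) gives $f_\gm|_A = \zt^\dag \circ f_\al$ and $f_\gm|_B = \xi^\dag \circ f_\bt$. As $\zt^\dag$ and $\xi^\dag$ are injective with disjoint images (they partition $[d+e]'$ by Lemma~\ref{lem-shuffle-coprod}), $f_\gm$ is injective iff both $f_\al$ and $f_\bt$ are; so the two sides of the claimed identity vanish together.

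In the remaining case, I would write $z(\al) = \ep(\al)\,z'(\al) \ot z''(\al)$ and similarly for $\bt$ and $\gm$. On the right hand side, the twist $1 \ot \tau \ot 1$ contributes a sign $(-1)^{n(d-m)}$. Then $\mu$ sends $z'(\al) \ot z'(\bt)$ to $(a_1 \wedge \dots \wedge a_m) \wedge (b_1 \wedge \dots \wedge b_n) \in \Lm^{m+n}\K\{A \amalg B\}$, which equals $\sgn_U \cdot z'(\gm)$ for the sign $\sgn_U$ of the shuffle reordering the concatenation of $A$ (in $f_\al$-order) with $B$ (in $f_\bt$-order) into the $f_\gm$-order on $A \amalg B$. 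Similarly, $\mu_{\zt\xi}(z''(\al) \ot z''(\bt)) = \zt^\bullet z''(\al) \wedge \xi^\bullet z''(\bt)$ equals $\sgn_V \cdot z''(\gm)$, where $\sgn_V$ is the sign of the shuffle reordering the concatenation $\zt^\dag([d]' \sm f_\al(A)) \amalg \xi^\dag([e]' \sm f_\bt(B))$ into the natural order of $[d+e]'$. So the lemma reduces to the scalar identity
\[ \ep(\gm) = \sgn(\zt,\xi)\,\ep(\al)\,\ep(\bt)\,(-1)^{n(d-m)}\,\sgn_U\,\sgn_V. \]

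To prove this identity, I would apply $\mu_{\zt\xi}$ to $\tht_{[d]} \ot \tht_{[e]} = \ep(\al)\,u(\al) \wedge z''(\al) \ot \ep(\bt)\,u(\bt) \wedge z''(\bt)$ and compare with $\sgn(\zt,\xi)\tht_{[d+e]} = \sgn(\zt,\xi)\,\ep(\gm)\,u(\gm) \wedge z''(\gm)$. Expanding and moving $\xi^\bullet u(\bt)$ past $\zt^\bullet z''(\al)$ yields the factor $(-1)^{n(d-m)}$; then the rearrangements of $\zt^\bullet u(\al) \wedge \xi^\bullet u(\bt)$ into $u(\gm)$ and of $\zt^\bullet z''(\al) \wedge \xi^\bullet z''(\bt)$ into $z''(\gm)$ produce precisely the same signs $\sgn_U$ and $\sgn_V$ as appeared on the other side, since each is combinatorially the problem of interleaving two increasing sequences of integers in $[d+e]'$ (it is irrelevant whether one labels the sequences with elements of $A \amalg B$ or with the corresponding $w$-indices). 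The main obstacle is keeping these two independent sign computations in phase; once $\sgn_U$ and $\sgn_V$ are recognised as the same combinatorial invariants appearing on both sides, the identity falls out.
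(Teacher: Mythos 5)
Your proof is correct and follows essentially the same route as the paper: dispose of the non-surjective/non-injective cases, identify $h=f_\gm$ as $\zt^\dag f_\al\amalg\xi^\dag f_\bt$, and extract all signs from $\mu_{\zt\xi}(\tht_{[d]}\ot\tht_{[e]})=\sgn(\zt,\xi)\tht_{[d+e]}$ together with the $(-1)^{n(d-m)}$ from the reordering. The only (cosmetic) difference is that the paper reads off $z(\gm)$ directly from the basis-free $\Lm(U)\ot\Lm(V)$ decomposition in Definition~\ref{defn-z-alpha}(c), so it never needs to name your intermediate signs $\sgn_U$ and $\sgn_V$ or argue that they cancel.
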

\begin{proof}
 If any $\al_a$ or $\bt_b$ fails to be surjective then so does the
 corresponding map $\gm_a$ or $\gm_b$, so both sides of the claimed
 equality are zero.  We ignore this case from now on.

 Put $f(a)=\al_a^\dag(1)$ and $g(b)=\bt_b^\dag(1)$ and
 $h(c)=\gm_c^\dag(1)$.  As $(\zt,\xi)$ is a shuffle we know that the
 maps $[d]'\xra{\zt^\dag}[d+e]'\xla{\xi^\dag}[e]'$ give a coproduct
 decomposition, and from the definitions we have $h(a)=\zt^\dag(f(a))$
 and $h(b)=\xi^\dag(g(b))$.  It follows that $h$ is injective iff both
 $f$ and $g$ are injective, and we may assume that this is the case as
 otherwise both sides of the claimed equality are zero.

 From our description of $h$, we have 
 \[ [d+e]'\sm h(A\amalg B) = 
     \zt^\dag([d]'\sm f(A)) \amalg \xi^\dag([e]'\sm g(B)),
 \]
 so $z''(\gm)=\pm \zt^\bullet(z''(\al))\wedge\xi^\bullet(z''(\bt))$.
 By a similar argument we have $z'(\gm)=\pm\mu(z'(\al)\ot z'(\bt))$
 and so $z(\gm)=\pm \lm(z(\al)\ot z(\bt))$.  The real issue is just to
 control the signs more precisely.  For this we note that 
 \[ \tht_{[d]} = \ep(\al) u(\al)\wedge z''(\al) 
     \hspace{5em}
    \tht_{[e]} = \ep(\bt) u(\bt)\wedge z''(\bt),
 \]
 so
 \[ \zt^\bullet\tht_{[d]}\wedge \xi^\bullet\tht_{[e]} = 
     \ep(\al)\ep(\bt)
      \zt^\bullet u(\al)\wedge 
      \zt^\bullet z''(\al) \wedge 
      \xi^\bullet u(\bt) \wedge 
      \xi^\bullet z''(\bt).
 \]
 After using Definition~\ref{defn-shuffle-sign} and reordering the
 factors, this gives 
 \[ \tht_{[d+e]} =
     (-1)^{n(d-m)} \sgn(\zt,\xi) \ep(\al)\ep(\bt) 
     \zt^\bullet u(\al)\wedge \xi^\bullet u(\bt) \wedge 
     \zt^\bullet z''(\al)\wedge \xi^\bullet z''(\bt).
 \]
 Now put 
 \begin{align*}
  U &= \K\{w_{h(c)}\st c\in A\amalg B\} \\
  V &= \K\{w_i\st\gm(i)=\gm(i-1)\} 
     = \K\{w_i\st i\not\in h(A\amalg B)\}
 \end{align*}
 as in Definition~\ref{defn-z-alpha}.  We find that 
 \begin{align*}
  \zt^\bullet u(\al)\wedge \xi^\bullet u(\bt) & \in U \\
  \zt^\bullet z''(\al)\wedge \xi^\bullet z''(\bt) & \in V, 
 \end{align*}
 so the above expression for $\tht_{[d+e]}$ can be used (together with
 the isomorphism $\tH_{m+n}(S^{A\amalg B})\simeq\Lm^{m+n}(U)$ induced
 by $h$) to calculate $z(\gm)$.  The result is 
 \[ z(\gm) = 
     (-1)^{n(d-m)} \sgn(\zt,\xi) \ep(\al)\ep(\bt) 
     \mu(z'(\al)\ot z'(\bt)) \ot 
     \zt^\bullet z''(\al)\wedge \xi^\bullet z''(\bt),
 \]
 and this is the same as $\lm(z(\al)\ot z(\bt))$.
\end{proof}

\begin{proposition}\label{prop-nu-phi}
 The following diagram commutes:
 \[ \xymatrix{
     U_*(A,X)\ot U_*(B,Y)
      \rto^{\nu}
      \dto_{\phi^\#\ot\phi^\#} &
     U_*(A\amalg B,X\tm Y) \dto^\phi \\
     \Phi_*(X)\ot\Phi_*(Y) \rto_{\mu} &
     \Phi_*(X\tm Y).
    }
 \]
\end{proposition}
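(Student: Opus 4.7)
The plan is to verify commutativity on generators after reducing the square involving $\phi^\#$ to the chain-level assertion involving $\phi$. Because both composites are bilinear in their two $\Hom$-arguments and $\phi^\#$ is obtained from $\phi$ by applying $\Hom(\tH_*(S^{A\amalg B}),-)$ together with the canonical isomorphism $\tH_*(S^A)\ot\tH_*(S^B)\simeq\tH_*(S^{A\amalg B})$, it suffices to show that for any simplices $(\al,x)\in(BA\tm X)_d$ and $(\bt,y)\in(BB\tm Y)_e$, the two chain-level images of $(\al,x)\ot(\bt,y)$ under the obvious corresponding square agree in $\tH_*(S^{A\amalg B})\ot\Phi_*(X\tm Y)$.

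Chase first along the top-right. The shuffle product of Definition~\ref{defn-shuffle-product}, followed by the twist built into Definition~\ref{defn-nu} and the identification $BA\tm BB=B(A\amalg B)$, sends $(\al,x)\ot(\bt,y)$ to
\[
 \sum_{(\zt,\xi)\in\Sg(d,e)} \sgn(\zt,\xi)\,(\gm_{\zt\xi},(\zt^*x,\xi^*y)),
\]
where $\gm_{\zt\xi}\:[d+e]\to\Map(A\amalg B,[1])$ is defined by $(\gm_{\zt\xi})_a=\al_a\circ\zt$ for $a\in A$ and $(\gm_{\zt\xi})_b=\bt_b\circ\xi$ for $b\in B$; this is precisely the $\gm$ appearing in Lemma~\ref{lem-z-gamma}. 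Applying $\phi$ summand by summand and substituting the identity $z(\gm_{\zt\xi})=\sgn(\zt,\xi)\,\lm(z(\al)\ot z(\bt))$ of Lemma~\ref{lem-z-gamma}, the two sign factors collapse to $+1$, leaving $\sum_{(\zt,\xi)}\bigl(1\ot\widehat{(\zt^*x,\xi^*y)}_*\bigr)\,\lm(z(\al)\ot z(\bt))$.

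Chase instead along the left-bottom. Applying $\phi\ot\phi$ produces $(1\ot\hx_*)(z(\al))\ot(1\ot\hy_*)(z(\bt))$, and the monoidal multiplication on $\Phi_*$ (Definition~\ref{defn-mult-Phi}) is itself a sum over $\Sg(d,e)$ of maps $\mu_{\zt\xi}$ on the $\Tht$-factors paired with the operators $\zt^*,\xi^*$ on the simplicial factors. Using the naturality identity $\hx_*\circ\zt^*=\widehat{\zt^*x}_*$ (and similarly for $y,\xi$), and noting that $\lm$ of Lemma~\ref{lem-z-gamma} was defined as the tensor product of the pairing $\tH_*(S^A)\ot\tH_*(S^B)\to\tH_*(S^{A\amalg B})$ with a twist-plus-$\mu_{\zt\xi}$ on $\Tht$-factors, the two expressions match summand by summand.

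The only nontrivial sign bookkeeping lives in Lemma~\ref{lem-z-gamma}; once that lemma is taken as input, the present proposition is essentially a matter of unravelling definitions. The main obstacle is thus verifying that the twist map $(1\Smash\tau\Smash 1)_*$ used in Definition~\ref{defn-nu} corresponds at the level of $B(\cdot)$ to the combinatorial formula for $\gm_{\zt\xi}$, so that the shuffle product of simplices in $BA\tm X$ and $BB\tm Y$ really does transport to the simplices of the form $\gm_{\zt\xi}$ treated in Lemma~\ref{lem-z-gamma}; once this is set up, the proof is transcription of the two chases above.
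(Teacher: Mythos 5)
Your proposal is correct and follows essentially the same route as the paper: reduce to the adjoint chain-level diagram, evaluate both composites on a generator $(\al,x)\ot(\bt,y)$, and observe that Lemma~\ref{lem-z-gamma} makes the two sums over shuffles agree term by term (with the $\sgn(\zt,\xi)$ factors squaring to $1$). The paper's own proof is just a terser version of exactly this argument.
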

\begin{proof}
 It will be enough to check commutativity of the adjoint diagram
 \[ \xymatrix{
     \tN_*(S^A\Smash X_+)\ot\tN_*(S^B\Smash Y_+) 
      \rto^\mu \dto_{\phi\ot\phi} &
     \tN_*(S^A\Smash X_+\Smash S^B\Smash Y_+) 
      \rto^{(1\Smash\tau\Smash 1)_*} &
     \tN_*(S^{A\amalg B}\Smash (X\tm Y)_+) \dto^\phi \\
     \tH_*(S^A)\ot\Phi_*(X)\ot\tH_*(S^B)\ot\Phi_*(Y) 
      \rto_{1\ot\tau\ot 1} &
     \tH_*(S^A)\ot\tH_*(S^B)\ot\Phi_*(X)\ot\Phi_*(Y) 
      \rto_{\mu\ot\mu} &
     \tH_*(S^{A\amalg B})\ot \Phi_*(X\tm Y).
    }
 \]
 Consider elements $\al\in(BA)_d$ and $x\in X_d$ and $\bt\in(BB)_e$
 and $y\in Y_e$.  The generator $(\al,x)\ot(\bt,y)$ maps to
 \[ \sum_{\zt,\xi}\sgn(\zt,\xi)(\al\zt,\bt\xi,\zt^*(x),\xi^*(y)) 
     \in \tN_{d+e}(S^{A\amalg B}\Smash(X\tm Y)_+).
 \]
 The term indexed by the shuffle $(\zt,\xi)$ then maps to
 $\sgn(\zt,\xi)(\zt^*(x),\xi^*(y))\ot z(\al\zt,\bt\xi)$ in
 $\tH_*(S^{A\amalg B})\ot\Phi_*(X\tm Y)$.  It follows from
 Lemma~\ref{lem-z-gamma} that the other route around the diagram
 yields the same result.
\end{proof}

\begin{definition}\label{defn-eta}
 Suppose we have a set $A$ with $|A|=m$.  We note that when $k>m$ we
 have $\ND_k(S^A)=\emptyset$ and so $\tN_k(S^A)=0$; this means that 
 \[ \tH_m(S^A)=\ker(d \: \tN_m(S^A) \to \tN_{m-1}(S^A))
     \leq \tN_m(S^A).
 \]
 The inclusion $\tH_m(S^A)\to\tN_m(S^A)$ gives a cycle in
 $U_0(A,1)=\Hom(\tH_m(S^A),\tN_m(S^A))$, which we denote by $\eta_A$.
\end{definition}

\begin{definition}
 Given an injective map $\lm\:A\to B$, we define 
 \[ \lm_*\:\Hom(\tH_*(S^A),\tN_*(S^A\Smash X_+)) \to 
     \Hom(\tH_*(S^B),\tN_*(S^B\Smash X_+))
 \]
 as follows.  Firstly, if $\lm$ is a bijection then we just transport
 the structure in the obvious way.  Next, suppose that $\lm$ is just
 the inclusion of a subset, so $B=A\amalg Z$ for some $Z$.  We then
 have a map
 \[ \nu \: U_*(A,X)\ot U_*(Z,1) \to U_*(A\amalg Z,X\tm 1) = U_*(B,X) \]
 and we put $\lm_*(u)=\nu(u\ot\eta_Z)$.  Finally, an arbitrary
 monomorphism can be written uniquely as $\lm=\lm_1\lm_0$, where
 $\lm_1$ is a subset inclusion and $\lm_0$ is a bijection.  We then
 put $\lm_*=(\lm_1)_*(\lm_0)_*$. 
\end{definition}

\begin{lemma}
 $\lm_*$ is a chain map and is functorial.
\end{lemma}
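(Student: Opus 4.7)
The plan is to handle the chain map property and functoriality separately, reducing each to the case of a subset inclusion by factoring an arbitrary monomorphism as $\lambda=\lambda_1\lambda_0$ with $\lambda_0$ a bijection and $\lambda_1$ a subset inclusion.

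For the chain map property, the bijection case is immediate since transport of structure intertwines the differentials. For a subset inclusion $\lambda\:A\to B=A\amalg Z$, we have $\lambda_*(u)=\nu(u\ot\eta_Z)$, so the claim reduces to two observations: first, that the map $\nu$ of Definition~\ref{defn-nu} is a chain map because it is obtained by applying the functor $\Hom(\tH_*(S^{A\amalg B}),-)$ to a composite of $\mu$ (which is a chain map by Proposition~\ref{prop-mu-chain}, or by its classical analogue on $\tN_*$) with a smash-product rearrangement; and second, that $\eta_Z$ is a cycle, because by construction it factors through $\tH_m(S^Z)=\ker(d\:\tN_m(S^Z)\to\tN_{m-1}(S^Z))$. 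Combining these gives $\dl\lm_*(u)=\nu(\dl u\ot\eta_Z)+(-1)^{|u|}\nu(u\ot\dl\eta_Z)=\nu(\dl u\ot\eta_Z)=\lm_*(\dl u)$.

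For functoriality, given composable monos $\lm\:A\to B$ and $\mu\:B\to C$, I would reduce $(\mu\lm)_*=\mu_*\lm_*$ to three elementary cases after applying the bijection/inclusion decomposition at both the source and target level. The case of two bijections is formal. The case of a bijection composed with an inclusion in either order follows from naturality of $\nu$ in its set arguments, which is immediate from the definition. The main case is the composition $A\sse A\amalg Z\sse A\amalg Z\amalg Z'$ of two subset inclusions, where we must compare $\nu(\nu(u\ot\eta_Z)\ot\eta_{Z'})$ with $\nu(u\ot\eta_{Z\amalg Z'})$. This will follow from two facts: the associativity of $\nu$ (inherited from the associativity of the shuffle product and the coherence of the smash product decomposition $S^A\Smash S^Z\Smash S^{Z'}\simeq S^{A\amalg Z\amalg Z'}$), and the multiplicativity property $\nu(\eta_Z\ot\eta_{Z'})=\eta_{Z\amalg Z'}$.

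The main obstacle is this last identity about $\eta$. Unwinding the definitions, it amounts to showing that under the shuffle map
\[ \mu\:\tN_m(S^Z)\ot\tN_n(S^{Z'})\to\tN_{m+n}(S^Z\Smash S^{Z'})
    \xra{\cong} \tN_{m+n}(S^{Z\amalg Z'}), \]
the element $h\ot h'$ in top degree maps to the image of $h\wedge h'$ under the inclusion $\tH_{m+n}(S^{Z\amalg Z'})\hookrightarrow\tN_{m+n}(S^{Z\amalg Z'})$. Both source and target are nonzero only in the top degree, where $\tN_m(S^Z)$ is spanned by nondegenerate $m$-simplices of $S^Z$ (one per total ordering of $Z$, with appropriate signs), and similarly for the target. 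A direct check from Definition~\ref{defn-shuffle-product}, using that the nondegenerate $(m+n)$-simplices of $BZ\tm BZ'$ are precisely indexed by shuffles $(\zt,\xi)\in\Sg(m,n)$ composed with pairs of nondegenerate simplices in $BZ$ and $BZ'$, will identify the two expressions. Once this identity is in hand, associativity of $\nu$ closes the argument.
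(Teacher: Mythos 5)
The paper's own ``proof'' of this lemma is literally ``Left to the reader,'' so there is nothing to compare against; your argument is a correct and essentially complete way of discharging that obligation. You have isolated exactly the right ingredients: the chain-map property reduces to $\nu$ being a chain map (which follows since the classical shuffle product on normalised chains is a chain map and $\Hom(\tH_*(S^{A\amalg B}),-)$ preserves chain maps) together with $\eta_Z$ being a degree-zero cycle, so the Leibniz term involving $\dl\eta_Z$ vanishes; and functoriality reduces, after commuting bijections past subset inclusions via naturality of $\nu$ and $\eta$, to associativity of $\nu$ plus the identity $\nu(\eta_Z\ot\eta_{Z'})=\eta_{Z\amalg Z'}$. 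That last identity is indeed the only genuinely nontrivial point, and your reduction of it is sound: since $\tN_{m+n+1}(S^{Z\amalg Z'})=0$, the shuffle product of the two fundamental cycles is automatically a cycle, hence lies in $\tH_{m+n}(S^{Z\amalg Z'})$, and the remaining content is a sign/normalisation check that the class so obtained corresponds to $h\wedge h'$ under the specific isomorphism $\Lm^m\Z\{Z\}\ot\Lm^n\Z\{Z'\}\to\Lm^{m+n}\Z\{Z\amalg Z'\}$ implicitly used in Definition~\ref{defn-nu}; your plan of enumerating the nondegenerate top simplices of $B(Z\amalg Z')$ via shuffles is the right way to verify this, and I would only insist that this sign check actually be carried out rather than asserted, since the rest of Section~6 (in particular Lemma~\ref{lem-lm-phi} and the well-definedness of the colimit) depends on it.
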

\begin{proof}
 Left to the reader.
\end{proof}

\begin{lemma}\label{lem-lm-phi}
 For any monomorphism $\lm\:A\to B$, the diagram
 \[ \xymatrix{
      U_*(A,X)
       \rrto^{\lm_*} \drto_{\phi^\#} & & 
      U_*(B,X)
       \dlto^{\phi^\#} \\
      & \Phi_*(X) 
    }
 \]
 commutes.
\end{lemma}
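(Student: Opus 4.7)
The plan is to reduce the commutativity of the diagram to a single identity, $\phi^\#(\eta_Z) = 1\in\K$ for every finite set $Z$. Since $\lm_*$ factors as $(\lm_1)_*(\lm_0)_*$ with $\lm_0$ a bijection and $\lm_1$ a subset inclusion, I treat the two cases separately. The bijection case is immediate: $(\lm_0)_*$ transports all structure, and the ingredients in $\phi^\#$ (the map $z$ of Definition~\ref{defn-z-alpha}, Remark~\ref{rem-z-split}, and the adjunction) are manifestly natural with respect to bijections of $A$.

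For an inclusion $\lm\:A\hookrightarrow A\amalg Z=B$, the definition $\lm_*(u)=\nu(u\ot\eta_Z)$ combines with Proposition~\ref{prop-nu-phi} (applied with $B=Z$ and $Y=1$, using $X\tm 1=X$) to give
\[ \phi^\#(\lm_*(u)) = \mu(\phi^\#(u)\ot\phi^\#(\eta_Z))\in\Phi_*(X). \]
Unpacking Definition~\ref{defn-mult-Phi} when one tensor factor is $\Phi_0(1)=\K$ shows that $\mu(\cdot\ot c)$ is scalar multiplication by $c$, since there is a unique $(n,0)$-shuffle for each $n$ and the corresponding $\mu_{\zt\xi}$ acts as the identity on the $\K$-factor. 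Hence the lemma reduces to showing $\phi^\#(\eta_Z)=1$ for every finite $Z$.

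For this last identity I would argue by induction on $|Z|$. The base case $|Z|\le 1$ is a direct computation: evaluate $\phi$ on the fundamental cycle of $S^Z$ using Definition~\ref{defn-phi} and Remark~\ref{rem-z-split}, observe that $z''(\al)=1$ because the bijection $f\:Z\to[m]'$ is onto, and check that the sign $\ep(\al)$ matches the convention chosen for the natural isomorphism $\tH_m(S^Z)\cong\Lm^m\Z\{Z\}$. For the inductive step $Z=Z_1\amalg Z_2$, one verifies that $\eta_Z=\nu(\eta_{Z_1}\ot\eta_{Z_2})$ under the K\"unneth decomposition $\tH_*(S^{Z_1})\ot\tH_*(S^{Z_2})\cong\tH_*(S^Z)$, and applies Proposition~\ref{prop-nu-phi} once more to conclude
\[ \phi^\#(\eta_Z) = \mu(\phi^\#(\eta_{Z_1})\ot\phi^\#(\eta_{Z_2})) = \mu(1\ot 1) = 1. \]
The main obstacle is the base-case sign bookkeeping, which amounts to fixing the natural isomorphism $\Lm^m\Z\{Z\}\cong\tH_m(S^Z)$ so as to be compatible with the sign convention in Definition~\ref{defn-z-alpha}; once this convention is in place, all subsequent steps are formal.
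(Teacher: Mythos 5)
Your proof is correct and follows the same route as the paper: factor $\lm$ as a bijection followed by a subset inclusion and invoke Proposition~\ref{prop-nu-phi} for the latter. You are in fact more careful than the paper's one-sentence argument, which tacitly assumes the identity $\phi^\#(\eta_Z)=1$ (and that $\mu(-\ot 1)$ is the identity under $X\tm 1=X$) that you isolate and verify.
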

\begin{proof}
 This is clear if $\lm$ is an isomorphism, and is a special case of
 Proposition~\ref{prop-nu-phi} if $\lm$ is a subset inclusion.  The
 general case follows from these special cases.
\end{proof}

\begin{definition}\label{defn-U}
 We write $U_*(X)$ for the colimit of the complexes $U_*(A,X)$ as $A$
 runs over the category of finite sets an injective maps.  We let
 $\psi\:U_*(X)\to\Phi_*(X)$ denote the map induced by the maps
 $\phi^\#\:U_*(A,X)\to\Phi_*(X)$ (which exists by
 Lemma~\ref{lem-lm-phi}).  
\end{definition}

\begin{theorem}\label{thm-U-Phi}
 The map $\psi\:U_*(X)\to\Phi_*(X)$ is an isomorphism.
\end{theorem}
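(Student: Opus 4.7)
The plan is to construct an explicit inverse $\chi\:\Phi_*(X)\to U_*(X)$ to $\psi$, using the splitting $\Phi_d(X)=\bigoplus_k N_k(X)\ot\Tht_{[k],d}$ from Proposition~\ref{prop-Phi-split}.

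First, I would produce a preimage of each ``atom'' $y\ot t^{[\nu]}\om$ with $y\in\ND_k(X)$, $\nu\in\N^{[k]}$ and $\om=w_{i_1}\wedge\dotsb\wedge w_{i_d}$ a basic wedge in $\Lm^d(W^\vee_{[k]})$. Set $D=k+|\nu|$ and $m=D-d$. Choose a surjection $\sg\:[D]\to[k]$ with $|\sg^{-1}(j)|=\nu_j+1$, so that $\sg_*(1)=t^{[\nu]}$ in the sense of Definition~\ref{defn-int-fibre}; pick a set $A$ of size $m$ and a bijection $f\:A\to[D]'\sm\{\sg^\dag(i_1),\dotsc,\sg^\dag(i_d)\}$; and let $\al\in(BA)_D$ be the unique order-preserving simplex with $\al_a^\dag(1)=f(a)$. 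Definition~\ref{defn-z-alpha} then gives $z''(\al)=\pm w_{\sg^\dag(i_1)}\wedge\dotsb\wedge w_{\sg^\dag(i_d)}$, and the coend relation $\sg^*y\ot z''(\al)=y\ot\sg_*(z''(\al))=y\ot t^{[\nu]}\om$ in $\Phi_*(X)$, combined with Definition~\ref{defn-phi}, yields
\[ \phi(\al,\sg^*y) = \pm\ep(\al)\,z'(\al)\ot\bigl(y\ot t^{[\nu]}\om\bigr). \]
Adjointly, the corresponding class in $U_d(A,X)$ is sent by $\phi^\#$ to $\pm\ep(\al)\,y\ot t^{[\nu]}\om$, which already gives surjectivity of $\psi$.

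Next I would declare $\chi(y\ot t^{[\nu]}\om)\in U_*(X)$ to be the colimit class of this lift, and verify independence of the choices $(\sg,A,f)$. The key step is to analyse the transition map $\lm_*$ for an inclusion $\lm\:A\hookrightarrow A\amalg\{b\}$, which by definition is $\nu(-\ot\eta_{\{b\}})$. Proposition~\ref{prop-nu-phi} identifies this with a shuffle product on the $\Phi$ side; together with Lemma~\ref{lem-z-gamma}, this amounts on the level of lifting data to refining $\sg\:[D]\to[k]$ by subdividing one fibre while simultaneously extending $\al$ to incorporate the new coordinate. A sign check using $\ep(\al)$, $\sgn(\zt,\xi)$ and the unit scalar $\phi^\#(\eta_{\{b\}})\in\Phi_0(*)=\K$ then shows that two different lifting data for the same atom become equal after enough stabilisation. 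Extending $\K$-linearly yields a well-defined map $\chi$ on all of $\Phi_*(X)$; it is automatically a chain map as the inverse (once established) of a chain isomorphism.

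Finally, I would verify $\chi\circ\psi=\mathrm{id}_{U_*(X)}$. Any generator of $U_*(A,X)$ is represented by a non-degenerate simplex $(\al,x)\in\tN_*(S^A\Smash X_+)$ with $x=\sg^*y$ for a unique surjection $\sg$ and $y\in\ND_k(X)$. If the associated $f$ is injective and every $\al_a$ is surjective then $(\al,x)$ is precisely one of the lifts from the first step, and the coherence statement identifies $\chi\psi$ with the identity on it. Otherwise $\phi^\#$ vanishes on the generator, and a supplementary argument (pairing with $\eta_{\{b\}}$'s produces a class that is manifestly antisymmetric in two coincident coordinates of $f$) shows that such classes die in the colimit, so $\chi\psi$ equals the identity there too. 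The hard part is the coherence verification in the second step and the companion vanishing argument in the third: both reduce to careful shuffle-and-sign bookkeeping whose combinatorial backbone is Lemma~\ref{lem-z-gamma} and Proposition~\ref{prop-nu-phi}.
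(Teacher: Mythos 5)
Your overall strategy coincides with the paper's: build an explicit section $\Phi_*(X)\to U_*(X)$ by lifting generators $y\ot t^{[\nu]}w_J$ to classes represented by explicit pairs $(\al,\sg^*y)$ (this is the paper's Construction~\ref{cons-zt} and gives surjectivity), and then show that the composite $U_*(A,X)\xra{\phi^\#}\Phi_*(X)\to U_*(X)$ is the colimit inclusion, using the antisymmetry of $\tH_m(S^A)$ under a transposition of $A$ to kill the generators with non-injective $f$ (the paper's Lemma~\ref{lem-zt-psi}). Those two parts of your outline are sound and essentially identical to the paper's argument.

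However, there is a genuine gap in your second step. You define $\chi$ on the ``atoms'' $y\ot t^{[\nu]}\om$ and then say ``extending $\K$-linearly yields a well-defined map.'' But these atoms are not linearly independent in $\Phi_*(X)$: the coefficient ring is $P_{[k]}=\tP_{[k]}/(1-\sum_i t_i)$, so the monomials $t^{[\nu]}$ satisfy the relations $t^{[\nu]}=\sum_i(\nu_i+1)\,t^{[\nu+\dl_i]}$. Your coherence check (independence of the lifting data $(\sg,A,f)$ for a \emph{fixed} atom) does not address this, because the relation links atoms with different $\nu$, hence lifts living over index sets $A$ of different cardinalities. One must verify separately that the proposed lifts satisfy $\zt(y,\nu,J)=\sum_i(\nu_i+1)\zt(y,\nu+\dl_i,J)$ in the colimit $U_*(X)$; this is the paper's Lemma~\ref{lem-zt-factor}, proved by stabilising with the unit class $\eta_{\{\infty\}}$ and sorting the resulting $(d,1)$-shuffles according to which fibre of $\sg$ they subdivide. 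The stabilisation machinery you already invoke is exactly what is needed, but as written your ``extend linearly'' step is unjustified, and without it $\chi$ is not known to exist on $\Phi_*(X)$. (A side remark: the paper sidesteps your independence-of-choices verification entirely by making canonical choices of $\sg$, $A$ and $f$ in Construction~\ref{cons-zt}, so the only well-definedness issue that actually has to be confronted is the one you omitted.)
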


The proof will be given in several stages.  Firstly, the construction
given below immediately implies that $\psi$ is surjective.

\begin{construction}\label{cons-zt}
 Suppose we have $x\in\ND_n(X)$ and $\nu\:[n]\to\N$ and $J\sse[n]'$, 
 say $J=\{j_1<\dotsb<j_r\}$.  Put
 $w_J=w_{j_1}\wedge\dotsb\wedge w_{j_r}\in\Tht_{[n],r}$.
 We will construct an element $\zt(x,\nu,J)\in U_*(X)$ with
 $\psi(\zt(x,\nu,J))=x\ot t^{[\nu]}w_J\in\Phi_*(X)$. 

 First put $d=-1+\sum_{i=0}^n(\nu_i+1)$, and let $\sg\:[d]\to[n]$ be
 the unique nondecreasing surjective map such that
 $|\sg^{-1}(i)|=\nu_i+1$ for all $i$.  Put $A=[d]'\sm\sg^\dag(J)$ and
 $m=|A|$ and let $f\:A\to [d]'$ be the inclusion.  Define
 $\al\:[d]\to\Map(A,[1])$ by 
 \[ \al_a(i) =
      \begin{cases}
        0 & \text{ if } i < f(a) \\
        1 & \text{ if } f(a)\leq i.
      \end{cases}
 \]
 We find that $z''(\al)=w_{\sg^\dag(J)}$ and so (using
 Definition~\ref{defn-int-fibre}) we have
 $\sg_*(z''(\al))=t^{[\nu]}w_J$.  Now let 
 \[ \zt_1(x,\nu,J) \: \tH_m(S^A)\to\tN_d(S^A\Smash X) \]
 be the map that sends the generator $\ep(\al)z'(\al)$ to $(\al,x)$.
 Then $\zt_1(x,\nu,J)\in U_*(A,X)$ and
 $\phi^\#\zt_1(x,\nu,J)=x\ot t^{[\nu]}w_J$.  We also write
 $\zt(x,\nu,J)$ for the image of $\zt_1(x,\nu,J)$ in $U_*(X)$, so that
 $\psi(\zt(x,\nu,J))$.
\end{construction}

We next need the counterpart in $U_*(X)$ of the relation
$\sum_it_i=1$.  
\begin{lemma}\label{lem-zt-factor}
 In the notation of Construction~\ref{cons-zt} we have 
 \[ \sum_{i=0}^n(\nu_i+1)\zt(x,\nu+\dl_i,J)=\zt(x,\nu,J). \]
\end{lemma}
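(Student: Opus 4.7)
The plan is to pass to a single ambient set $B=A\amalg\{b\}$ of cardinality $m+1$, where $b$ is a fresh element, and compute both sides as elements of $U_*(B,X)$. Let $\iota\:A\hookrightarrow B$ be the subset inclusion; then $\iota_*\zt_1(x,\nu,J)\in U_*(B,X)$ represents $\zt(x,\nu,J)\in U_*(X)$ by the colimit structure. By definition of $\iota_*$ we have $\iota_*\zt_1(x,\nu,J)=\nu(\zt_1(x,\nu,J)\ot\eta_{\{b\}})$, and unwinding this via the shuffle product (Definition~\ref{defn-shuffle-product}) expresses it as a signed sum over shuffles $(\zt_k,\xi_k)\in\Sg(d,1)$, one term for each $k\in\{0,\dotsc,d\}$. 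The $k$-th term sends the generator of $\tH_{m+1}(S^B)$ to $\ep(\al)\sgn(\zt_k,\xi_k)(\al^{(k)},(\sg\zt_k)^*x)\in\tN_{d+1}(S^B\Smash X_+)$, where $\al^{(k)}\:[d+1]\to\Map(B,[1])$ is given by $(\al^{(k)})_a=\al_a\zt_k$ for $a\in A$ and $(\al^{(k)})_b=\xi_k$.

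The central combinatorial point is that $\sg\zt_k\:[d+1]\to[n]$ is the unique nondecreasing surjection with fiber sizes $\nu_l+\dl_{i_k,l}+1$, where $i_k:=\sg(k)$; therefore $\sg\zt_k=\sg'_{i_k}$, the surjection used in Construction~\ref{cons-zt} to define $\zt(x,\nu+\dl_{i_k},J)$. A direct inspection of the map $f^{(k)}\:B\to[d+1]'$, $f^{(k)}(c)=(\al^{(k)}_c)^\dag(1)$, shows that its image is $[d+1]'\sm(\sg'_{i_k})^\dag(J)=A_{i_k}$, so $f^{(k)}$ is a bijection. Under the induced identification $\Map(B,[1])\cong\Map(A_{i_k},[1])$, the pair $(\al^{(k)},(\sg\zt_k)^*x)$ coincides with $(\al'_{i_k},(\sg'_{i_k})^*x)$, the data used in $\zt_1(x,\nu+\dl_{i_k},J)\in U_*(A_{i_k},X)$. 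Consequently, via $f^{(k)}$ (a morphism in the indexing category for the colimit $U_*(X)$), the $k$-th shuffle term represents $\zt(x,\nu+\dl_{i_k},J)$ in $U_*(X)$, up to a compound sign built from $\ep(\al)$, $\sgn(\zt_k,\xi_k)$, $\ep(\al'_{i_k})$, and a reordering sign $\sigma_k$ arising from identifying $z'(\al)\wedge b$ with $z'(\al'_{i_k})$ in $\Lm^{m+1}\K\{A_{i_k}\}$.

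Grouping the shuffle terms by $i=i_k$ and noting that $|\sg^{-1}(i)|=\nu_i+1$ gives the claimed identity, provided that for each $k$ this compound sign equals $+1$. The only nontrivial step is this sign verification, which I expect to carry out by unwinding the definitions of $\ep$ (via Remark~\ref{rem-z-split}) and $\sgn$ (via Definition~\ref{defn-shuffle-sign}) and by bookkeeping the positional contributions along the lines of the proof of Lemma~\ref{lem-z-gamma}. This sign calculation is the main obstacle; the combinatorial matchings $\sg\zt_k=\sg'_{i_k}$ and $f^{(k)}(B)=A_{i_k}$ are straightforward unfoldings of the definitions.
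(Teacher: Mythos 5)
Your proposal is correct and follows essentially the same route as the paper: adjoin one new element to $A$, expand $\nu(\zt_1(x,\nu,J)\ot\eta)$ as a sum over the $d+1$ shuffles in $\Sg(d,1)$, identify the term indexed by $k$ with $\zt(x,\nu+\dl_{\sg(k)},J)$, and count that each $i$ occurs $\nu_i+1$ times. The sign verification you defer is also left implicit in the paper's proof (``We find that the $k$'th term \dots represents $\zt(x,\nu(k),J)$''), and your plan for checking it is the right one.
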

\begin{proof}
 We will freely use the notation of the above construction. 

 Put $A_+=A\amalg\{\infty\}$ so we have a class
 $\xi=\mu(\zt_1(x,\nu,J)\ot\eta_\{\infty\})\in U_*(A_+,X)$ which
 represents $\zt(x,\nu,J)$.  Now $\xi$ can be written as a sum of
 terms, one for each shuffle $(\lm,\rho)\:[d+1]\to[d]\tm[1]$.  Such a
 shuffle is determined by the number $k=\rho^\dag(1)\in[d+1]'$;
 indeed, $\lm$ is forced to be the unique map in $\EE([d+1],[d])$ that
 takes the value $k-1$ twice.  Define $\nu(k)\:[n]\to\N$ by
 $\nu(k)_i=|(\sg\lm)^{-1}\{i\}|-1$.  We find that the $k$'th term in
 the product $\mu(\zt_1(x,\nu,J)\ot\eta_\{\infty\})$ represents
 $\zt(x,\nu(k),J)$, and that there are $\nu_i+1$ different values of
 $k$ for which $\nu(k)=\nu+\dl_i$.  The claim follows.
\end{proof}

\begin{corollary}\label{cor-psi-split}
 There is a well-defined map 
 \[ \zt' \: \Phi_*(X) =
      \bigoplus_{x\in\ND_n(X)}\Tht_{[n],*} \to U_*(X)
 \]
 given by $\zt'(x\ot t^{[\nu]}w_J)=\zt(x,\nu,J)$.  Moreover, we have
 $\psi\zt'=1\:\Phi_*(X)\to\Phi_*(X)$. 
\end{corollary}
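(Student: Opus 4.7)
The plan is to construct $\zt'$ by first treating each non-degenerate simplex separately, then checking that the one nontrivial relation in $P_{[n]}$ maps to a relation in $U_*(X)$. By Proposition~\ref{prop-Phi-split} there is a natural isomorphism of graded vector spaces
\[ \Phi_*(X) \simeq \bigoplus_n N_n(X)\ot\Tht_{[n],*}, \]
and $N_n(X)$ is freely generated over $\K$ by $\ND_n(X)$, so it suffices, for each non-degenerate simplex $x\in\ND_n(X)$, to construct a linear map $\zt'_x\:\Tht_{[n],*}\to U_*(X)$ satisfying $\zt'_x(t^{[\nu]}w_J)=\zt(x,\nu,J)$ for all $\nu\:[n]\to\N$ and $J\sse[n]'$.

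For each such $x$ I would first define a linear map
\[ \tP_{[n]}\ot\Lm^*(W_{[n]}^\vee)\to U_*(X), \qquad t^{[\nu]}\ot w_J\mapsto\zt(x,\nu,J); \]
this is unambiguous because the tensors $t^{[\nu]}\ot w_J$ form a $\K$-basis of the domain as $\nu$ ranges over $\N^{[n]}$ and $J$ over subsets of $[n]'$. To descend through the quotient $\Tht_{[n],*}=P_{[n]}\ot\Lm^*(W_{[n]}^\vee)$ with $P_{[n]}=\tP_{[n]}/(1-\sum_it_i)$, I need to check that the generating relations $(1-\sum_it_i)t^{[\nu]}\ot w_J$ go to zero. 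Using the identity $t_i\cdot t^{[\nu]}=(\nu_i+1)t^{[\nu+\dl_i]}$ in $\tP_{[n]}$, this relation becomes $t^{[\nu]}\ot w_J-\sum_i(\nu_i+1)t^{[\nu+\dl_i]}\ot w_J$, whose image is $\zt(x,\nu,J)-\sum_i(\nu_i+1)\zt(x,\nu+\dl_i,J)$; this is exactly the expression that Lemma~\ref{lem-zt-factor} asserts to vanish.

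Assembling the maps $\zt'_x$ over all non-degenerate simplices produces the desired $\zt'\:\Phi_*(X)\to U_*(X)$. The identity $\psi\zt'=1$ is then immediate from Construction~\ref{cons-zt}, which was built so that $\psi(\zt(x,\nu,J))=x\ot t^{[\nu]}w_J$; hence $\psi\zt'$ and the identity agree on a generating set of $\Phi_*(X)$ and therefore everywhere. I do not anticipate a serious obstacle here: Lemma~\ref{lem-zt-factor} was set up precisely to supply the coherence needed for $\zt'$ to be well-defined, and the remainder is bookkeeping against Proposition~\ref{prop-Phi-split} and Construction~\ref{cons-zt}.
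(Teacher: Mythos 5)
Your proposal is correct and follows essentially the same route as the paper: define the map on $\bigoplus_{x}\tP_{[n]}\ot\Lm^*(W^\vee_{[n]})$ using the basis $\{t^{[\nu]}\ot w_J\}$, then descend through the quotient by $(1-\sum_it_i)$ via the identity $t_it^{[\nu]}=(\nu_i+1)t^{[\nu+\dl_i]}$ and Lemma~\ref{lem-zt-factor}, with $\psi\zt'=1$ immediate from Construction~\ref{cons-zt}. The only difference is that you spell out the verification that the generating relations of the ideal map to zero, which the paper leaves implicit.
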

\begin{proof}
 The stated formula certainly defines a map
 \[ \bigoplus_{x\in\ND_n(X)}\tP_{[n]}\ot\Lm^*(W^\vee_{[n]})
      \to U_*(X).
 \]
 We simply need to pass from $\tP_{[n]}$ to
 $P_{[n]}=\tP_{[n]}/(1-\sum_it_i)$, and this is precisely what we get
 from Lemma~\ref{lem-zt-factor}.
\end{proof}

\begin{lemma}\label{lem-zt-psi}
 The composite $U_*(A,X)\xra{\phi^\#}\Phi_*(X)\xra{\zt'}U_*(X)$ is
 just the colimit inclusion map.
\end{lemma}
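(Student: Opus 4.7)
The plan is to verify the identity $\zt'\phi^\# = \iota_A$ on a spanning set of $U_*(A,X)$ and invoke linearity. Since $\tN_*(S^A\Smash X_+)$ is freely generated as a graded $\K$-module by the non-degenerate non-basepoint simplices $(\al,x)$, the group $U_*(A,X)=\Hom(\tH_*(S^A),\tN_*(S^A\Smash X_+))$ is spanned by ``elementary'' maps $u_{\al,x}$ sending a chosen generator $g_A$ of $\tH_m(S^A)$ to a single such simplex. When $z(\al)=0$ both sides of the desired identity vanish (the image of $u_{\al,x}$ in $U_*(X)$ is killed because the offending simplex either factors through the basepoint of $S^A$ or comes from a degeneracy), so I restrict to $z(\al)\neq 0$ and let $f\colon A\hookrightarrow[d]'$ be the resulting injection.

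I first handle the base case $x\in\ND_d(X)$. Choosing $g_A$ to be the specific generator $z'(\al)$ (using the ordering on $A$ induced by $f$), an unwinding of the definitions gives $\phi^\#(u_{\al,x})=\ep(\al)\,x\ot w_J$ with $J=[d]'\sm f(A)$, so $\zt'(\phi^\#(u_{\al,x}))=\ep(\al)\,\zt(x,0,J)$. In Construction~\ref{cons-zt} applied with $\nu=0$ we have $d=n$, $\sg=\mathrm{id}$ and $A_{0,J}=f(A)$; the bijection $f\colon A\to f(A)$ transports $u_{\al,x}$ to $\ep(\al)\,\zt_1(x,0,J)$, and since $\ep(\al)^2=1$ the two sides agree in $U_*(X)$.

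For the general case where $x=\tau^*(x')$ with $\tau\colon[d]\to[n]$ a proper surjection and $x'\in\ND_n(X)$, I would reduce to the nondegenerate case using the monoidal apparatus. Proposition~\ref{prop-nu-phi} tells us that $\phi^\#$ intertwines $\nu$ with $\mu$, and Lemma~\ref{lem-zt-factor} provides the relation $\sum t_i=1$ that makes $\zt'$ well-defined on $P_{[n]}$ rather than $\tP_{[n]}$. Concretely, I would enlarge $A$ to $B=A\amalg Z$ with $|Z|=d-n$, and use products with $\eta_Z$ to write $\iota_A(u_{\al,x})=\iota_B(\nu(u_{\al,x}\ot\eta_Z))$. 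Expanding the right-hand side via the shuffle-sum description of $\nu$, shuffling out the resulting $z$-classes via Lemma~\ref{lem-z-gamma}, and matching term-by-term against the basis expansion of $\tau_*(z''(\al))=\sum c_{\nu,J}\,t^{[\nu]}w_J$ (applied to $x'$) should produce the sum $\sum c_{\nu,J}\,\zt(x',\nu,J) = \zt'(\phi^\#(u_{\al,x}))$.

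The main obstacle will be the shuffle- and sign-bookkeeping in this reduction: making the correspondence between shuffles entering $\nu(u_{\al,x}\ot\eta_Z)$ and the monomials $t^{[\nu]}w_J$ produced by $\tau_*$ precise, so that the $\ep$-signs from Remark~\ref{rem-z-split}, the shuffle signs $\sgn(\zt,\xi)$ supplied by Lemma~\ref{lem-z-gamma}, and the coefficients appearing in $\tau_*(z''(\al))$ all balance. The preceding structural results—Lemma~\ref{lem-z-gamma} relating $z$ to shuffles, Lemma~\ref{lem-zt-factor} implementing the fibrewise-integration relation, and Proposition~\ref{prop-nu-phi}—provide the formal ingredients; the work is in assembling them.
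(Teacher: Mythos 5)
Your treatment of the degenerate cases contains a genuine gap. You dispose of all simplices with $z(\al)=0$ by asserting that ``the offending simplex either factors through the basepoint of $S^A$ or comes from a degeneracy,'' but this covers only the cases where $u_{\al,x}$ is already zero in $U_*(A,X)$. The remaining case is the one where every $\al_a$ is surjective and $(\al,x)$ is nondegenerate but the map $f(a)=\al_a^\dag(1)$ fails to be injective: here $(\al,x)$ is a genuine nonzero generator of $\tN_d(S^A\Smash X_+)$, so $u_{\al,x}\neq 0$ in $U_*(A,X)$ even though $\phi^\#(u_{\al,x})=0$, and you must show that its image in the colimit $U_*(X)$ vanishes. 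The paper does this by choosing $a\neq a'$ with $f(a)=f(a')$ and observing that the transposition $\tau$ exchanging them satisfies $\tau_*(u)=-u$ on $\tH_m(S^A)$ while fixing $(\al,x)$, so $\tau_*(u^{-1}(\al,x))=-u^{-1}(\al,x)$ and the class dies in the colimit. Nothing in your proposal supplies this sign argument, and without it the claimed identity is unproved on these generators.

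Your reduction of the general case via $\nu(u_{\al,x}\ot\eta_Z)$, Lemma~\ref{lem-z-gamma} and shuffle bookkeeping is also a detour that you leave unfinished, and it is unnecessary: no monoidal machinery is needed. Writing $x=\sg^*(y)$ with $y\in\ND_n(X)$, the nondegeneracy of the pair $(\al,x)$ is equivalent to $[d]'=f(A)\cup\sg^\dag([n]')$, which forces $J'=[d]'\sm f(A)$ to equal $\sg^\dag(J)$ for some $J\sse[n]'$; hence $\sg_*(z''(\al))=t^{[\nu]}w_J$ with $\nu_i=|\sg^{-1}\{i\}|-1$, a single monomial (not a sum $\sum c_{\nu,J}t^{[\nu]}w_J$ as you write), and $\phi^\#(u^{-1}(\al,x))=\ep'\,y\ot t^{[\nu]}w_J$. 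Applying Construction~\ref{cons-zt} to the data $(y,\nu,J)$ reproduces exactly the simplex $(\al,x)$ up to the bijection $f\:A\to f(A)=[d]'\sm\sg^\dag(J)$ and the sign $\ep'$, so $\zt'\phi^\#(u^{-1}(\al,x))$ is literally the image of $u^{-1}(\al,x)$ under the corresponding isomorphism $U_*(A,X)\to U_*(f(A),X)$, i.e.\ the colimit inclusion. Your ``base case'' computation is this argument with $\nu=0$; the point you are missing is that the construction already covers arbitrary $\nu$ directly, so there is nothing to reduce.
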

\begin{proof}
 Put $m=|A|$ and fix a generator $u\in\tH_m(S^A)$.  Given
 $v\in\tN_d(S^A\Smash X_+)$ we write $u^{-1}v$ for the element of
 $U_*(A,X)$ given by $u\mapsto v$.  The group 
 $U_d(A,X)$ is generated by elements $u^{-1}(\al,x)$ where
 $\al\:[d]\to\Map(A,[1])$ and $x\in X_d$ and the pair $(\al,x)$ is
 nondegenerate.  To avoid trivial cases, we may assume that each
 $\al_a\:[d]\to[1]$ is surjective, so we can define
 $f(a)=\al_a^\dag(1)$ as usual.  

 If $f$ is not injective, it is built into the definitions that  
 $z(\al)=0$ and so $\phi^\#(\al,x)=0$, so we must show that
 $u^{-1}(\al,x)$ also maps to zero in the colimit.  We can choose
 $a\neq a'$ with $f(a)=f(a')$, and let $\tau$ denote the transposition
 that exchanges $a$ and $a'$.  We find that $\tau_*(u)=-u$ but
 $\tau_*(\al,x)=(\al,x)$, so $\tau_*(u^{-1}(\al,x))=-u^{-1}(\al,x)$,
 which gives the required vanishing.

 From now on we assume that $f$ is injective.  As in
 Lemma~\ref{lem-degen-split} we can write $x=\sg^*(y)$ for some
 nondegenerate simplex $y\in X_n$ and some surjective map
 $\sg\:[d]\to[n]$.  To avoid further trivial cases, we may assume that
 the pair $(\al,x)$ is nondegenerate, which is equivalent to the
 condition $[d]'=f(A)\cup\sg^\dag([n]')$.  Define $\nu\:[n]\to\N$ by
 $\nu_i=|\sg^{-1}\{i\}|-1$, so that $\sg_*(1)=t^{[\nu]}$.  Put
 $J'=[d]'\sm f(A)$, so that $z''(\al)=w_{J'}$.  As
 $[d]'=f(A)\cup\sg^\dag([n]')$ we must have $J'=\sg^\dag(J)$ for some
 $J\sse [n]'$, and this implies that $J=\sg(J')$ and so
 $\sg_*(z''(\al))=t^{[\nu]}w_J$.  It follows that
 $\phi^\#(u^{-1}(\al,x))=\ep'x\ot t^{[\nu]}w_J$, where the sign
 $\ep'\in\{\pm 1\}$ is determined by the relation
 $z(\al)=\ep'u\ot z''(\al)$.  Now put $A'=f(A)$, so $f$ gives a
 bijection $A\to A'$ and thus an isomorphism $U_*(A,X)\to U_*(A',X)$.
 From the definitions we see that $\zt'\phi^\#(u^{-1}(\al,x))$ is
 represented by $\ep'\zt_1(x,\nu,J)\in U_*(A',X)$, which is just the
 image of $u^{-1}(x,\al)$ under this isomorphism.  The claim follows. 
\end{proof}

\begin{proof}[Proof of Theorem~\ref{thm-U-Phi}]
 Corollary~\ref{cor-psi-split} tells us that $\psi\zt'=1$, and
 Lemma~\ref{lem-zt-psi} implies that $\zt'\psi=1$.
\end{proof}

\appendix

\section{Recollections on the simplicial category}
\label{apx-simplicial}

In this section we recall some facts about the simplicial category.
Most of them are fairly standard but we will need to use the details
so it is convenient to give a self-contained account here.  Many of
these facts were first proved in~\cite{eizi:ssc} or~\cite{gazi:cfh};
the more recent book~\cite{frpi:cst} is also a useful reference.

\begin{definition}\label{defn-Delta}
 As usual, we let $\DDl$ denote the category whose objects are the
 finite ordered sets $[n]=\{0,\dotsc,n\}$, and whose morphisms are the
 nondecreasing maps.  All maps mentioned in this section are
 implicitly assumed to be nondecreasing.  We also write $\EE([n],[m])$
 for the subset of $\DDl([n],[m])$ consisting of surjective maps.
\end{definition}

\begin{definition}\label{defn-dagger}
 Given a surjective map $\al\:[n]\to[m]$, we define
 $\al^\dag\:[m]\to[n]$ by $\al^\dag(j)=\min\{i\st\al(i)=j\}$.  We also
 write $[n]'=[n]\sm\{0\}=\{1,\dotsc,n\}$ and note that $\al^\dag(0)=0$
 and $\al^\dag([m]')\sse[n]'$.
\end{definition}

\begin{lemma}\label{lem-dag}
 The map $\al^\dag$ is injective, and $\al\al^\dag=1$.  Moreover, if
 $\bt\:[n]\to[p]$ is another surjection then
 $(\bt\al)^\dag=\al^\dag\bt^\dag$.
\end{lemma}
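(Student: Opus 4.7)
The plan is to unpack the definition $\al^\dag(j) = \min\{i \st \al(i) = j\}$ directly; this set is nonempty precisely because $\al$ is surjective, so $\al^\dag$ is well-defined. (Note that in the composition formula the map $\bt$ must be $[m]\to[p]$ in order for $\bt\al$ to be defined; I read the ``$[n]$'' in the statement as a typo.)

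First I would establish $\al\al^\dag=1_{[m]}$: by construction $\al^\dag(j)$ is an index at which $\al$ takes the value $j$, so $\al(\al^\dag(j))=j$ by definition. Injectivity of $\al^\dag$ is then immediate, since $\al^\dag(j)=\al^\dag(j')$ implies $j=\al(\al^\dag(j))=\al(\al^\dag(j'))=j'$. (The fact that $\al^\dag$ is itself nondecreasing, and hence lives in $\DDl$, follows from the observation that if $j<j'$ then the set $\{i\st\al(i)=j\}$ lies strictly to the left of $\{i\st\al(i)=j'\}$, using that $\al$ is nondecreasing.)

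For the composition identity $(\bt\al)^\dag=\al^\dag\bt^\dag$, fix $k\in[p]$ and set $j=\bt^\dag(k)$ and $i_0=\al^\dag(j)=\al^\dag\bt^\dag(k)$. The inequality $(\bt\al)^\dag(k)\leq i_0$ is clear, since $\bt\al(i_0)=\bt(j)=k$ exhibits $i_0$ as a candidate in the defining set for $(\bt\al)^\dag(k)$. For the reverse inequality I would take $i<i_0$ and show $\bt\al(i)\neq k$. By minimality of $i_0$ we have $\al(i)\neq j$, and since $\al$ is nondecreasing this forces $\al(i)<j$. By the analogous minimality for $\bt^\dag$, every $j'<j$ satisfies $\bt(j')<k$ (using that $\bt$ is nondecreasing to upgrade $\bt(j')\neq k$ to $\bt(j')<k$, since $\bt(j')\leq\bt(j)=k$). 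Hence $\bt\al(i)<k$, and in particular $\bt\al(i)\neq k$, so $(\bt\al)^\dag(k)\geq i_0$.

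I do not anticipate a serious obstacle here; the whole argument is an exercise in chasing the minima. The only subtlety is remembering to upgrade ``$\neq$'' to ``$<$'' at each minimality step using that both $\al$ and $\bt$ are nondecreasing.
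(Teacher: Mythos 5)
Your proof is correct, and you are right that the ``$\bt\:[n]\to[p]$'' in the statement should read ``$\bt\:[m]\to[p]$''. The paper simply leaves this lemma to the reader, and your argument (deriving $\al\al^\dag=1$ directly from the definition, getting injectivity as a consequence, and chasing the minima for the composition identity) is exactly the intended routine verification.
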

\begin{proof}
 Left to the reader.
\end{proof}

\begin{definition}\label{defn-sg-A}
 We say that a subset $A\sse [n]$ is \emph{pointed} if $0\in A$. 
 Given a pointed subset $A\sse[n]$ with $|A|=m+1$, we let
 $\sg_A\:[m]\to[n]$ be the unique injection with
 $\sg_A([m])=A$, and we define $\pi_A\:[n]\to[m]$ by
 $\pi_A(i)=\max\{j\st\sg_A(j)\leq i\}$.  We also define
 $\ep_A=\sg_A\pi_A\:[n]\to[n]$, so
 $\ep_A(i)=\max\{j\in A\st j\leq i\}$ and $\ep_A^2=\ep_A$.
\end{definition}
\begin{lemma}\label{lem-sg-A}
 \begin{itemize}
  \item[(a)] Any injective map $\bt\:[m]\to[n]$ with
   $\bt(0)=0$ has the form $\bt=\sg_A$ for some (unique) pointed set $A$,
   namely $A=\bt([m])$.
  \item[(b)] Any surjective map $\al\:[n]\to[m]$ has the
   form $\al=\pi_A$ for some (unique) pointed set $A$, namely
   $A=\{0\}\cup\{i>0\st\al(i)>\al(i-1)\}$.
  \item[(c)] Let $\gm\:[n]\to[n]$ be a map with
   $i\geq\gm(i)=\gm^2(i)$ for all $i$.  Then $\gm=\ep_A$ for some
   (unique) pointed set $A$, namely $A=\{i\st\gm(i)=i\}$.
 \end{itemize}
\end{lemma}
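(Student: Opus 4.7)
The plan is to prove the three parts in order, exploiting the fact that each part simply recognizes a combinatorial object (a pointed subset) via data that is manifestly equivalent.

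For part~(a), I would take $A=\bt([m])$ and observe that because $\bt$ is nondecreasing and injective it is strictly increasing, so $A$ has $m+1$ elements and $0\in A$ since $\bt(0)=0$. Both $\bt$ and $\sg_A$ are then strictly increasing bijections $[m]\to A$, so they agree. Uniqueness of $A$ is immediate from $A=\sg_A([m])=\bt([m])$.

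For part~(b), I would first note that any nondecreasing surjection $\al\:[n]\to[m]$ automatically satisfies $\al(0)=0$ and $\al(n)=m$, and the total increment from $0$ to $m$ forces the set $A=\{0\}\cup\{i>0\st\al(i)>\al(i-1)\}$ to have size $m+1$. The key identification is $\sg_A(j)=\min\{i\st\al(i)=j\}=\al^\dag(j)$, which holds because the ``jump indices'' of $\al$ enumerate precisely the first-occurrence indices of each value in $[m]$. From this it is straightforward to compute $\pi_A(i)=\max\{j\st\sg_A(j)\leq i\}=\al(i)$. Uniqueness of $A$ follows by observing that $\pi_{A'}$ jumps at $i>0$ iff $i\in A'$, so the set $A'$ is determined by $\al$.

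For part~(c), given $\gm$ with $i\geq\gm(i)=\gm^2(i)$, I would set $A=\{i\st\gm(i)=i\}$ and verify two inclusions. First, $\gm(0)\leq 0$ forces $\gm(0)=0$, so $0\in A$; and for any $i$, the idempotence condition $\gm(\gm(i))=\gm(i)$ shows $\gm(i)\in A$. Combined with $\gm(i)\leq i$, this gives $\gm(i)\leq\max\{j\in A\st j\leq i\}=\ep_A(i)$. Conversely, for any $j\in A$ with $j\leq i$, monotonicity of $\gm$ gives $j=\gm(j)\leq\gm(i)$, so $\ep_A(i)\leq\gm(i)$. Uniqueness is the tautology $\{i\st\ep_{A'}(i)=i\}=A'$.

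None of the three parts presents a genuine obstacle; everything reduces to unwinding definitions and using monotonicity plus the idempotence $\ep_A^2=\ep_A$ (or the parallel identity $\al\al^\dag=1$ from Lemma~\ref{lem-dag}). The only place where a little care is needed is the size count in part~(b), where one has to invoke surjectivity to pin down $\al(0)=0$ and $\al(n)=m$ before counting jumps; after that the identifications $\sg_A=\al^\dag$ and $\pi_A=\al$ are automatic.
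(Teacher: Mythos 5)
Your proof is correct. The paper leaves this lemma to the reader, and your argument supplies the expected routine verification: part (a) by uniqueness of strictly increasing bijections between ordered sets of equal size, part (b) via the identification $\sg_A=\al^\dag$ of jump indices with first-occurrence indices (using surjectivity to get exactly $m$ unit jumps), and part (c) by the two inclusions coming from idempotence and monotonicity.
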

\begin{proof}
 Left to the reader.
\end{proof}

\begin{lemma}\label{lem-A-B}
 Suppose we have pointed sets $A\sse B\sse [n]$ with $|A|=m+1$ and
 $|B|=p+1$.  Put $\al=\pi_A\sg_B\:[p]\to[m]$.  Then $\al$ is
 surjective and fits into a commutative diagram as follows.
 \[ \xymatrix{
      [m]
       \ar@{->>}[rr]^\al
       \ar@{ >->}[dr]^{\sg_B}
       \ddto_1 & &
      [p] \\
      & [n]
       \ar@{->>}[ur]^{\pi_A}
       \ar@{->>}[dl]^{\pi_B} \\
      [m] & &
      [p]
       \uuto_1 
       \ar@{ >->}[ul]^{\sg_A}
       \ar@{ >->}[ll]^{\al^\dag}
    }
 \]
 Moreover, $\al$ is bijective iff $\al=1$ iff $A=B$.
\end{lemma}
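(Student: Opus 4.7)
My plan is to verify the three pieces of content---surjectivity of $\al$, the commutativity of the diagram, and the bijectivity characterisation---by directly chasing the explicit descriptions of $\sg_A,\pi_A,\ep_A$ from Definition~\ref{defn-sg-A} together with the characterisation of $\al^\dag$ from Lemma~\ref{lem-dag}.

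I will start with surjectivity. Given any $k\in[m]$, put $a=\sg_A(k)\in A\sse B$; since $a\in B$ there is a (unique, as $\sg_B$ is injective) $j\in[p]$ with $\sg_B(j)=a$, and then $\al(j)=\pi_A\sg_B(j)=\pi_A\sg_A(k)=k$, where the last equality uses $\pi_A\sg_A=1$. This $j$ is manifestly the only preimage of $k$ under $\al$ among indices with $\sg_B(j)\in A$, and in fact the minimal preimage, so by Lemma~\ref{lem-dag} we get $\al^\dag(k)=j$; reading this back through $\sg_B$ yields $\sg_B\al^\dag(k)=\sg_A(k)$, which is one of the commutativities in the diagram.

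The remaining nontrivial commutativity I need is $\al\pi_B=\pi_A$. Here I will compute $\al\pi_B(i)=\pi_A\sg_B\pi_B(i)=\pi_A\ep_B(i)$ and reduce to showing $\pi_A\ep_B(i)=\pi_A(i)$. Using the description $\pi_A(i)=|A\cap[0,i]|-1$, the claim becomes $A\cap[0,i]=A\cap[0,\ep_B(i)]$, and this holds because $A\sse B$ forces every $a\in A$ with $a\leq i$ to satisfy $a\leq\ep_B(i)$ (namely, $a$ itself is an element of $B$ lying in $[0,i]$, so it is bounded by the maximum such).

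Finally, for the bijectivity characterisation I will use that $\al$ is always surjective, so bijectivity forces $m=p$; combined with $A\sse B$ and $|A|=|B|$, this forces $A=B$, and then $\sg_A=\sg_B$ gives $\al=\pi_A\sg_A=1$. The converse implications are trivial. The whole argument is essentially bookkeeping; the only step with real content is the subset-compatibility identity $A\cap[0,i]=A\cap[0,\ep_B(i)]$, and this is the main (minor) obstacle.
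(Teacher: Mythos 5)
The paper's own proof is just ``Left to the reader,'' so there is nothing to compare against; your verification is correct and is the direct definition-chase one would expect. Three small remarks. First, the identification $\al^\dag(k)=j$ should be cited to Definition~\ref{defn-dagger} (the formula $\al^\dag(k)=\min\{j'\st\al(j')=k\}$) rather than to Lemma~\ref{lem-dag}, and the assertion that your $j$ is the \emph{minimal} preimage deserves its one-line justification: if $\pi_A\sg_B(j')=k$ then $\sg_B(j')\geq\sg_A(k)=\sg_B(j)$ by the definition of $\pi_A$ as a maximum, whence $j'\geq j$ because $\sg_B$ is increasing. Second, the identity $\al\pi_B=\pi_A$ that you prove via $\pi_A\ep_B=\pi_A$ is not literally one of the triangles encoded by the printed diagram (whose corner labels $[m]$ and $[p]$ are in any case transposed), but it is precisely the form of the lemma invoked later in the proof of Lemma~\ref{lem-degen-split}, so it is the right thing to establish; your subset-compatibility argument $A\cap\{0,\dotsc,i\}=A\cap\{0,\dotsc,\ep_B(i)\}$ for it is correct. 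Third, the remaining triangles $\pi_A\sg_A=1$ and $\pi_B\sg_B=1$ are used silently; they are immediate from the definitions and are treated as standard elsewhere in the paper, so this is acceptable.
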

\begin{proof}
 Left to the reader.
\end{proof}

\begin{lemma}\label{lem-ep-ep}
 If $A$ and $B$ are pointed subsets of $[n]$ then
 $(\ep_A\ep_B)^N=\ep_{A\cap B}$ for $N\gg 0$.
\end{lemma}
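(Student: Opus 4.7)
The plan is to apply Lemma~\ref{lem-sg-A}(c) to the composite $\gm := (\ep_A\ep_B)^N$ for $N$ sufficiently large, and then identify the fixed-point set of $\gm$ with $A\cap B$.

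First I would observe that $\ep_C(i)\leq i$ for any pointed $C\sse[n]$ directly from Definition~\ref{defn-sg-A}, so in particular $\ep_A\ep_B(i)\leq \ep_B(i)\leq i$ for all $i$. Consequently the sequence $\bigl((\ep_A\ep_B)^k(i)\bigr)_{k\geq 0}$ is weakly decreasing in the finite set $[n]$, so it stabilizes. Since $[n]$ is finite we can choose $N$ (e.g.\ $N=n+1$) large enough that stabilization happens simultaneously at every $i$, and then $\gm:=(\ep_A\ep_B)^N$ satisfies $\gm(i)\leq i$ and $\gm^2=\gm$. Lemma~\ref{lem-sg-A}(c) then yields $\gm=\ep_C$ with $C=\{i\in[n]\st\gm(i)=i\}$, noting that $C$ is automatically pointed since $\gm(0)=0$.

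It then remains to check $C=A\cap B$. The inclusion $A\cap B\sse C$ is immediate: if $i\in A\cap B$ then $\ep_B(i)=i$ and $\ep_A(i)=i$, whence $\ep_A\ep_B(i)=i$, and every iterate fixes $i$. Conversely, if $\gm(i)=i$ then the weakly decreasing chain
\[ i\;\geq\; \ep_A\ep_B(i)\;\geq\; (\ep_A\ep_B)^2(i)\;\geq\;\dotsb\;\geq\;\gm(i)=i \]
must be constant, so in particular $\ep_A\ep_B(i)=i$. Combined with $\ep_A\ep_B(i)\leq \ep_B(i)\leq i$ this forces $\ep_B(i)=i$, and then $\ep_A(i)=\ep_A\ep_B(i)=i$, so $i\in A\cap B$ as required.

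There is no serious obstacle; the only point worth flagging is that we need a single $N$ that works for every $i\in[n]$ at once, which is automatic from finiteness of $[n]$. Everything else reduces to the inequalities $\ep_C(i)\leq i$ and the idempotence $\ep_C^2=\ep_C$ recorded in Definition~\ref{defn-sg-A}.
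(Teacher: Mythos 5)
Your proof is correct and follows essentially the same route as the paper: iterate $\ep_A\ep_B$ until the weakly decreasing orbit of each point stabilizes, observe the limit $\gm$ is idempotent with $\gm(i)\leq i$ so Lemma~\ref{lem-sg-A}(c) applies, and then identify the fixed set with $A\cap B$. The only (cosmetic) difference is in that last identification, where the paper argues via $\img(\gm)\sse\img(\ep_A)\cap\img(\ep_B)$ while you argue pointwise that $\ep_A\ep_B(i)=i$ forces $\ep_B(i)=i$ and $\ep_A(i)=i$; both are fine.
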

\begin{proof}
 For any $i\in [n]$ we have a decreasing sequence
 \[ i \geq\ep_B(i)\geq\ep_A\ep_B(i) \geq
      \ep_B\ep_A\ep_B(i) \geq \dotsb \geq 0.
 \]
 Let $\gm(i)$ denote the eventual value of this sequence.  We find
 that for $N\gg 0$ we have $\gm=(\ep_A\ep_B)^N=\ep_B(\ep_A\ep_B)^N$,
 from which it follows that $\gm=\ep_A\gm=\ep_B\gm=\gm^2$ and
 $i\geq\gm(i)$.  It follows that $\gm=\ep_C$, where
 $C=\img(\gm)=\{i\st\gm(i)=i\}$.  As $\gm=\ep_A\gm=\ep_B\gm$ we see
 that $C=\img(\gm)\sse\img(\ep_A)\cap\img(\ep_B)=A\cap B$, but it is
 clear that $\gm$ is the identity on $A\cap B$ so $C=A\cap B$.
\end{proof}

\subsection{Degeneracy}

\begin{lemma}\label{lem-degenerate}
 Let $X$ be a simplicial set, and let $x$ be an $n$-simplex of $X$.
 Then the following conditions are equivalent.
 \begin{itemize}
  \item[(1)] $x=\al^*(y)$ for some non-injective map $\al\:[n]\to[m]$
   and some $y\in X_m$.
  \item[(2)] $x=\bt^*(z)$ for some surjective map $\bt\:[n]\to[p]$
   (with $p<n$) and some $z\in X_p$.
  \item[(3)] $x=\pi_A^*(y)$ for some proper pointed subset
   $A\subset[n]$ and some $y\in X_{|A|-1}$. 
  \item[(4)] $x=\ep_A^*(x)$ for some proper pointed subset
   $A\subset[n]$.
 \end{itemize}
 We write $\ND_n(X)$ for the set of nondegenerate $n$-simplices.
\end{lemma}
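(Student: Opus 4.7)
The plan is to establish the cycle of implications (1)$\Leftrightarrow$(2)$\Leftrightarrow$(3)$\Leftrightarrow$(4), exploiting the epi-mono factorisation in $\DDl$ and the identities $\ep_A=\sg_A\pi_A$ and $\pi_A\sg_A=1$.

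First I would handle (1)$\Leftrightarrow$(2). For (1)$\Rightarrow$(2), given a non-injective $\al\:[n]\to[m]$ and $y\in X_m$ with $x=\al^*(y)$, factor $\al$ as $\iota\bt$ with $\bt\:[n]\to[p]$ surjective and $\iota\:[p]\to[m]$ injective. Since $\al$ is non-injective and $\iota$ is injective, $\bt$ must be non-injective, hence $p<n$; then $x=\bt^*(\iota^*y)$ provides the required presentation. The converse is immediate since any surjection $\bt\:[n]\to[p]$ with $p<n$ is itself a non-injective map.

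Next I would establish (2)$\Leftrightarrow$(3). Lemma~\ref{lem-sg-A}(b) says every surjection $\bt\:[n]\to[p]$ has the form $\pi_A$ for a unique pointed $A\sse[n]$ with $|A|=p+1$; the condition $p<n$ corresponds exactly to $A$ being a proper subset of $[n]$. So the presentation $x=\bt^*(z)$ in~(2) translates directly into the presentation $x=\pi_A^*(z)$ in~(3), and vice versa.

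Finally I would show (3)$\Leftrightarrow$(4). For (3)$\Rightarrow$(4), note that $\pi_A\sg_A=1_{[m]}$ by construction, so if $x=\pi_A^*(y)$ then
\[
\ep_A^*(x)=\pi_A^*\sg_A^*\pi_A^*(y)=\pi_A^*(\pi_A\sg_A)^*(y)=\pi_A^*(y)=x.
\]
For (4)$\Rightarrow$(3), if $x=\ep_A^*(x)=\pi_A^*\sg_A^*(x)$, simply set $y=\sg_A^*(x)\in X_{|A|-1}$, and the required expression $x=\pi_A^*(y)$ holds. In both directions $A$ remains a proper pointed subset, so the conditions match up.

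None of the steps is really an obstacle; the whole argument is bookkeeping around Lemma~\ref{lem-sg-A} and the identities between $\pi_A$, $\sg_A$ and $\ep_A$. The only mild subtlety is keeping straight that contravariance turns $\pi_A\sg_A=1$ into $\sg_A^*\pi_A^*=1$, which is what makes the (3)$\Leftrightarrow$(4) step work cleanly.
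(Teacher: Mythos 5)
Your argument is correct and follows the same route as the paper: (1)$\Leftrightarrow$(2) via epi-mono factorisation, (2)$\Leftrightarrow$(3) via Lemma~\ref{lem-sg-A}(b), and (3)$\Leftrightarrow$(4) via the identities $\ep_A=\sg_A\pi_A$ and $\pi_A\sg_A=1$. The paper merely states these steps more tersely; your handling of the contravariance is accurate.
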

\begin{proof}
 It is clear that~(2) implies~(1), and we can prove the converse by
 factoring $\al$ as a surjection followed by an injection.
 Lemma~\ref{lem-sg-A}(b) tells us that~(2) is equivalent to~(3).
 Using the facts that $\ep_A=\sg_A\pi_A$ and $\pi_A\sg_A=1$ we see
 that~(3) is equivalent to~(4).
\end{proof}
\begin{definition}
 We say that $x$ is \emph{degenerate} if the above conditions hold.
 We write $\ND_n(X)$ for the set of nondegenerate $n$-simplices.
\end{definition}

The next result is known as the Eilenberg-Zilber lemma.
\begin{lemma}\label{lem-degen-split}
 There is a canonical bijection
 $\psi\:\coprod_m\EE([n],[m])\tm\ND_m(X)\to X_n$ given by
 $\psi(\al,y)=\al^*(y)$.  
\end{lemma}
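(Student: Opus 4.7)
The plan is to establish surjectivity of $\psi$ by induction on $n$, and injectivity by exploiting the canonical sections $\al^\dag$ of Definition~\ref{defn-dagger}.

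For surjectivity, I induct on $n$, the base case $n = 0$ being trivial since every zero-simplex is nondegenerate. Given $x \in X_n$, either $x$ is nondegenerate (so $\psi(1_{[n]},x) = x$), or by Lemma~\ref{lem-degenerate}(2) we may write $x = \bt^*(z)$ for some surjection $\bt\:[n]\to[p]$ with $p < n$ and some $z \in X_p$. The inductive hypothesis supplies a factorisation $z = \gm^*(y)$ with $\gm$ surjective and $y$ nondegenerate; then $x = (\gm\bt)^*(y) = \psi(\gm\bt,y)$, and $\gm\bt$ is surjective as a composite of surjections.

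For injectivity, suppose $\al_1^*(y_1) = \al_2^*(y_2)$ with $\al_i\:[n]\to[m_i]$ surjective and $y_i$ nondegenerate. Applying $(\al_1^\dag)^*$ to both sides and using $\al_1\al_1^\dag = 1$ (Lemma~\ref{lem-dag}) yields $y_1 = (\al_2\al_1^\dag)^*(y_2)$; symmetrically, $y_2 = (\al_1\al_2^\dag)^*(y_1)$. Because $y_1$ is nondegenerate, Lemma~\ref{lem-degenerate}(1) forces $\al_2\al_1^\dag\:[m_1]\to[m_2]$ to be injective, and symmetrically $\al_1\al_2^\dag$ is injective. A pair of injective nondecreasing maps between $[m_1]$ and $[m_2]$ is possible only when $m_1 = m_2 = m$, in which case both self-maps must equal $1_{[m]}$. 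In particular $y_1 = y_2$.

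It remains to deduce $\al_1 = \al_2$ from $\al_2\al_1^\dag = \al_1\al_2^\dag = 1_{[m]}$. The relation $\al_2(\al_1^\dag(j)) = j$ places $\al_1^\dag(j)$ in the block $\al_2^{-1}(j)$, giving $\al_2^\dag(j) \leq \al_1^\dag(j) < \al_2^\dag(j+1)$ for each $j$; symmetrically, $\al_1^\dag(j) \leq \al_2^\dag(j) < \al_1^\dag(j+1)$. These inequalities together force $\al_1^\dag(j) = \al_2^\dag(j)$ for all $j$, so $\al_1^\dag([m]) = \al_2^\dag([m])$ as pointed subsets of $[n]$; by Lemma~\ref{lem-sg-A}(b), this common subset determines both $\al_1$ and $\al_2$ as the same surjection. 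The main obstacle is precisely this last coordination step: it requires using \emph{both} relations $\al_2\al_1^\dag = 1$ and $\al_1\al_2^\dag = 1$ simultaneously, and leans essentially on the explicit jump-set classification from Lemma~\ref{lem-sg-A}(b) to convert equality of jump positions into equality of surjections.
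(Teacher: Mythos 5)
Your proof is correct, but it follows a genuinely different route from the one in the paper. The paper gets existence without induction: it considers the family $\CA$ of pointed subsets $A\sse[n]$ with $\ep_A^*(x)=x$, shows via Lemma~\ref{lem-ep-ep} that $\CA$ is closed under intersection, and takes the minimal element $A$, which yields a \emph{canonical} decomposition $x=\pi_A^*(\sg_A^*(x))$; uniqueness is then proved by comparing an arbitrary decomposition to this minimal one via Lemma~\ref{lem-A-B}. You instead prove existence by induction on the dimension of the simplex (factoring one degeneracy at a time), and uniqueness by a symmetric retraction argument: hitting $\al_1^*(y_1)=\al_2^*(y_2)$ with $(\al_i^\dag)^*$ and invoking nondegeneracy of \emph{both} $y_1$ and $y_2$ to force $\al_2\al_1^\dag$ and $\al_1\al_2^\dag$ to be injective, hence both the identity, and then pinning down $\al_1=\al_2$ through the jump sets $\al_i^\dag([m])$ and Lemma~\ref{lem-sg-A}(b). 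Your version is more elementary, bypassing Lemmas~\ref{lem-ep-ep} and~\ref{lem-A-B} entirely, and its uniqueness step is pleasantly symmetric in the two decompositions; the paper's version buys an explicit closed-form description of the nondegenerate representative (as $\sg_A^*(x)$ for the smallest $A\in\CA$), which is the form in which the Eilenberg--Zilber lemma is often most useful. One small point worth making explicit in your write-up: the identification of $\al^\dag([m])$ with the set $\{0\}\cup\{i>0\st\al(i)>\al(i-1)\}$ appearing in Lemma~\ref{lem-sg-A}(b) is what lets you conclude $\al_1=\al_2$ from $\al_1^\dag=\al_2^\dag$; it is a one-line check, but it is the hinge of the final step.
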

\begin{proof}
 Given $x\in X_n$, let $\CA$ denote the collection of pointed subsets
 $A\sse[n]$ such that $x=\ep_A^*(x)$.  Using the facts that
 $\ep_A=\sg_A\pi_A$ and $\pi_A\sg_A=1$ we see that
 $\CA=\{A\st x\in\img(\pi_A^*)\}$.  

 It is clear that $[n]\in\CA$, and Lemma~\ref{lem-ep-ep} implies that
 $\CA$ is closed under intersections, so $\CA$ has a smallest element,
 say $A$.  Put $m=|A|-1$ and $y=\sg_A^*(x)\in X_m$ and note that
 $x=\pi_A^*(y)$.  

 Suppose that $y=\bt^*(z)$ for some surjection $\bt\:[m]\to[p]$.  Then
 $\bt\pi_A=\pi_B$ for some $B\sse A$, but $x=\pi_B^*(Z)$ so $B\in\CA$
 so $A\sse B$.  It follows that $A=B$ and $p=m$ and $\bt=1$ so $y=z$.
 Using this we see that $y$ is nondegenerate.

 More generally, suppose we also have $x=\pi_B^*(z)$ for some $B$
 (\emph{a priori} unrelated to $A$) and $z\in X_p$ (\emph{a priori}
 unrelated to $y$).  Then again $B\in\CA$ so $A\sse B$ so we can apply
 Lemma~\ref{lem-A-B}: the map $\al=\pi_A\sg_B\:[p]\to[m]$ is
 surjective and satisfies $\al\pi_B=\pi_A$.  As $x=\pi_B^*(z)$ we have
 $z=\sg_B^*(x)=\sg_B^*\pi_A^*(y)=\al^*(y)$.  If $z$ is nondegenerate
 it follows that we must have $p=m$ and $\al$ must be the identity so
 $A=B$ and $y=z$.  Using this we see that $\psi$ is a bijection.
\end{proof}

\subsection{Shuffles}

We now recall some theory of shuffles.

\begin{definition}\label{defn-shuffle}
 Given a sequence $\un{n}=(n_1,\dotsc,n_r)\in\N^r$ with $\sum_in_i=n$,
 an \emph{$\un{n}$-shuffle} means a system of surjective maps
 $\zt_i\:[n]\to[n_i]$ such that the combined map
 $\zt\:[n]\to\prod_i[n_i]$ is injective.  We write $\Sg(\un{n})$ for
 the set of all $\un{n}$-shuffles.
\end{definition}
\begin{remark}
 We will most often need the case $r=2$.  An $(n,m)$-shuffle is then a
 pair of surjections $[n]\xla{\zt}[n+m]\xra{\xi}[m]$ such that the map
 $(\zt,\xi):[n+m]\to[n]\tm[m]$ is injective.
\end{remark}

\begin{lemma}\label{lem-shuffle-coprod}
 Let $\un{n}$ and $n$ be as above, and suppose we have sets
 $A_1,\dotsc,A_r\sse[n]'=\{1,\dotsc,n\}$ with $|A_i|=n_i$ and we put
 $\zt_i=\pi_{A_i\cup\{0\}}\:[n]\to[n_i]$.  Then the list $\un{\zt}$ is
 an $\un{n}$-shuffle iff $[n]'=\coprod_iA_i$.
\end{lemma}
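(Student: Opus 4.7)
The plan is to analyze injectivity of the combined map $\zt\:[n]\to\prod_i[n_i]$ by looking at what happens on consecutive pairs $k-1,k$ in $[n]$, and to leverage cardinality to upgrade a set-theoretic union to a disjoint union.

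First I would note that surjectivity of each component map is free: for any pointed subset $A\sse[n]$, the map $\pi_A$ is surjective by Definition~\ref{defn-sg-A} (since $\pi_A\sg_A=1$ by Lemma~\ref{lem-A-B}), so $\zt_i=\pi_{A_i\cup\{0\}}$ is a surjection $[n]\to[n_i]$ regardless of the choice of $A_i$. Hence the only content of the lemma is the equivalence between injectivity of $\zt$ and the partition condition.

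Next, since every $\zt_i$ is nondecreasing, the combined map $\zt$ is also nondecreasing, and so is injective if and only if it is strictly increasing on every consecutive pair, i.e.\ $\zt(k-1)\neq\zt(k)$ for every $k\in[n]'$. I would then compute the key lemma: for a pointed set $A\sse[n]$ and $k\in[n]'$,
\[ \pi_A(k) > \pi_A(k-1) \iff k\in A. \]
This follows directly from the formula $\pi_A(i)=\max\{j\st\sg_A(j)\leq i\}$: the maximum increases as we pass from $k-1$ to $k$ precisely when $k$ itself lies in $A$. Applying this with $A=A_i\cup\{0\}$ and using $k\geq 1$ gives $\zt_i(k)>\zt_i(k-1)\iff k\in A_i$.

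Putting these together, $\zt$ is injective iff for every $k\in[n]'$ there exists some $i$ with $k\in A_i$, i.e.\ $\bigcup_i A_i=[n]'$. Finally, since $\sum_i|A_i|=\sum_i n_i=n=|[n]'|$, the union $\bigcup_i A_i=[n]'$ is forced to be disjoint. This gives the stated equivalence. There is no real obstacle; the only subtlety is getting the indexing right when passing between $A_i$ and $A_i\cup\{0\}$, which is the reason for restricting to subsets of $[n]'$ rather than $[n]$.
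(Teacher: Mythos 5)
Your proof is correct and follows essentially the same route as the paper's: both identify $A_i=\{k\in[n]'\st\zt_i(k)>\zt_i(k-1)\}$ from the definition of $\pi_{A_i\cup\{0\}}$, translate injectivity of the nondecreasing map $\zt$ into the condition $\bigcup_iA_i=[n]'$, and conclude disjointness by counting. Your version just spells out a couple of steps (automatic surjectivity of $\pi_A$, and the reduction of injectivity to consecutive pairs) that the paper leaves implicit.
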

\begin{proof}
 From the definition of $\pi_{A_i\cup\{0\}}$ we see that
 $A_i=\{s\in[n]'\st\zt_i(s)>\zt_i(s-1)\}$, so that
 $\bigcup_iA_i=\{s\in[n]'\st\zt(s)\neq\zt(s-1)\}$.  Thus $un{\zt}$ is
 a shuffle iff $\zt$ is injective iff $\bigcup_iA_i=[n]'$, and if this
 happens then the union is automatically disjoint by counting.
\end{proof}
\begin{corollary}
 We have $|\Sg(\un{n})|=n!/\prod_in_i!$.  In particular,
 $|\Sg(n,m)|=(n+m)!/n!m!$. \qed
\end{corollary}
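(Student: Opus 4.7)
The plan is to reduce the counting of shuffles to the counting of ordered set partitions, via the bijection already established in Lemma~\ref{lem-shuffle-coprod}. That lemma gives, for each $\un{n}$-shuffle $(\zt_1,\dotsc,\zt_r)$, a decomposition $[n]'=\coprod_iA_i$ with $|A_i|=n_i$, and conversely every such ordered disjoint decomposition arises from a unique shuffle. So the first step is simply to invoke this bijection, reducing the claim to
\[ \#\{(A_1,\dotsc,A_r)\st A_i\sse[n]',\; |A_i|=n_i,\; [n]'=\coprod_iA_i\}
    = n!/\prod_in_i!. \]

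Next I would count these ordered decompositions by the standard multinomial argument: there are $\binom{n}{n_1}$ choices for $A_1$, then $\binom{n-n_1}{n_2}$ choices for $A_2$ from the remaining elements, and so on. The product telescopes to
\[ \binom{n}{n_1}\binom{n-n_1}{n_2}\dotsb\binom{n_r}{n_r}
    = \frac{n!}{n_1!\,n_2!\dotsb n_r!}, \]
as required. The special case $r=2$ then gives $|\Sg(n,m)|=(n+m)!/(n!\,m!)$ immediately.

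There is essentially no obstacle here: the content of the corollary is entirely captured by Lemma~\ref{lem-shuffle-coprod}, and what remains is a one-line multinomial count. The only thing worth a brief remark, if desired, is that the set of admissible $(A_1,\dotsc,A_r)$ is nonempty precisely when $\sum_in_i=n$, which is part of the standing hypothesis on $\un{n}$, so the multinomial formula applies without further caveats.
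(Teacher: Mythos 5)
Your proposal is correct and matches the paper's intended argument: the corollary is stated with no written proof precisely because it follows immediately from the bijection of Lemma~\ref{lem-shuffle-coprod} between $\un{n}$-shuffles and ordered decompositions $[n]'=\coprod_iA_i$ with $|A_i|=n_i$, together with the standard multinomial count that you carry out.
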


\begin{lemma}\label{lem-operad}
 There are natural bijections
 \[ \Sg(m+n,p) \tm \Sg(m,n) \xra{L} \Sg(m,n,p) 
     \xla{R} \Sg(n,m+p) \tm \Sg(n,p)
 \]
 given by $L(\zt,\xi;\phi,\psi)=(\phi\zt,\psi\zt,\xi)$ and 
 $R(\zt,\xi;\phi,\psi)=(\zt,\xi\phi,\xi\psi)$.
\end{lemma}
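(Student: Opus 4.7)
The plan is to reformulate both sides in terms of ordered partitions. By Lemma~\ref{lem-shuffle-coprod}, an $r$-shuffle in $\Sg(n_1,\dotsc,n_r)$ is the same data as an ordered partition $[N]' = A_1 \sqcup \dotsb \sqcup A_r$ (where $N = \sum_i n_i$) with $|A_i| = n_i$, the correspondence being $A_i = \{s \in [N]' \st \zt_i(s) > \zt_i(s-1)\}$. I will prove the lemma by transporting the maps $L$ and $R$ to the partition side, where bijectivity becomes visibly obvious.

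The key combinatorial observation is how composition of surjections interacts with jump sets. For a surjective order-preserving map $\zt : [N] \to [M]$ with jump set $J_\zt \sse [N]'$ of size $M$, the unique order-preserving bijection $\iota_\zt : [M]' \to J_\zt$ satisfies $\zt(\iota_\zt(k)) = k$. A short direct check from the definitions then shows that for any further surjection $\phi : [M] \to [k]$ with jump set $J_\phi \sse [M]'$, the composite $\phi\zt$ has jump set $J_{\phi\zt} = \iota_\zt(J_\phi)$: the jumps of the composite are precisely the positions in $J_\zt$ selected by the jumps of $\phi$.

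Applying this to $L$: an element of $\Sg(m+n, p) \tm \Sg(m, n)$ corresponds on the partition side to an ordered partition $[m+n+p]' = D \sqcup C$ (with $|D|=m+n$, $|C|=p$) together with an ordered partition $[m+n]' = E \sqcup F$ (with $|E|=m$, $|F|=n$). The jump-set formula shows that the triple $(\phi\zt, \psi\zt, \xi)$ has jump sets $(\iota_D(E), \iota_D(F), C)$, which is exactly an ordered partition of $[m+n+p]'$ into parts of sizes $m,n,p$, i.e.\ an element of $\Sg(m, n, p)$. An explicit inverse is now transparent: given $(A, B, C) \in \Sg(m,n,p)$, set $D := A \cup B$, keep $C$, and define $E := \iota_D^{-1}(A)$, $F := \iota_D^{-1}(B)$. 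The map $R$ is handled by the same argument with the outer and inner shuffles interchanged, so the bulk of the work reduces to verifying the single formula $J_{\phi\zt} = \iota_\zt(J_\phi)$; I do not anticipate any serious obstacle beyond that bookkeeping.
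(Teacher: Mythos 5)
Your proposal is correct and takes essentially the same route as the paper: both reduce via Lemma~\ref{lem-shuffle-coprod} to the observation that a three-piece ordered partition of $[m+n+p]'$ is the same as a two-piece partition together with a further splitting of one piece. Your jump-set composition formula $J_{\phi\zt}=\iota_\zt(J_\phi)$ (which is just Lemma~\ref{lem-dag}, $(\phi\zt)^\dag=\zt^\dag\phi^\dag$, applied to $J_\zt=\zt^\dag([m+n]')$) supplies exactly the verification that $L$ agrees with the partition-level bijection, a check the paper explicitly leaves to the reader.
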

\begin{proof}
 We will only discuss $L$; the case of $R$ is similar.

 Suppose that $(\zt,\xi)\in\Sg(m+n,p)$ and $(\phi,\psi)\in\Sg(m,n)$.
 Then $\zt$, $\xi$, $\phi$ and $\psi$ are all surjective, so the same
 is true of $\phi\zt$ and $\psi\zt$.  The maps
 $(\phi,\psi)\tm 1\:[m+n]\tm[p]\to[m]\tm[n]\tm[p]$ and
 $(\zt,\xi)\:[m+n+p]\to[m+n]\tm[p]$ are injective, so the same is true
 of their composite, so $L(\zt,\xi;\phi,\psi)\in\Sg(m,n,p)$.  Next,
 observe that to give a three-piece splitting
 $[m+n+p]'=A\amalg B\amalg C$ (with $|A|=m$ and $|B|=n$ and $|C|=p$)
 is the same as to give a splitting $[m+n+p]'=U\amalg C$ (with
 $|U|=m+n$ and $|C|=p$) together with a splitting $U=A\amalg B$ (with
 $|A|=m$ and $|B|=n$).  Using this together with the correspondence
 $T\leftrightarrow\pi_T$ we obtain a bijection
 $L'\:\Sg(m+n,p)\tm\Sg(m,n)\to\Sg(m,n,p)$.  We leave it to the reader
 to check that $L=L'$.
\end{proof}

\section{Integration over simplices}
\label{apx-int}

Recall that the map $\int_I\:\tP_I\to\K$ is defined by 
\[ \int_I t^\nu = 
     \left(\prod_i \nu_i!\right)/(n+\sum_i\nu_i)!,
\]
(where $n=|I|-1$) or equivalently $\int_I t^{[\nu]}=1/(n+|\nu|)!$.

\begin{lemma}\label{lem-int-well-defined}
 The map $\int_I\:\tP_I\to\K$ factors through $P_I$.
\end{lemma}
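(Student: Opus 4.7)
The plan is to show directly that the ideal $(1-\sum_i t_i)\tP_I$ lies in the kernel of $\int_I$. By $\K$-linearity, it suffices to check that for every monomial $t^{[\nu]}$ we have
\[ \int_I t^{[\nu]} \;=\; \sum_{i\in I}\int_I t_i\,t^{[\nu]}, \]
which is exactly the assertion that $(1-\sum_i t_i)\cdot t^{[\nu]}$ integrates to zero.

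First I would record the elementary identity $t_i\cdot t^{[\nu]}=(\nu_i+1)\,t^{[\nu+\dl_i]}$ (where $\dl_i$ is the Kronecker function supported at $i$). This follows directly from the definition $t^{[\nu]}=t^\nu/\nu!$, together with the observation that passing from $\nu!$ to $(\nu+\dl_i)!$ multiplies only the $i$-th factorial by $\nu_i+1$.

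Plugging into the definition of $\int_I$ with $n=|I|-1$ gives $\int_I t_i\,t^{[\nu]}=(\nu_i+1)/(n+|\nu|+1)!$. Summing over $i\in I$ and using $\sum_i(\nu_i+1)=|\nu|+(n+1)$, the numerator becomes $n+|\nu|+1$, which cancels one factor from the denominator to leave $1/(n+|\nu|)!=\int_I t^{[\nu]}$, as required.

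There is no real obstacle here; the only thing to be mindful of is the bookkeeping in passing between the normalised monomials $t^{[\nu]}$ and the raw monomials $t^\nu$, and the fact that the cardinality $n+1=|I|$ appears as the number of summands in $\sum_i(\nu_i+1)$, providing precisely the shift needed to match $(n+|\nu|+1)!$ with $(n+|\nu|)!$. Once this identity is in place the lemma follows immediately by linearity.
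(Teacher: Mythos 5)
Your proof is correct and is essentially identical to the paper's own argument: both reduce to the identity $\int_I t^{[\nu]}=\sum_i\int_I t_i t^{[\nu]}$ via the relation $t_i t^{[\nu]}=(\nu_i+1)t^{[\nu+\dl_i]}$ and the count $\sum_i(\nu_i+1)=n+1+|\nu|$. Nothing to add.
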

\begin{proof}
 We must show that $\int_I$ annihilates the ideal generated by
 $1-\sum_it_i$, or equivalently that
 $\int_It^{[\nu]}=\sum_i\int_It_it^{[\nu]}$.  We have
 $t_it^{[\nu]}=(1+\nu_i)t^{[\dl_i+\nu]}$, where $\dl_i\:I\to\N$ is the
 Kronecker delta, so $|\dl_i+\nu|=1+|\nu|$.  We thus have
 \begin{align*}
  \sum_i\int_It_it^{[\nu]}
   &= \sum_i (1+\nu_i) \int_I t^{[\nu+\dl_i]} 
    = \frac{1}{(n+1+|\nu|)!}\sum_i (1+\nu_i) \\
   &= \frac{n+1+|\nu|}{(n+1+|\nu|)!} = \frac{1}{(n+|\nu|)!} 
    = \int_I t^{[\nu]}
 \end{align*}
 as required.
\end{proof}

\begin{lemma}\label{lem-int-real}
 If $\K=\R$ then $\int_I f$ is just the integral of $f$ over the
 simplex $\Dl_I=\{x\:I\to\R_+\st \sum_ix_i=1\}$, with respect to the
 usual Lebesgue measure normalised so that $\mu(\Dl_I)=1/(|I|-1)!$.
\end{lemma}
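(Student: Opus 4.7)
The plan is to reduce the identity to monomials $f=t^\nu$ by linearity and then identify the geometric side with the classical Dirichlet integral, which I would evaluate by induction via Fubini and the Beta function.

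First, both sides of the claimed identity depend linearly on $f$, so it suffices to treat monomials $f=t^\nu$. We may identify $I$ with $[n]$ (where $|I|=n+1$) and parametrize $\Dl_{[n]}$ by the coordinates $(t_1,\dotsc,t_n)$ on the region
\[ R_n = \{(t_1,\dotsc,t_n)\in\R^n \st t_i\geq 0,\; \textstyle\sum_{i=1}^n t_i\leq 1\}, \]
with $t_0=1-\sum_{i\geq 1}t_i$.  The ``natural measure'' on $\Dl_{[n]}$ is by definition Lebesgue measure in these coordinates; the identity $\mathrm{vol}(R_n)=1/n!$ follows from an easy induction (peeling off $t_n$), confirming the required normalization $\mu(\Dl_{[n]})=1/n!$.

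Under this parametrization the geometric integral of $t^\nu$ becomes the classical Dirichlet integral
\[ D_n(\nu) := \int_{R_n}(1-t_1-\dotsb-t_n)^{\nu_0}\,t_1^{\nu_1}\dotsb t_n^{\nu_n}\,dt_1\dotsb dt_n, \]
and the task reduces to showing $D_n(\nu)=\nu_0!\nu_1!\dotsb\nu_n!/(n+|\nu|)!$. I would prove this by induction on $n$. The base case $n=1$ is the Beta-function identity $\int_0^1(1-t)^at^b\,dt=a!\,b!/(a+b+1)!$. For the inductive step, apply Fubini to isolate the $t_n$-integration and substitute $t_n=(1-t_1-\dotsb-t_{n-1})s$ with $s\in[0,1]$; the Beta identity collapses the inner integral to
\[ (1-t_1-\dotsb-t_{n-1})^{\nu_0+\nu_n+1}\cdot\frac{\nu_0!\,\nu_n!}{(\nu_0+\nu_n+1)!}. \]
What remains is $D_{n-1}(\nu')$ with $\nu'=(\nu_0+\nu_n+1,\nu_1,\dotsc,\nu_{n-1})$, and the inductive hypothesis together with cancellation of the factor $(\nu_0+\nu_n+1)!$ produces $\nu_0!\nu_1!\dotsb\nu_n!/(n+|\nu|)!$, as required.

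There is no conceptual obstacle; the entire argument is a routine application of Fubini plus the Beta-function identity. The only care needed is bookkeeping the exponents through the substitution and checking that the factorial produced by the innermost Beta integral cancels against the one appearing in the inductive hypothesis for $D_{n-1}(\nu')$.
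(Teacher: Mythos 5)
Your proof is correct and follows essentially the same route as the paper's: reduce to monomials and induct on the dimension via Fubini and the Beta integral. The only cosmetic differences are that the paper restricts to monomials with $\nu_0=0$ (using $P_I=\R[t_1,\dotsc,t_n]$) and rescales all of the earlier coordinates at once via $\phi(t,s)=(st,1-s)$, and that it derives the needed Beta identity from the $n\leq 1$ cases plus linearity rather than quoting it as classical.
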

\begin{proof}
 We may assume that $I=\{0,\dotsc,n\}$ and work by induction on $n$.
 We can identify $\Dl_n$ by projection with
 $\Dl'_I=\{x\in\R^n\st\sum_{i=1}^nx_i\leq 1\}$.  Define  
 \[ \int'_I f = 
   \int_{\Dl'_I} f(1-\sum_{i=1}^n x_i,x_1,x_2,\dotsc,x_n) 
     dx_1 \dotsb dx_n.
 \]
 We will show that $\int_It^{[\nu]}=\int'_It^{[\nu]}$ for any
 multiindex $\nu$ with $\nu_0=0$.  This will suffice because
 $P_I=\R[t_1,\dotsc,t_n]$.  When $n=0$ the claim is trivial.
 When $n=1$, the claim says that $\int_{t=0}^1t^{[n]}=1/(1+n)!$, which
 is also trivial.  This implies that $\int=\int'$ even on polynomials
 that are not in our preferred basis, which gives
 \[ \int_{t=0}^1(1-t)^{[i]}t^{[j]}= 1/{(1+i+j)!}; \]
 this will be useful later.

 For $n>0$ we define a map $\phi\:\Dl'_{n-1}\tm[0,1]\to\Dl'_n$ by
 $\phi(t,s)=(st,1-s)$.  This is bijective away from a set of measure
 zero, and  the Jacobian is $s^{n-1}$.  Given a multiindex
 $\nu=(0,\nu_1,\dotsc,\nu_n)$, write $\nu'$ for the truncated sequence
 $(0,\nu_1,\dotsc,\nu_{n-1})$.  We then have 
 \[ \phi(t,s)^{[\nu]} = (1-s)^{[\nu_n]} (ts)^{[\nu']} 
     = (1-s)^{[\nu_n]}s^{|\nu'|} t^{[\nu']}.
 \]
 We may assume inductively that 
 \[ \int'_{[n-1]}t^{[\nu']} = \frac{1}{(n-1+|\nu'|)!}, \]
 so 
 \begin{align*}
  \int'_{[n]} t^{[\nu]} 
   &= \int_{s=0}^1\int'_{[n-1]}\phi(x,s)^{[\nu]} s^{n-1} ds \\
   &= \int_{s=0}^1 (1-s)^{[\nu_n]}s^{n-1+|\nu'|} ds 
       \int'_{[n-1]} x^{[\nu']} \\
   &= \int_{s=0}^1 (1-s)^{[\nu_n]} s^{[n-1+|\nu'|]} ds 
    = \frac{1}{(1+\nu_n+n-1+|\nu'|)!} = \frac{1}{(n+|\nu|)!},
 \end{align*}
 as required.

 Now $\int'_If$ is certainly the integral of $f$ over $\Dl_I$ with
 respect to some normalisation of Lebesgue measure.  To determine the
 normalisation, note that $\int'_I1=\int_It^{[0]}=1/n!$ as required.
\end{proof}

\begin{lemma}\label{lem-int-s}
 Take $I=[n]=\{0,1,\dotsc,n\}$, use the parameters $s_k=\sum_{j<k}t_j$
 for $k=1,\dotsc,n$.  Consider a monomial
 $s^\nu=\prod_{k=1}^ns_k^{\nu_k}$.
 Put $\mu_k=\sum_{j\leq k}(\nu_j+1)$ and $\mu=\prod_i\mu_i$.  Then
 $\int_{[n]}s^\nu=1/\mu$.
\end{lemma}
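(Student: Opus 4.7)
The plan is to reduce to the case $\K=\R$ using Lemma~\ref{lem-int-real}. Both sides of the claimed identity are specific rational numbers lying in the prime subfield of $\K$, and the map $\int_{[n]}$ is defined on $\tP_{[n]}$ via a formula with rational coefficients, so it suffices to verify the equality when $\K=\R$. In that case, $\int_{[n]}f$ is the integral of $f$ over $\Dl_{[n]}$ with respect to Lebesgue measure normalised so that $\mu(\Dl_{[n]})=1/n!$.

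Next, I would change coordinates from $(t_1,\dotsc,t_n)$ to $(s_1,\dotsc,s_n)$. Since $t_i=s_{i+1}-s_i$ (with the conventions $s_0=0$ and $s_{n+1}=1$), this substitution has unit Jacobian, and it maps $\Dl_{[n]}$ bijectively (up to a measure-zero set) to the region
\[
R_n=\{(s_1,\dotsc,s_n)\in\R^n \st 0\le s_1\le s_2\le\dotsb\le s_n\le 1\},
\]
whose Lebesgue volume equals $1/n!$ as required. Hence $\int_{[n]}f=\int_{R_n} f\,ds_1\dotsb ds_n$ for any polynomial $f$.

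It then remains to compute the iterated integral of $s_1^{\nu_1}\dotsb s_n^{\nu_n}$ over $R_n$, peeling off the integrations from $s_1$ outward. Integrating $s_1$ from $0$ to $s_2$ yields $s_2^{\nu_1+1}/(\nu_1+1)=s_2^{\mu_1}/\mu_1$, absorbing the extra power of $s_2$ into the exponent of the next integration. An obvious induction then shows that after integrating $s_1,\dotsc,s_{k-1}$ we have accumulated the factor $1/(\mu_1\mu_2\dotsb\mu_{k-1})$ and are left with the integrand $s_k^{\mu_{k-1}+\nu_k}s_{k+1}^{\nu_{k+1}}\dotsb s_n^{\nu_n}$; using the recursion $\mu_k=\mu_{k-1}+\nu_k+1$, the next step of integration contributes exactly $s_{k+1}^{\mu_k}/\mu_k$. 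The final integration $\int_0^1 s_n^{\mu_{n-1}+\nu_n}\,ds_n=1/\mu_n$ closes up the product, giving $\int_{[n]}s^\nu=1/(\mu_1\dotsb\mu_n)=1/\mu$.

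The only mildly delicate point is the bookkeeping for the change of variables and the matching of the Lebesgue volume of $R_n$ with the normalisation from Lemma~\ref{lem-int-real}; after that, the computation is a routine telescoping iterated integral, and there is no real obstacle.
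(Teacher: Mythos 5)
Your proposal is correct and follows essentially the same route as the paper: reduce to $\K=\R$ via Lemma~\ref{lem-int-real}, pass by the unit-Jacobian change of variables to the iterated integral over $\{0\le s_1\le\dotsb\le s_n\le 1\}$, and telescope using $\mu_k=\mu_{k-1}+\nu_k+1$. The paper merely packages the telescoping as an induction on $n$ (adjoining the outermost variable via a scaling substitution) rather than integrating from $s_1$ outward, which is an inessential difference.
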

\begin{proof}
 It will suffice to prove this when $\K=\R$, in which case we have
 $\int_Is^\nu=\int'_Is^\nu$.  By a straightforward change of variables
 this becomes
 \[ \int_Is^\nu =
     \int_{0\leq s_1\leq\dotsb\leq s_n\leq 1} s^\nu ds_1\dotsb ds_n.
 \]
 Suppose that the lemma holds for some $n$.  Using the change of
 variables $s_i\mapsto rs_i$ (which has Jacobian $r^n$) we see that
 \[ \int_{0\leq s_1\leq\dotsb\leq s_n\leq r} s^\nu ds_1\dotsb ds_n
     = r^{n+\sum_i\nu_i} 
       \int_{0\leq s_1\leq\dotsb\leq s_n\leq r} s^\nu ds_1\dotsb ds_n
     = r^{\mu_n}/\mu.
 \]
 Now multiply by $r^m$ and integrate from $r=0$ to $r=1$; the right
 hand side becomes $1/((m+1+\mu_n)\mu)$.  Now change notation,
 replacing $r$ by $s_{n+1}$ and $m$ by $\nu_{n+1}$; this gives the
 case $n+1$ of the lemma.
\end{proof}

\begin{lemma}\label{lem-int-prod}
 Suppose that $f\in P_{[n]}$ and $g\in P_{[m]}$.  Then 
 \[ \int_{[n]}f \cdot \int_{[m]} g = 
     \sum_{(\al,\bt\in\Sg(n,m))} \int_{[n+m]} \al^*(f)\bt^*(g).
 \]
 (Here $\Sg(n,m)$ is the set of $(n,m)$-shuffles, as in
 Definition~\ref{defn-shuffle}.) 
\end{lemma}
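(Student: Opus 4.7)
The plan is to reduce to $\K=\R$ and then interpret both sides as honest Lebesgue integrals. Both sides of the claimed identity are $\K$-bilinear in $(f,g)$, and evaluating on any pair of monomials produces an equality of specific rational numbers, so it suffices to verify the identity when $\K=\R$. In that case I will use Lemma~\ref{lem-int-real} together with the $s$-coordinates of Definition~\ref{defn-s-w}: since $P_{[n]}=\K[s_1,\ldots,s_n]$, the substitution $t_i=s_{i+1}-s_i$ (with unit Jacobian, as in the proof of Lemma~\ref{lem-int-s}) gives
\[ \int_{[n]} f = \int_{0\leq s_1\leq\cdots\leq s_n\leq 1} f\,ds_1\cdots ds_n, \]
and similarly for $[m]$ and $[n+m]$; I will write the corresponding variables as $s'_j$ and $\sigma_k$.

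For a shuffle $(\al,\bt)\in\Sg(n,m)$, Lemma~\ref{lem-star-s} gives $\al^*(s_i)=\sigma_{\al^\dag(i)}$ and $\bt^*(s'_j)=\sigma_{\bt^\dag(j)}$, so the $(\al,\bt)$-summand on the right becomes
\[ \int_{0\leq\sigma_1\leq\cdots\leq\sigma_{n+m}\leq 1}
    f\bigl(\sigma_{\al^\dag(1)},\ldots,\sigma_{\al^\dag(n)}\bigr)\,
    g\bigl(\sigma_{\bt^\dag(1)},\ldots,\sigma_{\bt^\dag(m)}\bigr)\,d\sigma. \]
In this integral I change variables by $u_i=\sigma_{\al^\dag(i)}$ and $v_j=\sigma_{\bt^\dag(j)}$. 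By Lemma~\ref{lem-shuffle-coprod} the maps $\al^\dag$ and $\bt^\dag$ have disjoint images whose union is $[n+m]'$, so this is a bijective linear change of coordinates on $\R^{n+m}$ with Jacobian $\pm 1$. The $u$'s and $v$'s are each automatically increasing (because $\al^\dag$ and $\bt^\dag$ are), and the image of the $\sigma$-region is the sub-polytope of $\{0\leq u_1\leq\cdots\leq u_n\leq 1\}\times\{0\leq v_1\leq\cdots\leq v_m\leq 1\}$ on which interleaving the $u$'s and $v$'s according to the pattern specified by $(\al,\bt)$ yields an increasing sequence.

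The final step is the combinatorial observation that for almost every $(u,v)$ with $u$ and $v$ strictly increasing in $[0,1]$, exactly one shuffle $(\al,\bt)\in\Sg(n,m)$ realises the merge-sort interleaving of $u$ and $v$. Hence the shuffle-indexed subregions tile $\{u,v\text{ increasing}\}$ up to a set of measure zero, and summing gives
\[ \sum_{(\al,\bt)}\int_{[n+m]}\al^*(f)\bt^*(g)
    =\int_{\substack{0\leq u_1\leq\cdots\leq u_n\leq 1\\0\leq v_1\leq\cdots\leq v_m\leq 1}}
      f(u)g(v)\,du\,dv
    =\int_{[n]}f\cdot\int_{[m]}g. \]
No serious obstacle is anticipated; the only slightly delicate step is the tiling claim, which is the classical prismatic triangulation of $\Delta_n\tm\Delta_m$ by shuffle simplices and is immediate from the characterisation of shuffles in Lemma~\ref{lem-shuffle-coprod}.
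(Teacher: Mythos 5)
Your proposal is correct and follows essentially the same route as the paper: pass to the increasing $s$-coordinates, observe that each shuffle gives a unit-Jacobian simplex inside $\Dl''_n\tm\Dl''_m$, and invoke the standard shuffle (prismatic) triangulation of the product of simplices to convert the sum over shuffles into the product of the two integrals. The explicit reduction to $\K=\R$ at the start is a harmless addition that the paper leaves implicit.
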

\begin{proof}
 Put $\Dl''_n=\{s\in\R^n\st 0\leq s_1\leq\dotsb\leq s_n\leq 1\}$.  As
 implicitly used in the proof of the previous lemma, there is a
 homeomorphism $\Dl_{[n]}\to\Dl''_n$ given by 
 \[ t \mapsto (t_0,t_0+t_1,\dotsc,\sum_{i<n}t_i). \]
 Now consider a shuffle $(\al,\bt)\in\Sg(n,m)$, and the corresponding
 maps 
 \[ \{1,\dotsc,n\} \xra{\phi} \{1,\dotsc,n+m\} 
     \xla{\psi} \{1,\dotsc,m\}
 \]
 given by $\phi(j)=\min\{i\st\al(i)=j\}$ and
 $\psi(k)=\min\{i\st\bt(i)=k\}$.  These give a map
 $(\al_*,\bt_*)\:\Dl''_{n+m}\to\Dl''_n\tm\Dl''_m$, with
 $\al_*(s)_i=s_{\phi(i)}$ and $\bt_*(s)_j=s_{\psi(j)}$.  Let
 $X_{\al\bt}$ be the image of this map.  It is standard 
 that these are the top-dimensional simplices in a triangulation of
 $\Dl''_n\tm\Dl''_m$, so 
 \[ \int_{[n]}f \cdot \int_{[m]} g = 
     \sum_{\al,\bt} \int_{X_{\al\bt}} f\ot g. 
 \]
 Moreover, from the form of the maps $\al_*$ and $\bt_*$ it is clear
 that the Jacobian of $(\al_*,\bt_*)\:\Dl''_{n+m}\to\Dl''_n\tm\Dl''_m$
 is one.  The lemma follows.
\end{proof}

\begin{bibdiv}
\begin{biblist}
\bib{eizi:ssc}{article}{
  author={Eilenberg, Samuel},
  author={Zilber, J. A.},
  title={Semi-simplicial complexes and singular homology},
  journal={Ann. of Math. (2)},
  volume={51},
  date={1950},
  pages={499--513},
  issn={0003-486X},
  review={\MR {0035434 (11,734e)}},
}

\bib{frpi:cst}{book}{
  author={Fritsch, Rudolf},
  author={Piccinini, Renzo~A.},
  title={Cellular structures in topology},
  series={Cambridge studies in advanced mathematics},
  publisher={Cambridge University Press},
  date={1990},
  volume={19},
}

\bib{gazi:cfh}{book}{
  author={Gabriel, P.},
  author={Zisman, M.},
  title={Calculus of fractions and homotopy theory},
  publisher={Springer--Verlag},
  address={New York},
  date={1967},
}

\bib{ma:soa}{book}{
  author={May, J.~Peter},
  title={Simplicial objects in algebraic topology},
  series={Van Nostrand Mathematical Studies},
  publisher={Van Nostrand Reinhold},
  date={1967},
  volume={11},
}

\bib{qu:rht}{article}{
  author={Quillen, Daniel~G.},
  title={Rational homotopy theory},
  date={1969},
  journal={Annals of Mathematics},
  volume={90},
  pages={205\ndash 295},
}

\bib{su:ict}{article}{
  author={Sullivan, Dennis},
  title={Infinitesimal computations in topology},
  journal={Inst. Hautes \'Etudes Sci. Publ. Math.},
  number={47},
  date={1977},
  pages={269--331 (1978)},
  issn={0073-8301},
  review={\MR {0646078 (58 \#31119)}},
}


\end{biblist}
\end{bibdiv}

\end{document}